\numberwithin{equation}{section}
\newtheorem{Theorem}{Theorem}[section]
\newtheorem*{Theorem*}{Theorem}
\newtheorem{Corollary}[Theorem]{Corollary}
\newtheorem{Lemma}[Theorem]{Lemma}
\newtheorem{Proposition}[Theorem]{Proposition}
 { \theoremstyle{definition}
\newtheorem{Definition}[Theorem]{Definition}

\newtheorem{Remark}[Theorem]{Remark} }
\begin{document}
\allowdisplaybreaks

\newcommand{\arXivNumber}{2106.03421}

\renewcommand{\PaperNumber}{014}

\FirstPageHeading

\ShortArticleName{$q$-Selberg Integrals and Koornwinder Polynomials}

\ArticleName{$\boldsymbol{q}$-Selberg Integrals and Koornwinder Polynomials}

\Author{Jyoichi KANEKO}

\AuthorNameForHeading{J.~Kaneko}

\Address{Department of Mathematical Sciences, University of the Ryukyus,\\
 Nishihara, Okinawa 903-0213, Japan}
\Email{\href{mailto:kaneko@math.u-ryukyu.ac.jp}{kaneko@math.u-ryukyu.ac.jp}}

\ArticleDates{Received June 23, 2021, in final form February 14, 2022; Published online February 28, 2022}

\Abstract{We prove a generalization of the $q$-Selberg integral evaluation formula. The integrand is that of $q$-Selberg integral multiplied by a factor of the same form with respect to part of the variables. The proof relies on the quadratic norm formula of Koornwinder polynomials. We also derive generalizations of Mehta's integral formula as limit cases of our integral.}

\Keywords{Koornwinder polynomials; quadratic norm formula; antisymmetrization; $q$-Sel\-berg integral; Mehta's integral}

\Classification{33D52; 05A30; 11B65}\vspace{-1mm}

\section{Introduction}
 The Selberg integral is one of the most important multiple integrals which admits explicit evaluation \cite{Se},
 and once rediscovered around 1980 there has been followed a flood of studies on it and its various
 generalizations, see \cite{FW} and references therein. In particular, Askey \cite{As} proposed a $q$-analogue of
 the Selberg integral with its conjectural explicit evaluation, see~\eqref{eq1.1} below, and this conjecture was
 proved by Habsieger \cite{Hab} and Kadell \cite{Kad} in entirely different ways.
 Aomoto~\cite{Ao} proposed and evaluated $q$-Selberg type integrals attached to any reduced root systems.
Let us mention some recent relevant papers: other proofs of the Habsieger--Kadell's formula \cite{GLXZ,Kar,XZ}
and variations and extensions \cite{IF, W1,W2}. To state the formula we require some definitions.
We~assume that $0<q<1$ and let\vspace{-1mm}
\begin{equation*}
 ( x;q)_{\infty}=(x)_{\infty}=\prod_{i=0}^{\infty} \big(1-xq^{i}\big), \qquad (x;q)_a=(x)_{a}=\frac{(x;q)_{\infty}}{(x;q^{a})_{\infty}}, \qquad
 [x]_q =\frac{1-q^x}{1-q},\vspace{-1mm}
 \end{equation*}
 and the $q$-gamma function for $x\neq 0, -1,\dots $\vspace{-1mm}
 \begin{gather*}
 \Gamma_q(x)=\frac{(q)_{\infty}}{(q^x)_{\infty}} (1-q)^{1-x}.\vspace{-1mm}
\end{gather*}
We shall write $(x_1,\dots ,x_m;q)_{\infty}$ for $\prod_{i=1}^{m}(x_i;q)_{\infty}$.
The $q$-integral of a function $f(t_1,\dots,t_n)$ on~$[0,1]^n$ is defined by\vspace{-1mm}
\begin{gather*}
 \int_{[0,1]^n} f(t_1,\dots,t_n) {\rm d}_qt_1 \cdots {\rm d}_qt_n =(1-q)^n \sum_{j_1=0,\dots,j_n=0}^{\infty}
 f\big(q^{j_1},\dots,q^{j_n}\big)q^{j_1+\cdots +j_n}.\vspace{-1mm}
 \end{gather*}
 Then the Habsieger--Kadell's integral formula is
 \begin{gather}
\int_{[0,1]^n}
 \prod_{i=1}^n t_i^{\alpha-1}\frac{(qt_i)_{\infty}}{(q^{\beta}t_i)_{\infty}}
 \prod_{1\le i<j\le n}
t_i^{2k}\bigg(q^{1-k}\frac{t_j}{t_i}\bigg)_{2k}\, {\rm d}_qt_1\cdots {\rm d}_qt_n\nonumber
 \\ \qquad
 {} =q^{\alpha k\binom{n}{2}+2k^2\binom{n}{3}}\prod_{i=1}^{n}\frac{\Gamma_q(1+ik)\Gamma_q(\alpha +(i-1)k)
 \Gamma_q(\beta+(i-1)k)} {\Gamma_q(1+k)\Gamma_q(\alpha+\beta+(n+i-2)k)},
 \label{eq1.1}
 \end{gather}
 for $\operatorname{Re}\alpha, \operatorname{Re} \beta >0$, $k\in \mathbb{Z}_{\ge 0}$. In the present paper
 we prove a generalization of this $q$-Selberg integral formula. Namely we evaluate a $q$-integral which
 has the integrand of the $q$-Selberg integral multiplied by a factor of the same form with respect to
 part of the variables:
\begin{gather*} 
\int_{[0,1]^n} \prod_{i=n_0+1}^n t_i\big(1-q^{\beta}t_i\big) \prod_{n_0+1\le i<j\le n}\big(t_i-q^{-k}t_j\big)\big(t_i-q^{k+1}t_j\big)
 \prod_{i=1}^n t_i^{\alpha-1}\frac{(qt_i)_{\infty}}{\big(q^{\beta}t_i\big)_{\infty}}
 \\ \qquad
{}\times \prod_{1\le i<j\le n}
t_i^{2k}\bigg(q^{1-k}\frac{t_j}{t_i}\bigg)_{2k}\, {\rm d}_qt_1\cdots {\rm d}_qt_n,
 \end{gather*}
or a factor similar to this,
see~\eqref{eq7.2} and~\eqref{eq7.4} in Theorem~\ref{theorem7.3} for the precise forms of the evaluation
in terms of $q$-gamma function. This formula gives, by taking a limit, the evaluation of certain generalizations
of the so-called Mehta integral (see Corollaries~\ref{corollary7.7} and~\ref{corollary7.11}).

The motivation to study this type of integral stems from the Baker--Forrester constant term
(ex-)conjecture \cite{BF}: Suppose $a,b,k\in \mathbb Z_{\ge 0}$ and let
\begin{gather*}
M_n(a,b,k;q)=\prod_{l=0}^{n-1}
\frac{\Gamma_q(a+b+1+kl)\Gamma_q(1+k(l+1))}{\Gamma_q(a+1+kl)\Gamma_q(b+1+kl)\Gamma_q(1+k)}.
\end{gather*}
Then
\begin{gather}
\mbox{CT}_{\{t\}} \prod_{n_0+1\le i<j\le n} \bigg(1-q^k \frac{t_i}{t_j} \bigg) \bigg(1-q^{k+1} \frac{t_j}{t_i} \bigg)
\prod_{i=1}^n (t_i)_a \bigg(\frac{q}{t_i} \bigg)_b
\prod_{1\le i< j\le n} \bigg( q\frac{t_j}{t_i} \bigg)_k \bigg( \frac{t_i}{t_j} \bigg)_k \nonumber
\\ \qquad
{}=M_{n_0}(a,b,k;q) \nonumber
\\ \qquad\hphantom{=}
{}\times\prod_{j=0}^{n_1-1}\frac{[(k\!+1)(j\!+1)]_q\Gamma_q((k\!+1)j\!+a\!+b\!+kn_0\!+1) \Gamma_q((k\!+1)(j\!+1)\!+kn_0)}
{ \Gamma_q(2+k)\Gamma_q((k+1)j+a+kn_0+1)\Gamma_q((k+1)j+b+kn_0+1)},
\label{eq1.3}
\end{gather}
 where $ n=n_0+n_1$ and $ \mbox{CT}_{\{t\}}$ denotes the constant term of the Laurent polynomial in
 $t=(t_1,\dots,t_n)$. When $n_0=0$ this is nothing but the $q$-Morris constant term identity~\cite{Mo} and
 is equivalent to the $q$-Selberg integral formula~\eqref{eq1.1}. Similarly the Baker--Forrester constant term
identity~\eqref{eq1.3} is equivalent to an evaluation of the $q$-integral
\begin{gather}
\int_{[0,1]^n} \prod_{i=1}^{n_0} t_i^{n_1-1} \prod_{n_0+1\le i<j\le n}\big(t_i - q^{-k}t_j\big)\big(t_ i- q^{k+1}t_j\big)
 \prod_{i=1}^n t_i^{\alpha-1}\frac{(qt_i)_{\infty}}{\big(q^{\beta}t_i\big)_{\infty}}\nonumber
 \\ \qquad
 {}\times \prod_{1\le i<j\le n}
t_i^{2k}\bigg(q^{1-k}\frac{t_j}{t_i}\bigg)_{2k}\, {\rm d}_qt_1\cdots {\rm d}_qt_n.\label{eq1.4}
 \end{gather}
 In \cite{Kan2} we sought to evaluate~\eqref{eq1.4} using an integration formula for Macdonald
polynomials \cite{Kan1} but only succeeded for some special cases
 (see also \cite{B,GLXZ,Ham} for partial results). Then we turned to the quadratic norm formula of
 Koornwinder polynomials and the results obtained are evaluations of similar but different integrals,
 see~\eqref{eq7.2} and~\eqref{eq7.4} in Theorem~\ref{theorem7.3}. The constant term conjecture~\eqref{eq1.3}
 has recently been proved by K\'{a}rolyi et al.~\cite{Kar} employing the combinatorial Nullstellensatz.
This establishes the evaluation formula of the $q$-integral~\eqref{eq1.4}, see~\eqref{eq7.6} in~Theo\-rem~\ref{theorem7.4} for the explicit form. Presently it is not clear that the conjecture can be
 proved by suitably adapting the method of this paper.

As stated above, the proof of Theorem~\ref{theorem7.3} relies on the quadratic norm formula of Koornwinder
polynomial \cite {Ma2, St2} (see~\eqref{eq3.4}, \eqref{eq3.5} for explicit form). Koornwinder polynomials are
 a~family of orthogonal Laurent polynomials in several
variables, introduced by Koornwinder~\cite{Ko} in the symmetric case, that generalize the Askey--Wilson
polynomials. We apply the quadratic norm formula for specific Koornwinder polynomial $E_{\rho_1}$ (Proposition~\ref{proposition3.1})
from which the quad\-ra\-tic norm of partially antisymmetrization $U_1^{-} E_{\rho_1}$ is derived, see
Theorem~\ref{theorem4.6}. Then we take a~sca\-ling limit of each side of the quadratic norm formula.
The limits are given in Theorems~\ref{theorem5.4} and~\ref{theorem6.2}, and equating theses limits yields Theorem~\ref{theorem7.3}. The procedure of taking a scaling limit of the quadratic norm formula was previously
considered by Stokman in \cite{St1}, where he treated the case of symmetric Koornwinder polynomials.
Our calculation might be considered as a specific but nonsymmetric case of his procedure, and is carried out
in a similar way, see Remark~\ref{remark6.3}.

 Our proof needs somewhat involved limit calculations and it would be interesting to see whether or not the method
 of K\'{a}rolyi et al.\ would give a combinatorial proof of our formula formulated as a constant term identity.

 The paper is organized as follows. After preparing necessary results on Koornwinder polynomials in Section~\ref{section2},
 we define a bilinear form and give the explicit formula of the quadratic norm of Koornwinder polynomials in Section~\ref{section3}.
 In Section~\ref{section4} we define partial antisymmetrization and calculate the quadratic norm of antisymmetrization of
 a specific Koornwinder polynomial. Then in Sections~\ref{section5} and~\ref{section6} we calculate a suitable scaling limit of both sides
 of the quadratic norm formula. Finally in Section~\ref{section7}, we rewrite the resulting limit formula in terms of a $q$-integral.
This gives generalizations of the $q$-Selberg integral formula, see Theorem~\ref{theorem7.3}. In particular,
as in the original Selberg case, generalizations of Mehta integral are evaluated as limit cases of our integral, see
 Corollaries~\ref{corollary7.7} and~\ref{corollary7.11}. Furthermore we shall show that
 these results are amenable to be stated uniformly
 in terms of degrees of finite reflection groups of
 classical type and their parabolic subgroups, see Proposition~\ref{proposition7.12}.

\section{Koornwinder polynomials}\label{section2}

In this section we summarize necessary results on Koornwinder polynomials. Our main references are \cite{St2, St3}.

We first recall the affine root system of type $C^{\vee} C_n$. We assume that $n\ge 2$ throughout the paper.
Let $V=(\mathbb R^n, (\,,)) $ be Euclidean $n$-space with orthonormal basis $ \{ \epsilon_n \}_{i=1} ^n $.
We denote $|v|=\sqrt{(v,v)}$ for the norm of $v\in V$.
Let $\widehat V$ be the vector space of affine linear functions from~$ V$ to $\mathbb{R}$. Each element of
$\widehat V$ may be written as
 \begin{equation*}
 f(w)=(v,w)+\lambda\delta(w),\qquad w\in V,
 \end{equation*}
for some $v\in V$ and $\lambda \in \mathbb R$, where $\delta$ is the function identically equal to one on $V$.
We extend the scalar product to a positive semi-definite bilinear form on $\widehat V$ by setting
 \begin{equation*}
 (v+\lambda \delta,w+ \mu \delta)= (v,w).
 \end{equation*}
For $f\in \widehat V \setminus \mathbb R \delta$ define the reflection $s_f$ by
 \begin{equation*}
 s_f(g)= g-\big(g,f^{\vee}\big)f, \qquad g\in \widehat V,
 \end{equation*}
where $ f^{\vee}=2f/|f|^2$ is the coroot of $f$. Note that $s_f(g) =g\circ \tilde{s}_f^{-1}$, where
$\tilde{s}_f\colon V\to V$ is the orthogonal reflection in the affine hyperplane $f^{-1}(0)$. For $v\in V$ we define
 the translation operator $\tau(v)\in \mathrm {GL}(\widehat V)$ by
 \begin{equation*}
 \tau(v)f= f+(f,v)\delta, \qquad f\in \widehat{V}.
 \end{equation*}
 Note that $\tau(v)f=f\circ \tilde{\tau}_v^{-1} $, where $\tilde{\tau}_v\colon V\to V $ is given by $ \tilde{\tau}_v(w)=w-v$.

We consider three kinds of root system. First the irreducible root system of type $C_n$ is defined by the following
finite subset $ \Sigma \subset V$,
\begin{equation*}
 \Sigma =\{\pm2\epsilon_i \mid 1\le i \le n\} \cup \{ \pm\epsilon_i\pm\epsilon_j \mid 1\le i<j \le n\},
 \end{equation*}
 where all the sign combinations occur.
 The Weyl group $W$ of the root system $C_n$, generated by the reflections
 $s_v, v \in \Sigma$, is the semi-direct produt $W=S_n\ltimes (\mathbb{Z}/2\mathbb{Z})^n$ given by permutations
 and sign changes of the fixed basis
 $ \{ \epsilon_n \}_{i=1} ^n $. We shall write $\Sigma =\Sigma_{l} \sqcup \Sigma_{m}$ for the decomposition of
 $\Sigma$ into $W$-orbits, $\Sigma_{l}$
 (resp.\ $\Sigma_{m}$) being the roots of squared length four (resp.\ two).
 Next let $R\subset \widehat{V} $ be the irreducible reduced affine root system of type $\widetilde C_n$:
\begin{equation*}
 R=\Sigma+\mathbb Z \delta.
 \end{equation*}
Then the irreducible non-reduced affine root system of type $C^{\vee} C_n$ is defined by
 \begin{equation*}
S= R\cup R^{\vee},
 \end{equation*}
 where $R^{\vee}$ be the dual root system of $R$. The affine Weyl groups of $R$ and $S$ are the same, generated
 by the reflections $s_f$, $f\in S$, and is denoted by $\mathcal W$.

Let $Q^{\vee}$ be the coroot lattice of $\Sigma$. $Q^{\vee}$ coincides with the weight lattice
$\Lambda$ of $\Sigma$ and
 \begin{equation*}
 Q^{\vee}=\Lambda=\bigoplus_{i=1}^{n}\mathbb Z \epsilon_i.
 \end{equation*}
One can readily show that
 \begin{equation*}
 \mathcal W =W\ltimes \tau\big(Q^{\vee}\big).
\end{equation*}

We fix a base $\{a_i \}_{i=1}^{n} $ of the root system $\Sigma$ by
\begin{equation*}
 a_i=\epsilon_i-\epsilon_{i+1}, \qquad i=1,\dots,n-1, \qquad a_n=2\epsilon_n.
 \end{equation*}
 Define
 \begin{equation*}
a_0=\delta-2\epsilon_1.
 \end{equation*}
 Then $\{a_i \}_{i=0}^{n} $ is a base of $R$
and the set $\big\{ a_0^{\vee}=a_0/2,a_ 1,\dots,a_{n-1}, a_n^{\vee}=a_n/2 \big\} $ is a base of $R^{\vee}$
and of $S$ as well. We denote by $\Sigma^+$ (resp.~$\Sigma^{-}$)
the positive (resp.~negative) roots in $\Sigma$ so that we~see
 \begin{gather*}
 \Sigma^+ =\{2\epsilon_i \mid 1\le i \le n\} \cup \{ \epsilon_i\pm\epsilon_j \mid 1\le i<j \le n\},
 \end{gather*}
and by $\Lambda^{+} =\oplus_{i=1}^n\mathbb Z_{\ge 0} \omega_i$
 the corresponding cone of dominant weights
where $\omega_i=\epsilon_1+\cdots+\epsilon_i$ $(i=1,\dots,n)$ are the fundamental weights of $\Lambda$.
We also write $ Q^{\vee,+} $ for the positive span of
the simple coroots $a_i^{\vee}$, $i=1,\dots,n$: $Q^{\vee,+} = \sum_{i=1}^{n}\mathbb Z_{\ge 0} a_i^{\vee}$.

Let $R^{+}$ (resp.~$R^{-}$) be the positive (resp.~negative) roots of $R$ with respect to the base above. Observe that
 \begin{equation*}
 R^{+} =\Sigma^{+} \cup \{f \in R \mid f(0) >0 \}.
 \end{equation*}

The affine Weyl group $\mathcal W$ is generated by $s_i=s_{a_i}$, $i=0,\dots,n $ and is known to be isomorphic to
the Coxeter group with
 generators $s_i$, $(i=0,\dots,n)$ satisfying $s_i^2=1$ and the braid relations
 $ s_is_{i+1}s_is_{i+1}=s_{i+1}s_is_{i+1}s_i$,
 $i=0, n-1$, $s_is_{i+1}s_i=s_{i+1}s_is_{i+1}$, $i=1,\dots,n-2$, $s_is_j= s_js_i$ for~$|i-j|\ge 2$.

 Let $\mathcal A$ be the group algebra of the weight lattice $\Lambda$ with the basis
 $x^{\lambda}$, $\lambda\in \Lambda$, $x^0=1$ the unit element.
 The group algebra $\mathcal A$ is isomorphic to the Laurent polynomials in the $n$ indeterminates
 $x_i=x^{\epsilon_i}$, $i=1,\dots,n$. We fix a generic parameter $q\in \mathbb{C} \setminus \{0\} $
 and let $q^{\frac{1}{2}}$ be a fixed square root of $q$. We define
 \begin{equation*}
 x^{\lambda+c\delta}= q^cx^{\lambda},\qquad
 \lambda \in \Lambda, \qquad
 c\in \dfrac{1}{2} \mathbb Z.
 \end{equation*}
 Then the map $w(x^{\lambda})=x^{w\lambda}$ for $w\in \mathcal W$ and $\lambda \in\Lambda$ extends
 by linearity to an action of~$ \mathcal W$ on~$\mathcal A$. In particular, we have
 \begin{gather*}
(s_0 f)(x)= f\big(qx_1^{-1},x_2,\dots,x_n\big),
\\
(s_i f)(x)= f(x_1,\dots,x_{i-1},x_{i+1},x_i,x_{i+2},\dots,x_n), \qquad i=1,\dots,n-1,
\\
(s_n f)(x)= f(x_1,x_2,\dots,x_{n-1},x_n^{-1}),
 \end{gather*}
where $f\in \mathcal A$ and $x=(x_1,\dots,x_n)$. Note that the elements
$\tau(\lambda)\in \mathcal W \, (\lambda\in \Lambda)$
act as $q$-difference operators:
$\tau(\lambda)(x^{\mu}) =q^{(\lambda,\mu)}x^{\mu}$ for all $\mu \in \Lambda$.

We next construct a five parameter deformation of the above action of $\mathcal W$ on $\mathcal A$.
Let $\mathbf t =(t_\beta)_{\beta \in S}$ be a $\mathcal W$-invariant map from $S$ to $ \mathbb{C} \setminus \{0\} $,
the multiplicity function of $S$. This function is determined by the five values
$t _{a_0^{\vee}}$, $t _{a_0}$, $t _{a_i}$ ($i \ne 0,n$), $t _{a_n}$, $t _{a_n^{\vee}}$ ,
which we shall write $t_0^{\vee }$, $t_0$, $t$, $t_n$, $t_n^{\vee}$ respectively for short. We extend this map to $\widehat V$
by setting $ t_f=1$ for $f\in\widehat V\setminus S$. For convenience,
we write $\mathbf k =(t_{\beta})_{\beta \in R}= (t_0,t,t_n) $
and $\mathbf k^{\vee} =(t_{\beta})_{\beta \in R^{\vee}}= \big(t_0^{\vee},t,t_n^{\vee}\big) $ for the restrictions of
$\mathbf t$ to $R$ and $R^{\vee}$ respectively. We shall need a dual multiplicity function $\tilde{\mathbf t}$
given by
 \begin{equation} \label{eq2.1} 
 \tilde{t}_0=t_n^{\vee},\qquad
 \tilde{t}_0^{\vee} =t_0^{\vee}, \qquad
 \tilde{t}= t, \qquad
 \tilde{t}_n^{\vee}=t_0, \qquad
 \tilde{t}_n=t_n,
 \end{equation}
 which we shall write as $ t_{i}$ \big(resp.~$\tilde{t}_i$\big) for $ t_{a_i}$ \big(resp.~$\tilde{t}_{a_i} $\big), $i=0,\dots,n$.

\begin{Definition} 
 The affine Hecke algebra $H=H(R;\mathbf k)$ of type $\widetilde C _n$ is the unital, associative algebra with generators $T_0,\dots,T_n$ and relations
 \begin{equation*}
 (T_i-t_i)\big(T_i+t_i^{-1}\big)=0, \qquad i=0,\dots,n,
 \end{equation*}
 and the braid relations
 \begin{gather*}
 T_iT_{i+1}T_iT_{i+1}=T_{i+1}T_iT_{i+1}T_i, \qquad i=0, n-1,
 \\
 T_iT_{i+1}T_i =T_{i+1}T_iT_{i+1}, \qquad i=1,\dots,n-2,
 \\
 T_iT_j=T_jT_i, \qquad |i-j|\ge 2.
 \end{gather*}
 \end{Definition}

Similarly one has the affine Hecke algebra $H\big(R^{\vee};\mathbf k^{\vee}\big)$ in which the parameter $t_i$ is replaced
by $t_i^{\vee}$, $i=0,\dots,n$.
 For a reduced expression $w=s_{i_1}\cdots s_{i_r} $ of $w\in \mathcal W$ we set $T_w=T_{i_1}\cdots T_{i_r}$.

 The Cherednik--Dunkl type $Y$-operator is defined by
 \begin{equation*}
 Y_i= T_i\cdots T_{n-1}T_n\cdots T_0 T_1^{-1}T_2^{-1}\cdots T_{i-1}^{-1},\qquad
 i=1,\dots,n
 \end{equation*}
which is an analogue of the reduced expression of the translation $\tau(\epsilon_1)$:
 \begin{equation*}
 \tau(\epsilon_i) = s_i \cdots s_{n-1} s_n\cdots s_0 s_1\cdots s_{i-1}.
 \end{equation*}
 Lusztig \cite{L} has shown that the ${Y_i} $ commute pairwise and generate a commutative
 subalgebra~$\mathcal A_Y$ of $H$ which is isomorphic to $\mathcal A$.

Let us define rational functions $v_{\beta}(x) =v_{\beta}(x;\mathbf t;q) $ for $\beta \in R$ by
 \begin{equation} \label{eq2.2} 
 v_{\beta}(x;\mathbf t;q)=\frac{\big(1-t_{\beta}t_{\beta/2}x^{\beta/2}\big) \big(1+t_{\beta}t_{\beta/2}^{-1}x^{\beta/2}\big)}{1-x^{\beta}}.
 \end{equation}
Noumi \cite{N} (cf.\ \cite[Theorem 4.14]{St2}) has shown that the following assignment extends to a~representation
$\pi_{\mathbf { t,q}}\colon H(R;\mathbf k)\to \text{\rm{End}} _{\mathbb C}(\mathcal A$).
 \begin{equation*}
 T_i \mapsto t_i+ t_i^{-1}v_{a_i}(x;\mathbf t;q)(s_i-1) \in \text{\rm{End}} _{\mathbb C}(\mathcal A), \qquad
 i=0,\dots,n.
 \end{equation*}
 We write $Y_i$ for the image of $Y_i\in H(R;\mathbf k)$ under the representation $\pi_{\mathbf {t,q}}$.

Before proceeding to the definition of Koornwinder polynomials we need to define two partial orders
on the weight lattice $\Lambda$.
For $\lambda \in \Lambda$, let $\lambda^{+} \in \Lambda^{+} $
be the unique dominant weight in the orbit $W\lambda$.
\begin{Definition} 
Let $ \lambda, \mu \in \Lambda$.
\begin{enumerate}\itemsep=0pt
\item We write $\lambda \leq \mu$ if $\mu-\lambda \in Q^{\vee,+} $ (and $\lambda < \mu$ if
$\lambda \leq \mu$ and $\lambda \ne \mu$).

\item We write $ \lambda \preceq \mu $ if $ \lambda^{+}< \mu^{+} $, or if $ \lambda^{+}= \mu^{+} $ and
$ \lambda\leq \mu$ (and $ \lambda \prec \mu $ if $ \lambda \preceq \mu $ and $\lambda \ne \mu $).
\end{enumerate}
\end{Definition}
Let $\epsilon\colon \mathbb Z \to \{ \pm 1\}$ be the function which maps a nonnegative integer to 1 and
a negative integer to $ -1$.
Let $\Sigma_m^{+}$ (resp.~$\Sigma_{l}^{+} $) be the positive roots in $\Sigma$ of squared length 2 (resp.~4),
and~put
 \begin{equation*}
 \rho^{(m)}=2\sum_{i=1}^{n}(n-i)\epsilon_{i}, \qquad
 \rho^{(l)}= \sum_{i=1}^{n} \epsilon_i
 \end{equation*}
for the sum of coroots $\alpha^{\vee}$ with $\alpha \in \Sigma _m^{+}$ and $\alpha \in \Sigma _l^{+}$ respectively.
For an element of $\lambda \in \Lambda$ we~set
 \begin{equation*}
 \rho^{(m)}(\lambda)= \sum_{\alpha\in\Sigma_m^{+}} \epsilon((\lambda,\alpha)) \alpha^{\vee},
 \qquad
 \rho^{(l)}(\lambda)=\sum_{\alpha\in\Sigma_l^{+}} \epsilon((\lambda,\alpha)) \alpha^{\vee}.
 \end{equation*}
 Then it holds that (see \cite[proof of Lemma 5.6]{St2})
 \begin{equation*}
 \rho^{(m)}(\lambda)=w_{\lambda} \rho^{(m)}, \qquad
 \rho^{(l)}(\lambda) =w_{\lambda} \rho^{(l)}, \qquad \lambda \in \Lambda,
 \end{equation*}
 where $w_{\lambda} \in W$ is the unique element of minimal length such that
 $w_{\lambda} \lambda^{+} = \lambda $.
Let $\gamma_{\lambda}=\gamma_{\lambda}(\mathbf k,q) \allowbreak\in \mathbb C^n $ be the vector with $ i$th coordinate
given by
 \begin{equation*}
 \gamma_{\lambda,i}=(t_0t_n)^{(\rho^{(l)}(\lambda),\epsilon_i)} t^{(\rho^{(m)}(\lambda),\epsilon_i)}
 q^{(\lambda,\epsilon_i)},\qquad
 i=1,\dots,n.
 \end{equation*}
In particular for a dominant weight $\lambda \in \Lambda^{+}$ one sees
 \begin{equation*}
 \gamma_{\lambda}= \big(t_0t_nt^{2(n-1)}q^{\lambda_1},t_0t_nt^{2(n-2)}q^{\lambda_2},\dots,t_0t_nq^{\lambda_n}\big).
 \end{equation*}

 Now since the Cherednik--Dunkl $Y$-operators $Y_i$, $i=1,\dots,n$ act triangularly on $\mathcal{A}$
 with respect to the partial order $\preceq$ \cite[Proposition~4.5]{St2} and $\gamma_{\lambda,i}$ are
 mutually distinct for gene\-ric~$q$,~$\mathbf k $,
 we have \cite[Theorem 4.8]{St2}

 \begin{Theorem} 
 There exists a unique basis $ \{E_{\lambda} \}_{\lambda \in \Lambda} $ of $\mathcal A$ such that
 \begin{enumerate}\itemsep=0pt
\item[$1)$] $ E_{\lambda}(x)=x^{\lambda}+ \sum_{\mu\prec \lambda} c_{\lambda,\mu}x^{\mu} $
 {\it for certain constants} $c_{\lambda,\mu}$,

 \item[$2)$] $ Y_i E_{\lambda} = \gamma_{\lambda,i} E_{\lambda} $.
 \end{enumerate}
 \end{Theorem}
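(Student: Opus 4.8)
The statement is the standard assertion that commuting, triangular operators with pairwise distinct joint spectra admit a unique monic joint eigenbasis, so the plan is to run the Cherednik--Gram--Schmidt argument on finite-dimensional order ideals. First I would record that for each $\lambda\in\Lambda$ the set $\{\mu\in\Lambda : \mu\preceq\lambda\}$ is finite: if $\mu\preceq\lambda$ then $\mu^{+}\le\lambda^{+}$, and there are only finitely many dominant weights below a fixed one, each with a finite $W$-orbit. Hence $\mathcal A_{\preceq\lambda}:=\operatorname{span}_{\mathbb C}\{x^{\mu} : \mu\preceq\lambda\}$ is finite-dimensional, and by the triangularity of the $Y_i$ with respect to $\preceq$ (Proposition 4.5 of \cite{St2}, quoted above) it is invariant under every $Y_i$, with $Y_i x^{\lambda}=\gamma_{\lambda,i}x^{\lambda}+\sum_{\mu\prec\lambda}(\ast)x^{\mu}$. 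Thus each $Y_i$ restricts to a triangular operator on $\mathcal A_{\preceq\lambda}$ whose diagonal entry at $x^{\mu}$ is $\gamma_{\mu,i}$.

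Next I would argue that the commuting family $\{Y_i\}$ is simultaneously diagonalizable on $\mathcal A_{\preceq\lambda}$. Because the $Y_i$ commute and are individually triangular, the commutative subalgebra they generate is simultaneously triangularizable, and its joint generalized eigenspaces are indexed by the distinct vectors among $\{\gamma_{\mu} : \mu\preceq\lambda\}$. By the genericity of $q,\mathbf k$ the $\gamma_{\mu}$ are pairwise distinct as vectors, so each joint generalized eigenspace is one-dimensional and therefore an honest eigenspace. This yields a joint eigenbasis of $\mathcal A_{\preceq\lambda}$ in bijection with $\{\mu : \mu\preceq\lambda\}$, where the eigenvector attached to $\mu$ has joint eigenvalue $\gamma_{\mu}$ and leading term $x^{\mu}$.

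For existence I would exhibit $E_\lambda$ explicitly as the image of $x^{\lambda}$ under the joint spectral projection onto the $\gamma_{\lambda}$-eigenline. For each $\mu\prec\lambda$ choose (using distinctness) an index $i(\mu)$ with $\gamma_{\mu,i(\mu)}\neq\gamma_{\lambda,i(\mu)}$, and set
\[ E_{\lambda}=\Bigg(\,\prod_{\mu\prec\lambda}\frac{Y_{i(\mu)}-\gamma_{\mu,i(\mu)}}{\gamma_{\lambda,i(\mu)}-\gamma_{\mu,i(\mu)}}\Bigg)x^{\lambda}. \]
This operator preserves $\mathcal A_{\preceq\lambda}$; evaluated on the eigenvector attached to $\mu$ it multiplies by a product that vanishes once $\mu\neq\lambda$ (the factor indexed by that $\mu$ is zero) and equals $1$ when $\mu=\lambda$, so it is the projection onto $\mathbb C E_{\lambda}$. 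Since the expansion of $x^{\lambda}$ in the eigenbasis has $E_{\lambda}$-coefficient $1$ (comparison of the $x^{\lambda}$-coefficients), applying the projection returns a joint eigenvector with eigenvalue $\gamma_{\lambda}$ and leading term $x^{\lambda}$, giving properties 1) and 2).

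For uniqueness, any $E'_{\lambda}=x^{\lambda}+\sum_{\mu\prec\lambda}c'_{\lambda,\mu}x^{\mu}$ satisfying 2) lies in $\mathcal A_{\preceq\lambda}$; comparing the coefficient of the top term $x^{\lambda}$ in $Y_iE'_{\lambda}=c\,E'_{\lambda}$ forces its eigenvalue to be $\gamma_{\lambda,i}$ for every $i$, so $E'_{\lambda}$ lies in the one-dimensional $\gamma_{\lambda}$-eigenspace and, being monic in $x^{\lambda}$, equals $E_{\lambda}$. The only genuinely delicate point is the dimension count: everything hinges on the $\gamma_{\mu}$ being pairwise distinct as vectors, which is exactly where the genericity hypothesis on $(q,\mathbf k)$ enters and which guarantees that the triangular, commuting $Y_i$ are in fact jointly diagonalizable.
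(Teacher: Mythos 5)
Your proof is correct and follows exactly the route the paper indicates: the paper does not prove this theorem itself but quotes it from Stokman \cite[Theorem 4.8]{St2}, justifying it by precisely the two ingredients you use, namely the triangular action of the $Y_i$ on $\mathcal A$ with respect to $\preceq$ \cite[Proposition 4.5]{St2} and the pairwise distinctness of the spectral vectors $\gamma_\mu$ for generic $q$, $\mathbf k$. Your restriction to the finite-dimensional order ideals $\mathcal A_{\preceq\lambda}$ and the spectral-projection construction of $E_\lambda$ is the standard way of filling in that citation, so the proposal matches the paper's approach.
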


\begin{Definition}
 The Laurent polynomial $E_{\lambda}(x)=E_{\lambda}(x; \mathbf t;q)$ is called the
 monic, nonsymmetric Koornwinder polynomial of degree $\lambda$.
 \end{Definition}

 \section{Quadratic norm formula}\label{section3}

 In this section we define a bilinear form on $\mathcal A$ and give an explicit form for the quadratic norm of the
 Koornwinder polynomial $E_{\lambda}$. We assume that $0<q, t <1$.

 We put
 \begin{equation} \label{eq3.1} 
 a_1=t_0t_0^{\vee}q^{1/2},\qquad
 a_2= -t_0\big(t_0^{\vee}\big)^{-1}q^{1/2},\qquad
 a_3=t_nt_n^{\vee}, \qquad
 a_4=-t_n\big(t_n^{\vee}\big)^{-1},
 \end{equation}
 and assume also that $ |a_r|<1$, $r=1,2,3,4$. Note that, once the quadratic norm formula is established,
 we shall take the limit $a_1\to\infty$ or $a_3\to\infty$ in this formula. For simplicity we will write
 \begin{equation*}
A=a_1a_2a_3a_4 =qt_0^2t_n^2.
 \end{equation*}

 Define $\Delta(x)=\Delta(x;\mathbf t;q)$ and $\Delta_{+}(x)=\Delta_{+}(x;\mathbf t;q)$ by
\begin{equation*}
\Delta(x;\mathbf t;q)=\prod_{\beta\in R^{+}}\frac{1}{v_{\beta}(x;\mathbf t;q)}
 \end{equation*}
 and
 \begin{align*}
 \Delta_{+}(x;\mathbf t;q) &= \prod_{\beta\in R,\,\beta(0)\geq 0}\frac{1}{v_{\beta}(x;\mathbf t;q)}
 \\
 & = \prod_{1\leq i<j \leq n}
 \frac{\big(x_ix_j,x_ix_j^{-1},x_i^{-1}x_j,x_i^{-1}x_j^{-1}\big)_{\infty}}
 {\big(t^2x_ix_j,t^2x_ix_j^{-1},t^2x_i^{-1}x_j,t^2x_i^{-1}x_j^{-1}\big)_{\infty}}
 \prod_{i=1}^n
 \frac{\big(x_i^2,x_i^{-2}\big)_{\infty}}{\prod_{r=1}^4\big(a_rx_i,a_rx_i^{-1}\big)_{\infty}}.
 \end{align*}
 Then
 \begin{equation*}
 \Delta(x;\mathbf t;q)=\mathcal C(x;\mathbf t;q)\Delta_{+}(x;\mathbf t;q),
 \end{equation*}
 where $\mathcal C(x;\mathbf t;q)$ is given by
 \begin{equation*}
 \mathcal C(x;\mathbf t;q) = \prod_{\alpha\in \Sigma^{-}} v_{\alpha}(x;\mathbf t;q)
 = \prod_{1\leq i<j \leq n} \frac{\big(x_ix_j-t^2\big)\big(x_ix_j^{-1}-t^2\big)}{(x_ix_j-1)\big(x_ix_j^{-1}-1\big)}
 \prod_{i=1}^n
 \frac{(x_i-a_3)(x_i-a_4)}{x_i^2-1}.
 \end{equation*}
 Let $\dagger$ be an involution on $\mathcal A$ which maps $x_i \mapsto x_i^{-1}$ and
 $(\mathbf t,q)\mapsto \big({\mathbf t}^{-1},q^{-1}\big)$, where $\mathbf t^{-1}=\big(t_\beta^{-1}\big)_{\beta \in S}$.
We now define a bilinear form $\langle\:\, , \: \rangle =\langle\:\, , \:\rangle_{\mathbf t, q}$ on $\mathcal A$ by
 \begin{equation*}
\langle f,g \rangle = \frac{1}{(2\pi {\rm i})^n}\int_{C^n} f(x) g^{\dagger}(x)\Delta(x) \frac{{\rm d}x}{x},
 \end{equation*}
 where $\frac{{\rm d}x}{x}=\frac{{\rm d}x_1}{x_1}\dots \frac{{\rm d}x_n}{x_n}$ and $C$ is the positively oriented unit circle
 around the origin. Note that when $\Delta(x) \in \mathcal A$,
 $\langle f,g\rangle $ equals the constant term of the Laurent polynomial $f(x)g^{\dagger}(x)\Delta(x)$.

 The following property due to Sahi \cite[Theorem 3.1]{Sa}, (cf.\ \cite[Proposition 8.3]{St2}) is
 fundamental to our calculation:
 For $f,g\in \mathcal A$, we have
 \begin{equation} \label{eq3.2} 
 \langle T_i f,g \rangle
 = \big\langle f,T_i^{-1} g \big\rangle, \qquad i=0,\dots, n.
 \end{equation}

We now state the quadratic norm formula for a dominant weight $\lambda \in \Lambda^{+}$
 \cite[Theorems 8.10 and~9.3]{St2}.
\begin{gather} 
 \langle E_{\lambda}, E_{\lambda}\rangle =
 \prod_{i=1}^{n}
 \frac{\big(A q^{2\lambda_i}t^{4(n-i)}\big)_{\infty}^2\big(1-a_3a_4q^{\lambda_i}t^{2(n-i)}\big)}
 {\big(q^{\lambda_i+1}t^{2(n-i)},Aq^{\lambda_i}t^{2(n-i)}\big)_{\infty}
 \prod_{1\le r<s\le 4}\big(a_ra_s q^{\lambda_i}t^{2(n-i)}\big)_{\infty}} \nonumber
 \\ \hphantom{\langle E_{\lambda}, E_{\lambda}\rangle = }
{}\times \prod_{1 \le i<j\le n}
\frac{\big(q^{\lambda_i-\lambda_j+1}t^{2(j-i)}\big)_{\infty}^2} {\big(q^{\lambda_i-\lambda_j+1}t^{2(j-i+1)},
q^{\lambda_i-\lambda_j+1}t^{2(j-i-1)}\big)_{\infty}} \nonumber
\\ \hphantom{\langle E_{\lambda}, E_{\lambda}\rangle = \prod_{1 \le i<j\le n}}
{}\times \frac{\big(Aq^{\lambda_i+\lambda_j}t^{2(2n-i-j)}\big)_{\infty}^2}
{\big(Aq^{\lambda_i+\lambda_j}t^{2(2n-i-j+1)},Aq^{\lambda_i+\lambda_j}t^{2(2n-i-j-1)}\big)_{\infty}}.
\label{eq3.3}
\end{gather}
For a general weight $\lambda \in \Lambda$, one has \cite[Lemma 9.1 and Theorem 9.3]{St2}
 \begin{equation} \label{eq3.4} 
 \frac{\langle E_{\lambda}, E_{\lambda} \rangle }{\langle E_{\lambda^+}, E_{\lambda^+} \rangle }=
 \prod_{\alpha\in\Sigma^+ \cap w_{\lambda}^{-1} \Sigma^{-}}
 \frac{1}{\widetilde v_{\alpha}\big(\gamma_{\lambda^+}^{-1}\big)
 \widetilde v_{-\alpha}\big(\gamma_{\lambda^+}^{-1}\big)^{\dagger}}.
 \end{equation}

Let $n=n_0+n_1$ with $n_1\ge 2$. We define a subroot system $ \Sigma_1$ in $\Sigma$ by
 \begin{equation*}
 \Sigma_1=\{\pm2\epsilon_i \mid n_0+1\le i \le n \}\cup \{ \pm\epsilon_i\pm\epsilon_j \mid n_0+1\le i<j \le n\},
 \end{equation*}
 with a base $\{a_i \}_{i=n_0+1}^{n} $, so that positive roots $\Sigma_1^+$
 \big(negative roots: $\Sigma_{1}^{-}=-\Sigma_1^{+}$\big)
 are given by
 \begin{equation*}
 \Sigma_1^+ =\{2\epsilon_i \mid n_0+1\le i \le n \}\cup \{ \epsilon_i\pm\epsilon_j \mid n_0+1\le i<j \le n\}.
 \end{equation*}
The subgroup of $W$ generated by simple reflections $ \{ s_{n_0+1},\dots,s_n\}$ will be denoted by $W_1$.

 Let $\rho_1$ be a weight in $\Lambda$ defined by
\begin{equation*}
 \rho_1=\frac{1}{2} \sum_{\alpha\in \Sigma_{1}^{+}} \alpha=\big(0^{n_0},n_1,n_1-1,\dots,1\big).
\end{equation*}

We write down explicitly the norm $\langle E_{ \rho_1}, E_{ \rho_1}\rangle $.
Since $ \rho_1^+= (n_1,\dots,1,0^{n_0})$, the formula for the
the norm $ \bigl< E_{\rho_1^+}, E_{ \rho_1^+}\bigr>$ is immediate from~\eqref{eq3.3}.
Moreover from~\eqref{eq3.4} we find
\begin{equation} \label{eq3.5} 
 \frac{\langle E_{ \rho_1}, E_{ \rho_1} \rangle }{\bigl< E_{ \rho_1^+}, E_{ \rho_1^+} \bigr>}=
 \prod_{i=1}^{n_1}
\prod_{j=n_1+1}^{n} \frac{\big(1-q^{n_1-i+1} t^{2(j-i)}\big)^2}{ \big(1-q^{n_1-i+1}t^{2(j-i+1)}\big)\big(1-q^{n_1-i+1}t^{2(j-i-1)}\big)}.
\end{equation}
In fact, to see this, it is sufficient to note that the element $w_{ \rho_1} \in W $ is given by the permutation
\begin{equation*}
\genfrac{(}{)}{0mm}{0}{1,\dots,n_0,n_0 +1,\dots, n}{n_1+1,\dots,n,1,\dots,n_1}
\end{equation*}
so that
\begin{equation*}
\Sigma^{+}\cap w_{ \rho_1}^{-1} \Sigma^{-}=\{\epsilon_i-\epsilon_j\colon 1\le i\le n_1, \, n_1+1\le j \le n \},
\end{equation*}
and that
\begin{equation*}
\gamma_{ \rho_1^{+}}= \big(t_0t_nt^{2(n-1)}q^{n_1},\dots,t_0t_nt^{2(n-n_1)}q,t_0t_nt^{2(n-n_1-1)},\dots,
t_0t_n\big).
\end{equation*}
Combining~\eqref{eq3.3} and~\eqref{eq3.5}, we arrive at the explicit formula of the norm
$ \langle E_{ \rho_1}, E_{ \rho_1}\rangle $.
\begin{Proposition} \label{proposition3.1} 
For the weight $ \rho_1=(0^{n_0}, n_1,n_1-1,\dots,1)$, we have
 \begin{gather} 
 \langle E_{ \rho_1}, E_{ \rho_1}\rangle = \prod_{i=1}^{n_1}\frac{\big(Aq^{2(n_1-i+1)}t^{4(n-i)}\big)_{\infty}^2 \big(1-a_3a_4q^{n_1-i+1}t^{2(n-i)}\big)}
 {\big(q^{n_1-i+2}t^{2(n-i)},Aq^{n_1-i+1}t^{2(n-i)}\big)_{\infty} \prod_{1\le r<s\le 4} \big(a_ra_sq^{n_1-i+1}t^{2(n-i)}\big)_{\infty}} \nonumber
\\ \hphantom{\langle E_{ \rho_1}, E_{ \rho_1}\rangle =}
 {}\times \prod_{i=n_1+1}^n\frac{\big(At^{4(n-i)}\big)_{\infty}^2
 \big(1-a_3a_4t^{2(n-i)}\big)}{\big(qt^{2(n-i)},a_0t^{2(n-i)}\big)_{\infty}
 \prod_{1\le r<s\le 4}\big(a_ra_st^{2(n-i)}\big)_{\infty}} \nonumber
 \\ \hphantom{\langle E_{ \rho_1}, E_{ \rho_1}\rangle =}
{}\times \prod_{1\le i<j\le n_1} \frac{\big(q^{j-i+1}t^{2(j-i)}\big)_{\infty}^2}
 {\big(q^{j-i+1}t^{2(j-i+1)},q^{j-i+1}t^{2(j-i-1)}\big)_{\infty}} \nonumber
 \\ \hphantom{\langle E_{ \rho_1}, E_{ \rho_1}\rangle =\times \prod_{1\le i<j\le n_1}}
{}\times \frac{\big(Aq^{2n_1-i-j+2}t^{2(2n-i-j)}\big)_{\infty}^2}{\big(Aq^{2n_1-i-j+2}t^{2(2n-i-j+1)},
Aq^{2n_1-i-j+2}t^{2(2n-i-j-1)}\big)_{\infty}} \nonumber
\\ \hphantom{\langle E_{ \rho_1}, E_{ \rho_1}\rangle =}
{}\times\prod_{i=1}^{n_1} \prod_{j=n_1+1}^{n}
 \frac{\big(q^{n_1-i+1}t^{2(j-i)}\big)_{\infty}^2}{\big(q^{n_1-i+1}t^{2(j-i+1)},
 q^{n_1-i+1}t^{2(j-i-1)}\big)_{\infty}} \nonumber
 \\ \hphantom{\langle E_{ \rho_1}, E_{ \rho_1}\rangle =\times\prod_{i=1}^{n_1}\prod_{j=n_1+1}^{n}}
{}\times \frac{\big(Aq^{n_1-i+1}t^{2(2n-i-j)}\big)_{\infty}^2}{\big(Aq^{n_1-i+1}t^{2(2n-i-j+1)},
 Aq^{n_1-i+1}t^{2(2n-i-j-1)}\big)_{\infty}} \nonumber
 \\ \hphantom{\langle E_{ \rho_1}, E_{ \rho_1}\rangle =}
{}\times\prod_{n_1+1\le i<j\le n}
 \frac{\big(qt^{2(j-i)},At^{2(2n-i-j)}\big)_{\infty}^2}{\big(qt^{2(j-i+1)},qt^{2(j-i-1)}, At^{2(2n-i-j+1)},At^{2(2n-i-j-1)}\big)_{\infty}}.\label{eq3.6}
 \end{gather}
 \end{Proposition}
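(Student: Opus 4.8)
The plan is to derive \eqref{eq3.6} by specialising the dominant-weight norm formula \eqref{eq3.3} to $\lambda=\rho_1^+=(n_1,n_1-1,\dots,1,0^{n_0})$ and then multiplying by the correction factor \eqref{eq3.5}. Accordingly I would first substitute $\lambda_i=n_1-i+1$ for $1\le i\le n_1$ and $\lambda_i=0$ for $n_1+1\le i\le n$ into \eqref{eq3.3}. The single-index product on the first line of \eqref{eq3.3} then breaks into two ranges: for $1\le i\le n_1$ the exponents become $2\lambda_i=2(n_1-i+1)$ and $\lambda_i+1=n_1-i+2$, reproducing the product over $1\le i\le n_1$ in \eqref{eq3.6}; for $n_1+1\le i\le n$ every $q^{\lambda_i}$ collapses to $1$, reproducing the product over $n_1+1\le i\le n$.

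Next I would split the pair product of \eqref{eq3.3} into the three index classes determined by the position of $i,j$ relative to $n_1$. For $1\le i<j\le n_1$ one has $\lambda_i-\lambda_j=j-i$ and $\lambda_i+\lambda_j=2n_1-i-j+2$, which yields the two factors of the $\prod_{1\le i<j\le n_1}$ block of \eqref{eq3.6}. For $n_1+1\le i<j\le n$ both weights vanish, so $\lambda_i-\lambda_j=\lambda_i+\lambda_j=0$ and the factors collapse to the final $\prod_{n_1+1\le i<j\le n}$ block. For the mixed pairs $1\le i\le n_1<j\le n$ one gets $\lambda_i-\lambda_j=\lambda_i+\lambda_j=n_1-i+1$; here the sum-type factor immediately matches the second factor of the $\prod_{i=1}^{n_1}\prod_{j=n_1+1}^{n}$ block, while the difference-type factor carries $q^{n_1-i+2}$ in place of the $q^{n_1-i+1}$ demanded by \eqref{eq3.6}.

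It then remains to multiply by \eqref{eq3.5}, whose factors are indexed precisely by the mixed pairs $1\le i\le n_1$, $n_1+1\le j\le n$. Writing each factor of \eqref{eq3.5} as $\dfrac{(1-q^{n_1-i+1}t^{2(j-i)})^2}{(1-q^{n_1-i+1}t^{2(j-i+1)})(1-q^{n_1-i+1}t^{2(j-i-1)})}$ and using the elementary identity $(x)_\infty=(1-x)(xq)_\infty$, I would check that these $(1-x)$ terms combine with the shifted Pochhammer symbols to restore the unshifted ones, converting the mixed difference-type factor with $q^{n_1-i+2}$ into the one with $q^{n_1-i+1}$ in the $\prod_{i=1}^{n_1}\prod_{j=n_1+1}^{n}$ block. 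Collecting the factors then produces \eqref{eq3.6}. The correction factor \eqref{eq3.5} itself follows from \eqref{eq3.4} together with the explicit permutation $w_{\rho_1}$, its inversion set $\Sigma^+\cap w_{\rho_1}^{-1}\Sigma^-=\{\epsilon_i-\epsilon_j : 1\le i\le n_1,\ n_1+1\le j\le n\}$, and the value of $\gamma_{\rho_1^+}$ recorded above, all of which I may assume.

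I expect the genuine work to lie entirely in the bookkeeping of the second and third steps: keeping the three classes of index pairs cleanly separated, and verifying that the single correction factor \eqref{eq3.5} acts only on the mixed difference-type factors, shifting $q^{n_1-i+2}$ down to $q^{n_1-i+1}$ through the $q$-shift $(x)_\infty=(1-x)(xq)_\infty$. No further identity is needed; the rest is direct substitution.
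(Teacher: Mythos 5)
Your proposal is correct and follows essentially the same route as the paper: specialise \eqref{eq3.3} to $\rho_1^+$, split the pair products into the three index classes relative to $n_1$, and multiply by the correction factor \eqref{eq3.5} (itself obtained from \eqref{eq3.4} via the inversion set of $w_{\rho_1}$), whose factors act exactly on the mixed difference-type Pochhammer symbols, shifting $q^{n_1-i+2}$ down to $q^{n_1-i+1}$ through $(x)_\infty=(1-x)(xq)_\infty$. The one discrepancy you pass over silently is purely typographical: the symbol $a_0$ in the second block of \eqref{eq3.6} is a misprint for $A$, which is precisely what your substitution $\lambda_i=0$ produces.
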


 \section{Partial antisymmetrization} \label{section4}

 In this section we define partial (anti)symmetrization and calculate the norm of partial antisymmetrization of the
 Koornwinder polynomial $E_{ \rho_1} $.

 Let $l\colon W_1\to \mathbb Z_{\ge 0}$ be the length function on $W_1$ defined by
 \begin{equation*}
 l(w)=\big\vert \Sigma_1^{+} \cap w^{-1} \Sigma_1^{-} \big\vert.
 \end{equation*}
The length function $l(w)$ is equal to the number $\mathit{r}$ of the reduced expression $ w=s_{i_1}\cdots s_{i_r}$
(cf.\ \cite{Hu}).
 Let $t_w$ be a function on $W_1$ defined
by $t_w=t_{i_1}\cdots t_{i_r} $ for a reduced expression $ w=s_{i_1}\cdots s_{i_r}$.
We define partial symmetrization $U^{+}_1$ and partial antisymmetrization $U^{-}_1$ as follows:
 \begin{equation} \label{eq4.1} 
 U^{\pm}_1=\frac{1}{\sum_{w\in W_1} t_w^ {\pm 2} }\sum_{w\in W_1} (\pm1)^{l(w)} t_w^{\pm1} T_w.
\end{equation}
 Then the following fundamental properties hold.

 \begin{Proposition}
 \begin{gather} 
 \big(T_i\mp t_i^{\pm 1}\big)\big( U^{\pm}_1\big) = 0 \qquad for \quad i=n_0+1,\dots,n, \label{eq4.2}
 \\
 {U{_1}^{\pm}} ^2 = U^{\pm}_1, \label{eq4.3}
 \\
 \langle U^{\pm}_1f, g \rangle = \langle f, U^{\pm}_1g \rangle . \label{eq4.4}
\end{gather}
 \end{Proposition}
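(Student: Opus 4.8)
The plan is to establish \eqref{eq4.2} first by a direct coset computation inside the finite Hecke algebra generated by $T_{n_0+1},\dots,T_n$, and then to read off \eqref{eq4.3} and \eqref{eq4.4} as consequences. Throughout I write $v^{\pm}_w=(\pm1)^{l(w)}t_w^{\pm1}$ and $c_{\pm}=\sum_{w\in W_1}t_w^{\pm2}$, so that $U^{\pm}_1=c_{\pm}^{-1}\sum_{w\in W_1}v^{\pm}_w T_w$, and I use the quadratic relation $T_i^2=(t_i-t_i^{-1})T_i+1$, which is merely a rewriting of $(T_i-t_i)(T_i+t_i^{-1})=0$.

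For \eqref{eq4.2}, fix $i\in\{n_0+1,\dots,n\}$ and decompose $W_1$ into the pairs $\{w,s_iw\}$ with $s_iw>w$; these pairs partition $W_1$. For such a $w$ one has $T_{s_iw}=T_iT_w$ and $v^{\pm}_{s_iw}=\pm t_i^{\pm1}v^{\pm}_w$, so the two terms of a pair sum to $v^{\pm}_w(1\pm t_i^{\pm1}T_i)T_w$. Applying $T_i$ on the left and using the quadratic relation to eliminate $T_i^2$, the scalar $1\pm t_i^{\pm1}(t_i-t_i^{-1})$ collapses to $t_i^{\pm2}$ in both the symmetric and the antisymmetric case, and one finds that $T_i$ carries each pair sum to $\pm t_i^{\pm1}$ times itself. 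Summing over all pairs gives $T_iU^{\pm}_1=\pm t_i^{\pm1}U^{\pm}_1$, which is \eqref{eq4.2}. The identical computation performed with right multiplication yields the companion identity $U^{\pm}_1T_i=\pm t_i^{\pm1}U^{\pm}_1$.

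The idempotency \eqref{eq4.3} is then immediate. Iterating \eqref{eq4.2} along a reduced word $w=s_{i_1}\cdots s_{i_r}$ gives $T_wU^{\pm}_1=v^{\pm}_wU^{\pm}_1$. Substituting this into ${U^{\pm}_1}^2=c_{\pm}^{-1}\sum_{w}v^{\pm}_w T_wU^{\pm}_1$ turns the coefficient of $U^{\pm}_1$ into $c_{\pm}^{-1}\sum_w (v^{\pm}_w)^2=c_{\pm}^{-1}\sum_w t_w^{\pm2}=1$, since $(\pm1)^{2l(w)}=1$; hence ${U^{\pm}_1}^2=U^{\pm}_1$.

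Finally, for the self-adjointness \eqref{eq4.4} I apply \eqref{eq3.2} term by term. Peeling off the generators of $T_w$ one at a time moves the entire factor into the second argument and produces $\langle U^{\pm}_1 f,g\rangle=c_{\pm}^{-1}\sum_{w}v^{\pm}_w\langle f,T_{w^{-1}}g\rangle$. Since both $l(w)$ and $t_w$ are invariant under $w\mapsto w^{-1}$, one has $v^{\pm}_{w^{-1}}=v^{\pm}_w$, so reindexing the sum by $w\mapsto w^{-1}$ returns exactly $\langle f,U^{\pm}_1 g\rangle$. I expect the delicate point of the whole proposition to be precisely this step, namely verifying that iterating \eqref{eq3.2} lands on the positive-generator element $T_{w^{-1}}$ rather than on a product of inverse generators; this is where the parameter-inverting involution $\dagger$ built into the bilinear form is essential, since a naive reading would instead pair $U^{\pm}_1$ with a symmetrizer for the inverted parameters. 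Once that bookkeeping is settled, the inversion-invariance of $l$ and $t_w$ closes the argument.
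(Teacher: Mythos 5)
Your arguments for \eqref{eq4.2} and \eqref{eq4.3} are correct and essentially identical to the paper's: the same coset pairing $\{w,s_iw\}$, the same use of the quadratic relation, and the same iteration of \eqref{eq4.2} to obtain idempotency.

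Your proof of \eqref{eq4.4}, however, has a genuine gap, and it sits exactly at the step you yourself flag as delicate. Iterating \eqref{eq3.2} along a reduced expression $w=s_{i_1}\cdots s_{i_r}$ gives
\begin{equation*}
\langle T_wf,g\rangle=\big\langle f,T_{i_r}^{-1}\cdots T_{i_1}^{-1}g\big\rangle=\big\langle f,T_w^{-1}g\big\rangle,
\end{equation*}
a product of \emph{inverse} generators; it does not land on the positive-generator element $T_{w^{-1}}=T_{i_r}\cdots T_{i_1}$. In a Hecke algebra these genuinely differ (already $T_i^{-1}=T_i-t_i+t_i^{-1}\neq T_i$), and no further appeal to $\dagger$ can convert one into the other, because $\dagger$ is already built into \eqref{eq3.2}. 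Moreover, even granting that step, your conclusion would not follow: since the form daggers parameter-dependent scalars in its second slot, one has $\langle f,U_1^{\pm}g\rangle=\big(\sum_{w}t_w^{\mp2}\big)^{-1}\sum_{w}(\pm1)^{l(w)}t_w^{\mp1}\langle f,T_wg\rangle$ (your ``symmetrizer for the inverted parameters''), not $\big(\sum_{w}t_w^{\pm2}\big)^{-1}\sum_{w}(\pm1)^{l(w)}t_w^{\pm1}\langle f,T_wg\rangle$, so the reindexing $w\mapsto w^{-1}$ together with the invariance of $l(w)$ and $t_w$ under inversion cannot close the argument. What is actually needed, and what the paper proves, is the operator identity
\begin{equation*}
U_1^{\pm}=\frac{1}{\sum_{w\in W_1}t_w^{\mp2}}\sum_{w\in W_1}(\pm1)^{l(w)}t_w^{\mp1}T_w^{-1},
\end{equation*}
i.e.\ that $U_1^{\pm}$ equals its own adjoint expression. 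This is not a formal reindexing statement: the paper derives it via the longest element $w_1\in W_1$, replacing the summation index $w$ by $w_1w$ in \eqref{eq4.1}, using $l(w_1w)=l(w_1)-l(w)$ to turn each $T_{w_1w}$ into $T_{w_1}$ times an inverse element, and then absorbing the resulting prefactor $(\pm1)^{l(w_1)}t_{w_1}^{\mp1}T_{w_1}$ by showing that $\sum_{w}(\pm1)^{l(w)}t_w^{\mp1}T_w^{-1}$ is itself a $T_i$-eigenvector with eigenvalue $\pm t_i^{\pm1}$ (a computation parallel to \eqref{eq4.2}). Your proposal is missing this entire step, which is the real content of \eqref{eq4.4}; ``once that bookkeeping is settled'' is precisely the assertion that remains unproved.
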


\begin{proof}
The proof is carried out in exactly the same way as in \cite[Lemma 8.9]{St2}.
For~\eqref{eq4.2}, fix $i \in \{n_0+1,\dots,n\}$ and decompose $W_1=W_{1,i}^{+}\sqcup W_{1,i}^{-} $, where
$W_{1,i}^{\pm}=\{w\in W_1\colon l(s_iw)=l(w)\pm 1 \}$. Note that $W_{1,i}^{-}=s_i W_{1,i}^{+}$ and that
$t_{s_iw}=t_it_w, T_{s_iw}=T_iT_w$ for $w\in W_1^{+}$. Then
\begin{align*}
T_i\bigg(\sum_{w\in W_1} (\pm1)^{l(w)} t_w^{\pm1} T_w\bigg) &=
\sum_{w\in W_{1,i}^{+}}\bigl((\pm 1)^{l(w)} t_w^{\pm 1}T_iT_w+ (\pm 1)^{l(w)+1}
t_{s_iw}^{\pm 1}T_iT_{s_iw}\bigr)
\\
 & = \sum_{w\in W_{1,i}^{+}}(\pm 1)^{l(w)}t_w^{\pm 1}\big( T_i \pm t_{i}^{\pm 1}T_i^2\big)T_w
 \\
& = \sum_{w\in W_{1,i}^{+}}(\pm 1)^{l(w)}t_w^{\pm 1}\big( \pm t_i^{\pm1}+t_i^{\pm 2} T_i \big)T_w
 \\
& = \pm t_i^{\pm 1} \sum_{w\in W_{1}} (\pm1)^{l(w)} t_w^{\pm1} T_w,
\end{align*}
where the quadratic relations $ (T_i-t_i)\big(T_i+t_i^{-1}\big)=0 $ are applied to the third equality. Equation~\eqref{eq4.3}
follows from~\eqref{eq4.2} immediately.
For the proof of~\eqref{eq4.4}, in view of~\eqref{eq3.2}, it suffices to show that
\begin{equation*}
U_1^{\pm}= \frac{1}{\sum_{w\in W_1} t_w^ {\mp 2} }\sum_{w\in W_1} (\pm1)^{l(w)} t_w^{\mp1} T_w^{-1}.
\end{equation*}
Let $w_1$ be the longest element in $W_1$. Then $ l(w_1w)=l(w_1)-l(w)$ for $w\in W_1$, so that
$t_{w_1 w}=t_{w_1}t_w^{-1}$ and $T_{w_1 w}=T_{w_1}T_w^{-1}$ for $w\in W_1^{+}$. We change the summation
over $w\in W_1$ in~\eqref{eq4.1} to a summation over $w_1w$, $w\in W_1$. Observe that
\begin{gather*}
\sum_{w\in W_1} t_w^ {\pm 2} =t_{w_1}^{\pm 2} \sum_{w\in W_1} t_w^ {\mp 2}, \qquad
 \sum_{w\in W_1} (\pm1)^{l(w)} t_w^{\pm1} T_w
=(\pm 1)^{l(w_1)}t_{w_1}^{\pm 1}T_{w_1}\sum_{w\in W_1} (\pm 1)^{l(w)}t_{w}^{\mp}T_w^{-1}.
\end{gather*}
Substituting these equations into~\eqref{eq4.1}, and using the relation
\begin{equation*}
T_i(\sum_{w\in W_1}(\pm1)^{l(w)}t_{w}^{\mp}T_w^{-1})=\pm t_i^{\pm} \sum_{w\in W_1}(\pm1)^{l(w)}t_{w}^{\mp}T_w^{-1},
\end{equation*}
whose proof is similar to that of~\eqref{eq4.2}, we have
\begin{align*}
U_1^{\pm} & =(\pm 1)^{l(w_1)}t_{w_1}^{\mp 1}T_{w_1}\frac{1}{\sum_{w\in W_1} t_w^ {\mp 2} }\sum_{w\in W_1}
 (\pm1)^{l(w)}t_{w}^{\mp}T_w^{-1}
 \\
 & = \frac{1}{\sum_{w\in W_1} t_w^ {\mp 2} }\sum_{w\in W_1}
 (\pm1)^{l(w)}t_{w}^{\mp}T_w^{-1},
 \end{align*}
as desired.
 \end{proof}

We need to know the quadratic norm of the partial antisymmetrization $U_1^{-} E_{\rho_1}(x)$.
For this purpose it suffices to calculate
the expansion coefficient of $ E_{\rho_1}(x)$ in $U_1^{-} E_{\rho_1}(x)$.
The calculation is based on the following fundamental formula \cite[Proposition 6.1]{St2}:
\begin{equation} \label{eq4.5} 
T_i E_{\lambda}=\xi_i(\gamma_{\lambda})E_{\lambda}+\eta_i(\gamma_{\lambda})E_{s_i\lambda}
\end{equation}
with
\begin{equation*}
\xi_i(x)=\tilde{t}_i-\tilde{t}_i^{-1}\tilde{v}_{-a_i}(x)= \frac{\big(\tilde{t}_{a_i}^{-1}-\tilde{t}_{a_i}\big)x^{a_i}+\big(\tilde{t}_{a_i/2}^{-1}
-\tilde{t}_{a_i/2}\big)x^{a_i/2}}{1-x^{a_i}}
\end{equation*}
and
\begin{equation} \label{eq4.6} 
\eta_i(\gamma_{\lambda})=
\begin{cases}
\tilde{t}_i & \text{if} \quad \langle \lambda,a_i\rangle <0,
\\
\tilde{t}_i^{-3}\tilde{v}_{a_i}(\gamma_{\lambda})\tilde{v}_{-a_i}(\gamma_{\lambda})
& \text{if} \quad \langle \lambda,a_i\rangle \ge 0.
\end{cases}
\end{equation}

\begin{Proposition}
We have
\begin{equation*}
 U_1^{-} E_{\rho_1}(x)= \sum_{\mu\in W_1\rho_1} c_{\rho_1 \mu}E_{\mu}(x),
 \end{equation*}
 where
 \begin{gather}
 c_{\rho_1 \rho_1} = \frac{1}{\sum_{w\in W_1} \tilde t_w^{-2}}
 \prod_{1\le i< j \le n_1} \frac{\big(1-q^{j-i}t^{2(j-i-1)}\big)
 \big(1-Aq^{2n_1-i-j+1}t^{2(2n-i-j-1)} \big)}
 {\big(1-q^{j-i}t^{2(j-i)}\big)\big(1-Aq^{2n_1-i-j+1}t^{2(2n-i-j)} \big)} \nonumber
 \\ \hphantom{c_{\rho_1 \rho_1} = }
{}\times\prod_{i=1}^{n_1} \frac{\big(1-q^{n_1-i+1}t^{2(n-i)}\big)\big(1-a_1a_2q^{n_1-i}t^{2(n-i)}\big)} {1-a_0q^{2(n_1-i+1)-1}t^{4(n-i)}}.
\label{eq4.7}
 \end{gather}
\end{Proposition}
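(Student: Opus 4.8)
The plan is to extract only the diagonal coefficient $c_{\rho_1\rho_1}$, since that is all \eqref{eq4.7} records. First I would note that, because $\rho_1$ is strictly dominant for $W_1$ (so $\langle\rho_1,a_i\rangle>0$ for $i=n_0+1,\dots,n$ and $\rho_1$ has trivial $W_1$-stabilizer), \eqref{eq4.5} shows $T_iE_\mu\in\mathrm{span}\{E_\mu,E_{s_i\mu}\}$ with $s_i\mu\in W_1\mu$; hence each $T_wE_{\rho_1}$ ($w\in W_1$), and so $U_1^-E_{\rho_1}$, lies in $\mathrm{span}\{E_\mu:\mu\in W_1\rho_1\}$, which justifies the stated expansion (the $E_\mu$ form a basis of $\mathcal A$). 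Writing $F=\sum_{w\in W_1}(-1)^{l(w)}t_w^{-1}T_wE_{\rho_1}=\sum_{\mu\in W_1\rho_1}b_\mu E_\mu$, we have $c_{\rho_1\rho_1}=b_{\rho_1}\big/\sum_{w\in W_1}\tilde t_w^{-2}$ (note $t_w=\tilde t_w$ on $W_1$, since $s_{n_0+1},\dots,s_n$ carry the self-dual multiplicities $t$ and $t_n$), so everything reduces to evaluating $b_{\rho_1}$.

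Next I would derive a recursion for the $b_\mu$. By \eqref{eq4.2} the element $F$ satisfies $T_iF=-t_i^{-1}F$ for $i=n_0+1,\dots,n$; expanding both sides by \eqref{eq4.5} and comparing coefficients of $E_\mu$ gives $b_\mu\xi_i(\gamma_\mu)+b_{s_i\mu}\eta_i(\gamma_{s_i\mu})=-t_i^{-1}b_\mu$. Using the elementary identity $\tilde v_{a_i}(x)+\tilde v_{-a_i}(x)=1+\tilde t_{a_i}^2$, which one checks directly from \eqref{eq2.2} for both the short and the long simple root (the $\tilde t_{a_i/2}$-dependence cancelling), one simplifies $\xi_i(\gamma_\mu)+t_i^{-1}=t_i^{-1}\tilde v_{a_i}(\gamma_\mu)$; together with the first branch of \eqref{eq4.6} (applicable since $\langle\mu,a_i\rangle>0\Rightarrow\langle s_i\mu,a_i\rangle<0$, whence $\eta_i(\gamma_{s_i\mu})=\tilde t_i=t_i$) this yields the clean lowering recursion
\[ b_{s_i\mu}=-t_i^{-2}\,\tilde v_{a_i}(\gamma_\mu)\,b_\mu,\qquad \langle\mu,a_i\rangle>0. \]
The overall scale is then fixed by computing the extreme coefficient $b_{w_1\rho_1}$ ($w_1$ the longest element of $W_1$) directly from the definition of $F$: only $w=w_1$ can reach the lowest weight $w_1\rho_1$, and along a reduced word every step uses the second branch of \eqref{eq4.6}. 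Equating this with the value obtained by iterating the lowering recursion along the same reduced word, and reconciling the intermediate evaluation points through the cocycle property of the functions $v_\beta$ (the same property that makes $\Delta$ and $\mathcal C$ well-defined products over roots in Section~\ref{section3}), the $\eta$-factors recombine so that all local contributions collapse to
\[ b_{\rho_1}=\prod_{\alpha\in\Sigma_1^{+}} t_\alpha^{-2}\,\tilde v_{-\alpha}(\gamma_{\rho_1}). \]
This telescoping is the heart of the argument and is carried out exactly in the spirit of Stokman's leading-coefficient computations \cite{St2}.

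Finally I would evaluate the $|\Sigma_1^{+}|=n_1^2$ factors $t_\alpha^{-2}\tilde v_{-\alpha}(\gamma_{\rho_1})$. From the explicit weight, $\gamma_{\rho_1,\,n_0+i}=t_0t_nt^{2(n-i)}q^{\,n_1-i+1}$ for $i=1,\dots,n_1$, so that $\gamma_{\rho_1,\,n_0+i}/\gamma_{\rho_1,\,n_0+j}=q^{\,j-i}t^{2(j-i)}$ and $\gamma_{\rho_1,\,n_0+i}\gamma_{\rho_1,\,n_0+j}=Aq^{\,2n_1-i-j+1}t^{2(2n-i-j)}$, using $A=qt_0^2t_n^2$. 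For a short root $\epsilon_{n_0+i}-\epsilon_{n_0+j}$ one has $\tilde t_{\alpha/2}=1$, $\tilde t_\alpha=t$, hence $t^{-2}\tilde v_{-\alpha}(\gamma_{\rho_1})=\big(1-q^{\,j-i}t^{2(j-i-1)}\big)\big/\big(1-q^{\,j-i}t^{2(j-i)}\big)$; for $\epsilon_{n_0+i}+\epsilon_{n_0+j}$ one gets $\big(1-Aq^{\,2n_1-i-j+1}t^{2(2n-i-j-1)}\big)\big/\big(1-Aq^{\,2n_1-i-j+1}t^{2(2n-i-j)}\big)$; and for the long root $2\epsilon_{n_0+i}$, using \eqref{eq2.1} with $\tilde t_\alpha=t_n$, $\tilde t_{\alpha/2}=t_0$ and $a_1a_2=-qt_0^2$, clearing denominators gives $\big(1-q^{\,n_1-i+1}t^{2(n-i)}\big)\big(1-a_1a_2q^{\,n_1-i}t^{2(n-i)}\big)\big/\big(1-Aq^{\,2n_1-2i+1}t^{4(n-i)}\big)$. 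Multiplying these three families over $\Sigma_1^{+}$ reproduces the three products in \eqref{eq4.7}. I expect the normalization/collecting step of the second paragraph—matching the evaluation points of the two computations of $b_{w_1\rho_1}$ via the cocycle relation—to be the main obstacle, the final evaluations being routine though lengthy.
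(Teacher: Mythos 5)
Your proposal is correct and follows essentially the same route as the paper's own proof: the paper likewise computes the extreme coefficient $c_{\rho_1\rho_1^{R}}$ (with $\rho_1^{R}=w_1\rho_1$) directly from the definition of $U_1^{-}$, see \eqref{eq4.8}, obtains the ratio $c_{\rho_1\rho_1^{R}}/c_{\rho_1\rho_1}$ from the recursion forced by $\big(T_i+t_i^{-1}\big)U_1^{-}E_{\rho_1}=0$, see \eqref{eq4.9}, divides, and then evaluates the surviving factors $\tilde{v}_{-\alpha}(\gamma_{\rho_1})$ explicitly. The ``reconciliation of evaluation points'' you flag as the main obstacle is precisely \eqref{eq4.10} together with \eqref{eq4.11} (Stokman's Lemma~4.6 plus the reduced-word enumeration of $\Sigma_1^{+}$), so your outline matches the paper step for step, including the final explicit evaluations, which agree with \eqref{eq4.7}.
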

\begin{proof}
Put
\begin{equation*}
\rho_1^{R}=w_1 \rho_1= \big(0^{n_0}, -n_1,\dots,-1\big).
\end{equation*}
We will prove that
\begin{equation} \label{eq4.8} 
c_{\rho_1\rho_1^{R}} = \frac{1}{\sum_{w\in W_1} \tilde t_w^{-2}}(-1)^{l(w_1)}\tilde{t}^{-4}_{w_1}
\prod_{\alpha\in\Sigma_1^{+}}
\tilde{v}_{\alpha}(\gamma_{\rho_1})\tilde{v}_{-\alpha}(\gamma_{\rho_1}),
\end{equation}
and
\begin{equation} \label{eq4.9} 
\frac{c_{\rho_1\rho_1^{R}} }{ c_{\rho_1 \rho_1}}=(-1)^{l(w_1)}\tilde{t}^{-2}_{w_1}\prod_{\alpha\in\Sigma_1^{+}}
\tilde{v}_{\alpha}(\gamma_{\rho_1}),
\end{equation}
and hence
\begin{equation*}
c_{\rho_1\rho_1}=\frac{1}{\sum_{w\in W_1} \tilde t_w^{-2}}\tilde{t}^{-2}_{w_1}\prod_{\alpha\in\Sigma_1^{+}}
\tilde{v}_{-\alpha}(\gamma_{\rho_1}).
\end{equation*}

Let $w_1=s_{i_1}\cdots s_{i_r} $ ($r=n_1^2$, actually) be a reduced expression.
It follows from~\eqref{eq4.5} that the coefficient of
$E_{\rho_1^{R}}$ in the expansion of $T_{w_1}E_{\rho_1}$ is given by
$\prod_{j=1}^{r}\eta_ {i_j}(\gamma_{s_{i_{j+1}}\cdots s_{i_{r}} \rho_1}) $.
But since the product $s_{i_1}\cdots s_{i_r} $ is reduced, we see that
$(s_{i_{j+1}}\cdots s_{i_{r}})^{-1} a_{i_j}$ is positive\,(cf.\ \cite[Lemma 1.6]{Hu}),
so that $ \langle a_{i_j}, s_{i_{j+1}}\cdots s_{i_{r}}\rho_1 \rangle >0$. It also holds, by
virtue of \cite[Lemma 4.6]{St2}, that
\begin{equation} \label{eq4.10} 
\gamma_{s_{i_{j+1}}\cdots s_{i_{r}}\rho_1}^{a_{i_j}}=\gamma_{\rho_1}^{s_{i_r}\cdots s_{i_{j+1}}a_{i_j}}.
\end{equation}
Hence~\eqref{eq4.8} follows from the fact that
\begin{equation} \label{eq4.11} 
\{s_{i_r}\cdots s_{i_{2}}a_{i_1},\dots,a_{i_r} \}=\Sigma_1^{+}.
\end{equation}

For~\eqref{eq4.9}, we note that~\eqref{eq4.2} implies the following equality.
\begin{equation*}
(T_i+t_i^{-1})U_1^{-}E_{\rho_1}=0.
\end{equation*}
This is equivalent to the fact that the coefficients $c_{\rho_1 \mu}$ satisfy the recurrence equations
\begin{equation*}
c_{\rho_1 s_i\mu}=\bigg({-}\frac{\xi_i^{-}(\gamma_{\mu})}{\eta_i(\gamma_{s_i \mu})} \bigg)c_{\rho_1 \mu},
\end{equation*}
where
\begin{equation*}
\xi_i^{-}(x)= \xi_i(x)+t_i^{-1}=\tilde{t}_{i}^{-1}\tilde{v}_{a_i}(x).
\end{equation*}
provided $\langle \mu,a_i \rangle \ne 0$. Since $\langle a_{i_j},s_{i_j}\cdots s_{i_r}\rho_1 \rangle
=- \langle a_{i_j},s_{i_{j+1}}
\cdots s_{i_r}\rho_1 \rangle <0 $, it follows from~\eqref{eq4.6} that
\begin{equation*}
 \eta_i(\gamma_{s_{i_j}\cdots s_{i_r}\rho_1} ) =\tilde{t}_i.
\end{equation*}
Now~\eqref{eq4.9} is immediate from~\eqref{eq4.10} and~\eqref{eq4.11}.

Finally to see the explicit formula~\eqref{eq4.7}, we note that
\begin{equation*}
 f(\gamma_{\lambda})=w_{\lambda}^{-1}f(\gamma_{\lambda^{+}}), \qquad f\in \mathcal A,
 \qquad \lambda \in \Lambda,
\end{equation*}
again by \cite[Lemma 4.6]{St2}. This implies
\begin{equation*}
\tilde v _{-\alpha}(\gamma_{\rho_1})=\tilde v _{-w_{\rho_1}^{-1}\alpha}(\gamma_{\rho_1^{+}}),
\end{equation*}
where $w_{\rho_1}^{-1}$ is given by
\begin{equation*}
w_{\rho_1}^{-1}=\left ( 1,\dots,n_0,n_0+1,\dots,n \atop n_1+1,\dots,n,1,\dots,n_1 \right)\!.
\end{equation*}
Hence
\begin{equation*}
w_{\rho_1}^{-1} \Sigma_1^{+} =\{2\epsilon_i\mid 1\le i\le n_1 \}\cup \{ \epsilon_i\pm\epsilon_j \mid 1\le i<j \le n_1\}.
\end{equation*}
Since
\begin{equation*}
\gamma_{\rho_1}^+=\big(t_0t_nt^{2(n-1)}q^{n_1},\dots,t_0t_nt^{2(n-n_1)}q, t_0t_nt^{2(n_0-1)},\dots,t_0t_n\big),
\end{equation*}
by definitions of the dual multiplicity function $\tilde{\mathbf t}$, $v_\beta(x)$ and the parameters
 $\{a_1,a_2, a_3,a_4\}$ (\eqref{eq2.1}, \eqref{eq2.2}, \eqref{eq3.1}), one obtains
\begin{gather*}
\tilde v_{ - \epsilon_i+ \epsilon_j}(\gamma_{\rho_1^{+}}) = t^2 \frac{1-q^{j-i-1}t^{2(j-i)}}{1-q^{j-i}t^{2(j-i)}},
\\
\tilde v_{ - \epsilon_i- \epsilon_j}(\gamma_{\rho_1^{+}}) = t^2 \frac{1-Aq^{2n_1-i-j+1}t^{2(2n-i-j-1)}}
 {1-Aq^{2n_1-i-j+1}t^{2(2n-i-j)}},
 \\
 \tilde v_{ - 2\epsilon_i}(\gamma_{\rho_1^{+}}) =t_n^2 \frac{\big(1-q^{n_1-i+1}t^{2(n-i)}\big)\big(1-a_1a_2q^{n_1-i}t^{2(n-i)}\big)}
 {1-Aq^{2(n_1-i+1)-1}t^{4(n-i)}}.
\end{gather*}
In view of $\tilde{t}_{w_1}= t^{n_1(n_1-1)}t_{n}^{n_1}$,
these equalities conclude the proof of the desired formula~\eqref{eq4.7}.
\end{proof}

In~\eqref{eq4.7} one can apply the product formula for the Poincar\'{e} series of type C
\cite[Section 2.2]{Ma1}, which reads
\begin{align}
\sum_{w\in W_1} \tilde t_w^{-2} & = \prod_{i=1}^{n_1}\frac{\big(1-t^{-2i}\big)\big(1+t_n^{-2}t^{-2(i-1)}\big)}{1-t^{-2}} \nonumber
\\
& = t^{-2n_1(n_1-1)}(-a_3a_4)^{-n_1}\prod_{i=1}^{n_1} \frac{\big(1-t^{2i}\big)\big(1-a_3a_4t^{2(i-1)}\big)}{1-t^2}.
\label{eq4.12}
\end{align}
Note that
\begin{equation} \label{eq4.13} 
 c_{\rho_1 \rho_1}^{\dagger} = c_{\rho_1 \rho_1}.
\end{equation}

Thus far we have given an expansion of the partial antisymmetrization $U_1^{-} E_{\rho_1}(x)$
in terms of $E_{\mu}(x)$. The next step is to give an explicit formula for $U_1^{-} E_{\rho_1}(x)$.

 Put
\begin{gather*}
\chi_1(x;\mathbf t;q) = x^{\rho_1}\prod_{\alpha\in \Sigma_1^{-}}(1-x^{\alpha})v_{\alpha}\big(x;\mathbf t^{-1},q^{-1}\big)
\\ \hphantom{\chi_1(x;\mathbf t;q)}
{}= (-1)^{\frac{n_1(n_1+1)}{2}}t^{-2n_1(n_1-1)}t_n^{-2n_1}(x_{n_0+1}\cdots x_n)^{-n_1}
\\ \hphantom{\chi_1(x;\mathbf t;q)=}
 {}\times \prod_{n_0+1\le i<j\le n}\big(t^2x_i-x_j\big)\big(1-t^2x_ix_j\big)\prod_{i=n_0+1}^{n}(1-a_3x_i)(1-a_4x_i).
\end{gather*}
Then for $ f\in \mathcal A$ we have

\begin{Lemma} 
\begin{equation} \label{eq4.14} 
\big(T_i+t_i^{-1}\big)(\chi_1f)=s_i(\chi_1)(T_i-t_i)f, \qquad i=n_0+1,\dots,n.
\end{equation}
\end{Lemma}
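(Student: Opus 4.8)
The plan is to reduce the operator identity~\eqref{eq4.14} to a single scalar identity satisfied by $\chi_1$ alone, and then to verify that scalar identity directly from the explicit product expression for $\chi_1$.

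First I would substitute the Noumi representation $T_i = t_i + t_i^{-1}v_{a_i}(x)(s_i-1)$ from Section~\ref{section2} into both sides. Writing $g=\chi_1 f$ and using $s_i(\chi_1 f)=s_i(\chi_1)\,s_i(f)$, the left-hand side expands as
\begin{equation*}
\big(T_i+t_i^{-1}\big)(\chi_1 f)=\big[\big(t_i+t_i^{-1}\big)-t_i^{-1}v_{a_i}\big]\chi_1 f+t_i^{-1}v_{a_i}\,s_i(\chi_1)\,s_i(f),
\end{equation*}
while, using $(T_i-t_i)f=t_i^{-1}v_{a_i}(s_if-f)$, the right-hand side becomes
\begin{equation*}
s_i(\chi_1)\big(T_i-t_i\big)f=t_i^{-1}v_{a_i}\,s_i(\chi_1)\,s_i(f)-t_i^{-1}v_{a_i}\,s_i(\chi_1)\,f.
\end{equation*}
The terms carrying $s_i(f)$ cancel, and since all remaining operators act by multiplication it suffices to compare the coefficients of $f$. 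Thus~\eqref{eq4.14} is equivalent to the single relation
\begin{equation*}
\big(t_i^2+1\big)\chi_1=v_{a_i}\big(\chi_1-s_i\chi_1\big),\qquad i=n_0+1,\dots,n,
\end{equation*}
or, equivalently, $s_i\chi_1=\big(1-(t_i^2+1)/v_{a_i}\big)\chi_1$.

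Next I would establish this relation by computing the ratio $s_i\chi_1/\chi_1$ from the product formula for $\chi_1$ and matching it against $1-(t_i^2+1)/v_{a_i}$. For $n_0+1\le i\le n-1$ the reflection $s_i$ interchanges $x_i$ and $x_{i+1}$; the monomial prefactor, the factors $\big(1-t^2x_kx_l\big)$, and the factors $(1-a_3x_k)(1-a_4x_k)$ are all symmetric, so only the single factor $\big(t^2x_i-x_{i+1}\big)$ changes, giving $s_i\chi_1/\chi_1=\big(t^2x_{i+1}-x_i\big)/\big(t^2x_i-x_{i+1}\big)$. Here $v_{a_i}=\big(1-t^2x^{a_i}\big)/\big(1-x^{a_i}\big)$ with $t_i=t$ by~\eqref{eq2.2}, and a one-line computation shows $1-(t^2+1)/v_{a_i}$ equals the same ratio.

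The case $i=n$ is where the main work lies, since $s_n$ acts by $x_n\mapsto x_n^{-1}$. I would pair, for each $k<n$, the factors $\big(t^2x_k-x_n\big)\big(1-t^2x_kx_n\big)$, each of which transforms by $x_n^{-2}$, and combine these $n_1-1$ contributions with the factor $x_n^{2n_1}$ coming from $(x_{n_0+1}\cdots x_n)^{-n_1}$; the net monomial is $x_n^{2}$, which cancels the denominator produced by $(1-a_3x_n)(1-a_4x_n)\mapsto x_n^{-2}(x_n-a_3)(x_n-a_4)$, yielding $s_n\chi_1/\chi_1=(x_n-a_3)(x_n-a_4)/\big[(1-a_3x_n)(1-a_4x_n)\big]$. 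On the other side, using $v_{a_n}=(1-a_3x_n)(1-a_4x_n)/\big(1-x_n^2\big)$ together with the identities $a_3a_4=-t_n^2$ and $a_3+a_4=t_n\big(t_n^{\vee}-(t_n^{\vee})^{-1}\big)$ implied by~\eqref{eq3.1}, the numerator of $1-(t_n^2+1)/v_{a_n}$ collapses to exactly $(x_n-a_3)(x_n-a_4)$, completing the verification. The delicate bookkeeping in this last case — tracking the several $x_n$-dependent factors under inversion and invoking the precise parametrization of $a_3,a_4$ — is the principal obstacle; the case $i<n$ and the algebraic reduction are routine by comparison.
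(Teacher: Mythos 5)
Your proposal is correct, and its first half coincides exactly with the paper's own reduction: expanding both sides via Noumi's representation $T_i=t_i+t_i^{-1}v_{a_i}(s_i-1)$, cancelling the terms carrying $s_i(f)$, and reducing~\eqref{eq4.14} to the scalar identity $(t_i^2+1)\chi_1=v_{a_i}(\chi_1-s_i\chi_1)$. Where you genuinely diverge is in how this identity is verified. The paper argues structurally and uniformly in $i$: it writes $\chi_1=x^{\rho_1}\prod_{\alpha\in\Sigma_1^{-}}(1-x^{\alpha})\cdot C\big(x;\mathbf t^{-1},q^{-1}\big)$, notes that the first factor is $s_i$-antisymmetric, and that the ratio $s_i(C)/C$ collapses to $s_i(v_i)/v_i$ because $C$ is, up to $W_1$-invariant factors, a product of $v_{\beta}(x;\mathbf t,q)$ over $\beta\in\Sigma_1^{+}$ and $s_i$ permutes $\Sigma_1^{+}\setminus\{a_i\}$; the whole lemma then rests on the one-line identity $v_i+s_i(v_i)=t_i^2+1$, with no case distinction between $i<n$ and $i=n$. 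You instead compute the ratio $s_i\chi_1/\chi_1$ directly from the explicit product formula for $\chi_1$, treating the transposition case $n_0+1\le i\le n-1$ and the sign-change case $i=n$ separately; your bookkeeping is accurate in both cases (in particular the pairing $(t^2x_k-x_n)(1-t^2x_kx_n)\mapsto x_n^{-2}\times(\text{itself})$, the exponent count $x_n^{2n_1}\cdot x_n^{-2(n_1-1)}\cdot x_n^{-2}=1$, and the use of $a_3a_4=-t_n^2$ to factor the numerator of $1-(t_n^2+1)/v_{a_n}$ as $(x_n-a_3)(x_n-a_4)$). What the paper's route buys is brevity and uniformity, since the root-system structure does all the work at once; what your route buys is a self-contained, elementary verification that does not invoke the relation between $v_{\alpha}\big(x;\mathbf t^{-1},q^{-1}\big)$ and $v_{\alpha}(x;\mathbf t,q)$ (which the paper states rather tersely), at the cost of the hands-on analysis of the $i=n$ case.
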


\begin{proof}
Note first that
\begin{align*}
\big(T_i+t_i^{-1}\big)(\chi_1f) & =\big(t_i+t_i^{-1}-t_i^{-1}v_i+t_i^{-1}v_is_i\big)\chi_1f \\
 &= \big(t_i+t_i^{-1}-t_i^{-1}v_i\big)\chi_1f+s_i(\chi_1)t_i^{-1}v_is_i( f ) \\
 &= \big(t_i+t_i^{-1}-t_i^{-1}v_i\big)\chi_1f+s_i(\chi_1)\big( t_i^{-1}v_i f+(T_i-t_i))f \big).
\end{align*}
So it is sufficient to show
\begin{equation*}
\big(t_i+t_i^{-1}-t_i^{-1}v_i\big)\chi_1+t_i^{-1}v_is_i(\chi_1)=0.
\end{equation*}
Since
\begin{equation*}
\chi_1(x;\mathbf t;q)=x^{\rho_1}\prod_{\alpha\in \Sigma_1^{-}}(1-x^{\alpha})C\big(x;\mathbf t^{-1},q^{-1}\big)
\end{equation*}
and
\begin{equation*}
s_i\bigg(x^{\rho_1}\prod_{\alpha\in \Sigma_1^{-}}(1-x^{\alpha})\bigg)
=-x^{\rho_1}\prod_{\alpha\in \Sigma_1^{-}}(1-x^{\alpha}),
\end{equation*}
it follows that
\begin{equation*}
\frac{s_i(\chi_1)}{\chi_1}=-\frac{s_i\big(C\big(x;\mathbf t^{-1},q^{-1}\big) \big)}{C\big(x;\mathbf t^{-1},q^{-1}\big) }.
\end{equation*}
Here observe that
\begin{equation*}
C\big(x;\mathbf t^{-1},q^{-1}\big)=\prod_{\beta \in \Sigma^{+}}t^{-2\beta}v_{\beta}(x;\mathbf t,q),
\end{equation*}
so one clearly has
\begin{equation*}
\frac{s_i\big(C\big(x;\mathbf t^{-1},q^{-1}\big) \big)}{C\big(x;\mathbf t^{-1},q^{-1}\big) }=\frac{s_i(v_i)}{v_i},
\end{equation*}
where $ v_i= v_{a_i} (x;\mathbf t,q) $. It thus remains to be shown that
\begin{equation*}
\frac{s_i(v_i)}{v_i}=\frac{t_i+t_i^{-1}-t_i^{-1}v_i}{t_i^{-1}v_i},
\end{equation*}
and this is deduced from the definition of $v_i$.
\end{proof}

Let $\mathcal A_1$ denote the subalgebra of $W_1$-invariants of $\mathcal A$.
\begin{Proposition}
Let $ f\in \mathcal A$. Then $\big(T_i+t_i^{-1}\big)f=0$ for $n_0+1\le i\le n$ if and only if $f\in \chi_1\mathcal A_1$.
\end{Proposition}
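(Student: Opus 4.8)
The statement characterizes the common kernel $N=\{f\in\mathcal A:(T_i+t_i^{-1})f=0,\ n_0+1\le i\le n\}$, and I would prove the two inclusions $\chi_1\mathcal A_1\subseteq N$ and $N\subseteq\chi_1\mathcal A_1$ separately, the key tool in both cases being the identity~\eqref{eq4.14}. As a sanity check one may note that $N=U_1^{-}\mathcal A$: the inclusion $U_1^{-}\mathcal A\subseteq N$ is~\eqref{eq4.2}, while for $f\in N$ one has $T_wf=(-1)^{l(w)}t_w^{-1}f$ for every reduced $w\in W_1$, whence $U_1^{-}f=f$; this observation is not strictly needed below but situates the result.

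For the inclusion $\chi_1\mathcal A_1\subseteq N$ (the ``if'' part), let $g\in\mathcal A_1$, so that $s_ig=g$ for $i=n_0+1,\dots,n$. Since $T_i=t_i+t_i^{-1}v_i(s_i-1)$, this gives $(T_i-t_i)g=t_i^{-1}v_i(s_ig-g)=0$, and then~\eqref{eq4.14} yields $(T_i+t_i^{-1})(\chi_1g)=s_i(\chi_1)(T_i-t_i)g=0$. Hence $\chi_1g\in N$.

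For the reverse inclusion (the ``only if'' part), let $f\in N$. I would first pass to the field of fractions $K=\mathbb C(x_1,\dots,x_n)$: the operators $s_i$ and $T_i=t_i+t_i^{-1}v_i(s_i-1)$ extend to $K$, and since the proof of~\eqref{eq4.14} is purely algebraic, that identity remains valid for rational arguments. Setting $g:=f/\chi_1\in K$ we then have $0=(T_i+t_i^{-1})(\chi_1g)=s_i(\chi_1)(T_i-t_i)g$; as $s_i(\chi_1)\ne0$ and $v_i\ne0$ in $K$, this forces $(s_i-1)g=0$ for each $i=n_0+1,\dots,n$, so that $g\in K^{W_1}$ is a $W_1$-invariant rational function. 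It remains to show that $g$ is actually a Laurent polynomial, for then $g\in\mathcal A\cap K^{W_1}=\mathcal A_1$ and $f=\chi_1g\in\chi_1\mathcal A_1$.

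Proving this polynomiality is the main obstacle, and I would settle it divisorially. Since $f=\chi_1g$ is regular on the torus and each linear factor of $\chi_1$ occurs to first order, every pole of $g$ is simple and lies along one of the hypersurfaces $\{t^2x_i=x_j\}$, $\{t^2x_ix_j=1\}$, $\{a_3x_i=1\}$, $\{a_4x_i=1\}$ $(n_0<i<j\le n)$ cut out by the factors of $\chi_1$. On the other hand, $W_1$-invariance of $g$ makes its polar divisor $W_1$-stable: if $g$ has a pole along a prime divisor $D$, it has one along $w(D)$ for all $w\in W_1$, and each $w(D)$ must then also be a zero of $\chi_1$. But a single sign change $\sigma_i\in W_1$ $(x_i\mapsto x_i^{-1})$ moves each of these hypersurfaces off the zero locus of $\chi_1$: for instance $\sigma_i\{a_3x_i=1\}=\{x_i=a_3\}$, $\sigma_i\{t^2x_i=x_j\}=\{x_ix_j=t^2\}$, and $\sigma_i\{t^2x_ix_j=1\}=\{x_i=t^2x_j\}$ (the cases $\{a_4x_i=1\}$ being entirely analogous), and for generic values of the parameters none of these is among the factors of $\chi_1$. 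This contradiction shows that $g$ has no poles, hence $g\in\mathcal A$, which completes the argument.
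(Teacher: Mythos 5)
Your proof is correct and follows essentially the same route as the paper: both directions rest on the identity~\eqref{eq4.14}, the ``only if'' part deduces that $g=f/\chi_1$ is $W_1$-invariant, and the polynomiality of $g$ is then forced by the fact that $\chi_1$ and its images under the sign changes in $W_1$ share no non-monomial factor. Your divisorial analysis of the last step is just an explicit, geometric rendering of the paper's one-line claim that $\chi_1$ and $w_1(\chi_1)$ are coprime except for powers of $x_{n_0+1},\dots,x_n$.
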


\begin{proof}
If $\big(T+t_i^{-1}\big)f=0$, $n_0+1\le i\le n$, then~\eqref{eq4.14} implies that $g=\chi_1^{-1}f$ is killed by each $T_i-t_i$.
Hence $g$ is $ W_1$-symmetric, and so $ \chi_1^{-1}f=w_1\big(\chi_1^{-1}f\big)$ or $\ w_1(\chi_1)f=\chi_1 w_1(f)$
for any $w_1\in W_1$. This shows that $\chi_1$ divides $f$ in $\mathcal A$ since $ \chi_1$ and
$ w_1(\chi_1)$ are coprime except for the powers of $x_{n_0+1},\dots,x_n$.
Hence $g\in \mathcal A_1$. Conversely, if $ f=\chi_1 g$ with $g\in \mathcal A_1$, then~\eqref{eq4.14} shows that
$\big(T_i+t_i^{-1}\big)f=0$.
\end{proof}

\begin{Proposition}
\begin{equation} \label{eq4.15} 
U_1^{-} E_{\rho_1}(x)= c_{\rho_1 \rho_1} \chi_1(x;\mathbf t;q).
\end{equation}
\end{Proposition}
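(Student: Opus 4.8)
The plan is to obtain \eqref{eq4.15} in three moves: first show that $U_1^-E_{\rho_1}$ is divisible by $\chi_1$ using the characterisation of $\chi_1\mathcal A_1$ just proved, then show by a degree comparison that the quotient is a constant, and finally pin down that constant by reading off a single coefficient. To start, I would note that \eqref{eq4.2} with the lower sign says $\big(T_i+t_i^{-1}\big)U_1^-=0$ as operators on $\mathcal A$ for $n_0+1\le i\le n$ (conceptually because $U_1^-$ is the idempotent projector of \eqref{eq4.3} onto exactly this kernel), so applying it to $E_{\rho_1}$ gives $\big(T_i+t_i^{-1}\big)U_1^-E_{\rho_1}=0$ for each such $i$. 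The preceding Proposition, which identifies the common kernel of the $T_i+t_i^{-1}$ with $\chi_1\mathcal A_1$, then yields a $W_1$-invariant $g\in\mathcal A_1$ with $U_1^-E_{\rho_1}=\chi_1 g$.

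The crux is to prove that $g$ is a constant, and for this I would compare leading terms relative to a monomial order $\succ$ on $\mathcal A$ refining the dominance order $\preceq$ and compatible with $W_1$-dominance (say the order induced by a generic linear functional positive on the $W_1$-dominant chamber). Using the expansion $U_1^-E_{\rho_1}=\sum_{\mu\in W_1\rho_1}c_{\rho_1\mu}E_{\mu}$, every monomial of $U_1^-E_{\rho_1}$ is $\preceq\rho_1$: since $\rho_1$ is $W_1$-dominant it is the unique $\preceq$-maximal element of the regular orbit $W_1\rho_1$ (for $\mu\in W_1\rho_1$ one has $\rho_1-\mu\in Q^{\vee,+}$), and each $E_\mu$ contributes only monomials $\preceq\mu\preceq\rho_1$; moreover $x^{\rho_1}$ occurs solely as the leading term of $E_{\rho_1}$, so its coefficient is exactly $c_{\rho_1\rho_1}$ and $x^{\rho_1}$ is the $\succ$-leading monomial. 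On the other side, the explicit product for $\chi_1$ has $\succ$-leading monomial $x^{\rho_1}$ as well: maximising the last-block variables in turn forces the choices $t^2x_i$ from $\prod(t^2x_i-x_j)$, $-t^2x_ix_j$ from $\prod(1-t^2x_ix_j)$ and $a_3a_4x_i^2$ from $\prod(1-a_3x_i)(1-a_4x_i)$, and after the shift by $(x_{n_0+1}\cdots x_n)^{-n_1}$ this is precisely $x^{\rho_1}$, with coefficient $1$ once $a_3a_4=-t_n^2$ is combined with the prefactor $(-1)^{n_1(n_1+1)/2}t^{-2n_1(n_1-1)}t_n^{-2n_1}$. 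Because leading weights add under $\succ$, the identity $\operatorname{lw}(\chi_1 g)=\operatorname{lw}(\chi_1)+\operatorname{lw}(g)$ forces $\operatorname{lw}(g)=0$; a $W_1$-invariant Laurent polynomial whose $W_1$-dominant leading weight is $0$ has support reduced to $\{0\}$, hence $g$ is constant.

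Comparing the coefficient of $x^{\rho_1}$ on the two sides of $U_1^-E_{\rho_1}=\chi_1 g$ then gives $c_{\rho_1\rho_1}=g\cdot 1$, so $g=c_{\rho_1\rho_1}$ and \eqref{eq4.15} follows. I expect the main obstacle to be the middle paragraph: making the leading-term bookkeeping fully rigorous, and in particular verifying that $x^{\rho_1}$ is genuinely the $\succ$-maximal monomial of the product defining $\chi_1$ (no larger monomial survives, and the $x^{\rho_1}$-term arises from a unique choice with normalising constant exactly $1$). A conceptually cleaner alternative that sidesteps the combinatorics of the product is to observe, via \eqref{eq4.5}, that $M=\operatorname{span}\{E_\mu:\mu\in W_1\rho_1\}$ is a module over the finite Hecke algebra generated by $T_{n_0+1},\dots,T_n$, isomorphic to the regular representation since the orbit is regular, so its antisymmetric component $U_1^-M$ is one-dimensional; then $U_1^-E_{\rho_1}$ and $\chi_1=U_1^-\chi_1$ are proportional and one again matches $x^{\rho_1}$-coefficients, the only remaining point being to confirm $\chi_1\in M$, which is the same degree bound in disguise.
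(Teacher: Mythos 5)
Your proposal is correct and is essentially the paper's own proof: the paper likewise applies \eqref{eq4.2} together with the preceding Proposition to write $U_1^-E_{\rho_1}=\chi_1 g$ with $g\in\mathcal A_1$, then argues via leading terms that the $W_1$-invariant $g$ must be a constant, and finally equates coefficients of $x^{\rho_1}$ to identify the constant as $c_{\rho_1\rho_1}$; your version merely fills in the leading-term bookkeeping, including the useful explicit check (left implicit in the paper) that the $x^{\rho_1}$-coefficient of $\chi_1$ equals $1$. One technical repair to your middle step: no order induced by a linear functional can refine $\preceq$ globally (already in $C_2$ one has $(1,0)\prec(-1,-1)$, $(0,0)\prec(1,0)$ and $(0,0)\prec(0,1)$, which no functional can honor simultaneously), and a functional that is merely positive on the $W_1$-dominant chamber need not be dominant on the last block (it could reverse the comparison of $\rho_1$ with $\rho_1-2\epsilon_n\prec\rho_1$); however, your argument only needs a functional strictly separating $\rho_1$ from the finitely many weights $\prec\rho_1$, and $\phi=(0^{n_0},\psi)$ with $\psi_{n_0+1}>\cdots>\psi_n>0$ does this, since every weight $\preceq\rho_1$ lies in $\mathrm{conv}(W\rho_1^+)$ and such $\phi$ exposes the vertex $\rho_1$ of that orbit polytope, after which your $W_1$-invariance argument pins $g$ down exactly as written.
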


\begin{proof}
Equation~\eqref{eq4.2} implies that $\big(T_i+t_i^{-1}\big)U_1^{-} E_{\rho_1}=0$ for $n_0+1\le i \le n $,
and hence $U_1^{-} E_{\rho_1}=\chi_1g $
for some $g\in \mathcal A_1$. Note that the leading term of $U_1^{-} E_{\rho_1}$ is that of $ E_{\rho_1} $
except for its coefficient because $U_1^{-} E_{\rho_1}$ is a linear combination of $ E_{w(\rho_1)}$, $w\in W_1$.
But the leading term of~$\chi_1$ is also of the same degree. Since~$g$ is~$W_1$ invariant, it must be a constant. Finally equating
coefficients of leading terms establishes~\eqref{eq4.15}.
\end{proof}
We now calculate the norm $ \langle U_1^{-} E_{\rho_1},U_1^{-} E_{\rho_1}\rangle $.
\begin{align*}
 \langle U_1^{-} E_{\rho_1},U_1^{-} E_{\rho_1}\rangle & = \bigl<{U_1^{-}}^2 E_{\rho_1}, E_{\rho_1}\bigr> =\langle U_1^{-}
 E_{\rho_1}, E_{\rho_1}\rangle \\
 & = \biggl<\sum_{\mu\in W_1\lambda} c_{\rho_1 \mu}E_{\mu}, E_{\rho_1} \biggr> = c_{\rho_1 \rho_1} \langle E_{\rho_1}, E_{\rho_1}\rangle .
\end{align*}
Hence from~\eqref{eq4.15}
\begin{equation*}
\langle c_{\rho_1\rho_1} \,\chi_1(x;\mathbf t;q), c_{\rho_1\rho_1} \,
\chi_1(x;\mathbf t;q)\rangle = c_{\rho_1 \rho_1} \langle E_{\rho_1}, E_{\rho_1}\rangle .
 \end{equation*}
Thus, in view of~\eqref{eq4.13}, we arrive at
\begin{Theorem} \label{theorem4.6} 
\begin{equation} \label{eq4.16} 
\langle \chi_1,\chi_1\rangle
= \frac{1}{c_{\rho_1 \rho_1} } \langle E_{\rho_1}, E_{\rho_1}\rangle .
\end{equation}
\end{Theorem}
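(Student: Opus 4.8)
The plan is to read the identity off the closed form \eqref{eq4.15} for the partial antisymmetrization, combined with the algebraic properties of $U_1^-$ recorded in \eqref{eq4.2}--\eqref{eq4.4}, by evaluating the single quantity $\langle U_1^- E_{\rho_1}, U_1^- E_{\rho_1}\rangle$ in two different ways and equating the results. For the first evaluation I would work through the Hecke-algebraic structure: the self-adjointness \eqref{eq4.4} transports one copy of $U_1^-$ from the second argument into the first, and the idempotency \eqref{eq4.3} then collapses the resulting $(U_1^-)^2$ to $U_1^-$, so that $\langle U_1^- E_{\rho_1}, U_1^- E_{\rho_1}\rangle = \langle U_1^- E_{\rho_1}, E_{\rho_1}\rangle$. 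Expanding $U_1^- E_{\rho_1}$ in the basis $\{E_\mu\}_{\mu\in W_1\rho_1}$ and invoking the orthogonality of the nonsymmetric Koornwinder polynomials with respect to $\langle\,,\rangle$, so that only the $\mu=\rho_1$ term survives, this reduces to $c_{\rho_1\rho_1}\langle E_{\rho_1}, E_{\rho_1}\rangle$.

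For the second evaluation I would use \eqref{eq4.15} directly, where $U_1^- E_{\rho_1} = c_{\rho_1\rho_1}\chi_1$. Pulling the scalar out of both arguments of the $\dagger$-twisted form and using \eqref{eq4.13}, which records that $c_{\rho_1\rho_1}^\dagger = c_{\rho_1\rho_1}$, the scalar contributes a plain factor $c_{\rho_1\rho_1}^2$, giving $\langle U_1^- E_{\rho_1}, U_1^- E_{\rho_1}\rangle = c_{\rho_1\rho_1}^2\langle \chi_1,\chi_1\rangle$. Equating the two evaluations yields $c_{\rho_1\rho_1}^2\langle\chi_1,\chi_1\rangle = c_{\rho_1\rho_1}\langle E_{\rho_1},E_{\rho_1}\rangle$, and cancelling one factor of $c_{\rho_1\rho_1}$ produces \eqref{eq4.16}.

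Since the substantive work, namely proving \eqref{eq4.15} and evaluating $c_{\rho_1\rho_1}$ explicitly in \eqref{eq4.7}, is already in hand, no serious difficulty remains at this step. The one point that genuinely requires care is the treatment of the scalar under the $\dagger$-twisted bilinear form, where \eqref{eq4.13} is exactly what yields the clean factor $c_{\rho_1\rho_1}^2$ rather than a modulus; and the final cancellation is legitimate because $c_{\rho_1\rho_1}$ is a nonzero rational function of the generic parameters $q$ and $\mathbf t$.
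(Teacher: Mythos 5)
Your proposal is correct and follows essentially the same route as the paper: the paper likewise evaluates $\langle U_1^- E_{\rho_1}, U_1^- E_{\rho_1}\rangle$ once via self-adjointness \eqref{eq4.4}, idempotency \eqref{eq4.3} and orthogonality of the $E_\mu$, and once via \eqref{eq4.15} together with \eqref{eq4.13}, then cancels one factor of $c_{\rho_1\rho_1}$. Your explicit remarks on why \eqref{eq4.13} gives a clean $c_{\rho_1\rho_1}^2$ and why the cancellation is legitimate are points the paper leaves implicit, but the argument is the same.
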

\noindent
Here, by virtue of~\eqref{eq3.6}, \eqref{eq4.7} and~\eqref{eq4.12}, the right-hand side is written explicitly as
 \begin{gather}
\frac{1}{c_{\rho_1 \rho_1}} \langle E_{\rho_1}, E_{\rho_1}\rangle = t^{-2n_1(n_1-1)}(-a_3a_4)^{-n_1}
\prod_{i=1}^{n_1}\frac{\big(1-t^{2i}\big)\big(1-a_3a_4t^{2(i-1)}\big)}{1-t^2} \nonumber
\\ \hphantom{\frac{1}{c_{\rho_1 \rho_1}} \langle E_{\rho_1}, E_{\rho_1}\rangle = }
{}\times \prod_{i=1}^{n_1} \frac{\big(Aq^{2(n_1-i+1)-1}t^{4(n-i)},Aq^{2(n_1-i+1)}t^{4(n-i)}\big)_{\infty}}
 {\big(q^{n_1-i+1}t^{2(n-i)},Aq^{n_1-i+1}t^{2(n-i)}\big)_{\infty}} \nonumber
 \\ \hphantom{\frac{1}{c_{\rho_1 \rho_1}} \langle E_{\rho_1}, E_{\rho_1}\rangle = \times \prod_{i=1}^{n_1}}
{}\times \frac{\big(1-a_3a_4q^{n_1-i+1}t^{2(n-i)}\big)} {\big(1-a_1a_2q^{n_1-i}t^{2(n-i)}\big) \prod_{1\le r<s\le 4}
 \big(a_ra_sq^{n_1-i+1}t^{2(n-i)}\big)_{\infty}} \nonumber
 \\ \hphantom{\frac{1}{c_{\rho_1 \rho_1}} \langle E_{\rho_1}, E_{\rho_1}\rangle = }
 {}\times \prod_{i=n_1+1}^n \frac{ \big(At^{4(n-i)}\big)_{\infty}^2\big(1-a_3a_4t^{2(n-i)}\big)}{\big(qt^{2(n-i)},At^{2(n-i)}\big)_{\infty}
 \prod_{1\le r<s\le 4}\big(a_ra_st^{2(n-i)}\big)_{\infty}} \nonumber
 \\ \hphantom{\frac{1}{c_{\rho_1 \rho_1}} \langle E_{\rho_1}, E_{\rho_1}\rangle = }
 {}\times \prod_{1\le i<j\le n_1}
 \frac{\big(q^{j-i}t^{2(j-i)},q^{j-i+1}t^{2(j-i)}\big)_{\infty}} {\big(q^{j-i+1}t^{2(j-i+1)},q^{j-i}t^{2(j-i-1)}\big)_{\infty}} \nonumber
 \\ \hphantom{\frac{1}{c_{\rho_1 \rho_1}} \langle E_{\rho_1}, E_{\rho_1}\rangle =\times \prod_{1\le i<j\le n_1} }
{}\times \frac{\big(Aq^{2n_1-i-j+1}t^{2(2n-i-j)},Aq^{2n_1-i-j+2}t^{2(2n-i-j)}\big)_{\infty}}
 {\big(Aq^{2n_1-i-j+2}t^{2(2n-i-j+1)},Aq^{2n_1-i-j+1}t^{2(2n-i-j-1)}\big)_{\infty}} \nonumber
 \\ \hphantom{\frac{1}{c_{\rho_1 \rho_1}} \langle E_{\rho_1}, E_{\rho_1}\rangle = }
 {}\times \prod_{i=1}^{n_1} \prod_{j=n_1+1}^{n}
 \frac{\big(q^{n_1-i+1}t^{2(j-i)}\big)_{\infty}^2} {(q^{n_1-i+1}t^{2(j-i+1)},q^{n_1-i+1}t^{2(j-i-1)})_{\infty}}\nonumber
 \\ \hphantom{\frac{1}{c_{\rho_1 \rho_1}} \langle E_{\rho_1}, E_{\rho_1}\rangle = \times \prod_{i=1}^{n_1}\prod_{j=n_1+1}^{n}}
 {}\times \frac{\big(Aq^{n_1-i+1}t^{2(2n-i-j)}\big)_{\infty}^2}{\big(Aq^{n_1-i+1}t^{2(2n-i-j+1)},
 Aq^{n_1-i+1}t^{2(2n-i-j-1)}\big)_{\infty}} \label{eq4.17}
 \\ \hphantom{\frac{1}{c_{\rho_1 \rho_1}} \langle E_{\rho_1}, E_{\rho_1}\rangle = }
 {}\times \prod_{n_1+1\le i<j\le n} \frac{\big(qt^{2(j-i)},At^{2(2n-i-j)}\big)_{\infty}^2}{\big(qt^{2(j-i+1)},qt^{2(j-i-1)},
 At^{2(2n-i-j+1)},At^{2(2n-i-j-1)}\big)_{\infty}},\nonumber
 \end{gather}
and the left-hand side is
\begin{align*}
\langle \chi_1,\chi_1\rangle ={}& \frac{1}{(2\pi {\rm i})^n}\int_{ C^n} \prod_{i=n_0+1}^{n}(1-a_3x_i)(1-a_4x_i)
\big(1-a_3^{-1}x_i^{-1}\big)\big(1-a_4^{-1}x_i^{-1}\big)
\\
&\times \prod_{n_0+1\le i<j\le n}\big(t^2x_i-x_j\big)\big(1-t^2x_ix_j\big) \big(t^{-2}x_i^{-1}-x_j^{-1}\big)\big(1-t^{-2}x_i^{-1}x_j^{-1}\big)
\Delta(x) \frac{{\rm d}x}{x}.
\end{align*}
This theorem may be viewed as a variation of the Selberg-type integral formula due to Gustaf\-son~\cite{G},
in which the integral $\langle 1,1\rangle $ was evaluated.
We will prove that, by a suitable limiting procedure, the formula~\eqref{eq4.16} gives a variation of the
$q$-Selberg integral formula~\cite{As,Hab,Kad} involving a $q$-difference product with respect to part of the variables,
see Theorem~\ref{theorem7.3}.

Before proceeding to the limiting procedure, we first observe that~\eqref{eq4.17} can be simplified
using the identities
\begin{gather*}
\prod_{1\le i<j \le n} \frac{1-QT^{j-i+1}}{1-QT^{j-i}} =\prod_{i=1}^{n} \frac{1-QT^i}{1-QT}, \qquad
 \prod_{1\le i<j \le n} \frac{1-QT^{i+j+1}}{1-QT^{i+j}} = \prod_{i=1}^{n} \frac{1-QT^{2i}}{1-QT^{i+1}}.
\end{gather*}
In fact the following formulae immediately follow from these identities:
 \begin{gather*}
 \prod_{1\le i < j \le n_1}
 \frac{\big(q^{j-i}t^{2(j-i)},q^{j-i+1}t^{2(j-i)} \big)_{\infty}}{\big(q^{j-i+1}t^{2(j-i+1)},q^{j-i}t^{2(j-i-1)} \big)_{\infty}}
= \prod_{i=1}^{n_1} \frac{\big(qt^2, q^it^{2(i-1)}\big)_{\infty}}{(q^it^{2i}, q)_{\infty}} ,
\\
\prod_{n_1+1\le i< j\le n} \frac{\big(qt^{2(j-i)}\big)_{\infty}^2}{\big(qt^{2(j-i+1)},qt^{2(j-i-1)}\big)_{\infty}}
= \prod_{i=1}^{n_0} \frac{\big(qt^2, qt^{2(i-1)}\big)_{\infty}}{(qt^{2i}, q)_{\infty}} ,
\\
 \prod_{1\le i < j \le n_1} \frac{\big(Aq^{2n_1-i-j+1}t^{2(2n-i-j)},Aq^{2n_1-i-j+2}t^{2(2n-i-j)}\big)_{\infty}}
{\big(Aq^{2n_1-i-j+2}t^{2(2n-i-j+1)}Aq^{2n_1-i-j+1}t^{2(2n-i-j-1)}\big)_{\infty}}
\\ \qquad
{}= \prod_{i=1}^{n_1}\frac{\big(Aq^{i}t^{2(2n_0+i-1)},Aq^{2i-1}t^{2(2n_0+2i-3)}\big)_{\infty}} {\big(A^{2i-1}t^{4(n_0+i-1)},Aq^{i}t^{2(2n_0+i-2)}\big)_{\infty}},
\\
\prod_{n_1+1\le i < j\le n} \frac{\big(At^{2(2n-i-j)}\big)_{\infty}^2}{\big(At^{2(2n-i-j+1)},At^{2(2n-i-j-1)}\big)_{\infty}}
= \prod_{i=1}^{n_0} \frac{\big(At^{2(i-1)}\big)_{\infty}}{\big(At^{4(i-1)}\big)_{\infty}}
 \prod_{i=1}^{n_0}\frac{\big(At^{2(2i-3)}\big)_{\infty}}{\big(At^{2(i-2)}\big)_{\infty}}
 \\ \qquad
{} =\frac{\big(At^{2(n_0-1)}\big)_{\infty}}{(A)_{\infty}}\prod_{i=2}^{n_0}
\frac{\big(At^{2(2i-3)}\big)_{\infty}}{\big(At^{4(i-1)}\big)_{\infty}}
=\frac{\big(At^{2(n_0-1)}\big)_{\infty}}{\big(At^{2(2n_0-1)}\big)_{\infty}}\prod_{i=1}^{n_0}
\frac{\big(At^{2(2i-1)}\big)_{\infty}}{\big(At^{4(i-1)}\big)_{\infty}}.
\end{gather*}
We rewrite the product $\prod_{i=n_1+1}^{n} $ in~\eqref{eq4.17} as the product $\prod_{i=1}^{n_0} $. The product
$\prod_{i=1}^{n_1} \prod_{j=n_1+1}^{n} $ also can be rewritten as the product $\prod_{i=1}^{n_1}$:
\begin{gather*}
 \prod_{i=1}^{n_1} \prod_{j=n_1+1}^{n}
\frac{\big(q^{n_1-i+1}t^{2(j-i)}\big)_{\infty}^2} {\big(q^{n_1-i+1}t^{2(j-i+1)},q^{n_1-i+1}t^{2(j-i-1)}\big)_{\infty}}
\\ \qquad
{} = \prod_{i=1}^{n_1}
 \frac{\big(q^{n_1-i+1}t^{2(n_1+1-i)},q^{n_1-i+1}t^{2(n-i)}\big)_{\infty}} {\big(q^{n-i+1}t^{2(n-i+1)},q^{n_1-i+1}t^{2(n_1-i)}\big)_{\infty}}
 = \prod_{i=1}^{n_1}
 \frac{\big(q^{i}t^{2i},q^{i}t^{2(n_0+i-1)}\big)_{\infty}} {\big(q^{i}t^{2(i-1)},q^{i}t^{2(n_0+i)}\big)_{\infty}},
 \end{gather*}
and
\begin{gather*}
\prod_{i=1}^{n_1} \prod_{j=n_1+1}^{n} \frac{\big(Aq^{n_1-i+1}t^{2(2n-i-j)}\big)_{\infty}^2}
{\big(Aq^{n_1-i+1}t^{2(2n-i-j+1)},Aq^{n_1-i+1}t^{2(2n-i-j-1)}\big)_{\infty}}
\\ \qquad
{}= \prod_{i=1}^{n_1} \prod_{j=1}^{n_0} \frac{\big(Aq^{i}t^{2(n_0+i+j-2)}\big)_{\infty}^2}
{\big(Aq^{i}t^{2(n_0+i+j-1)},Aq^{i}t^{2(n_0+i+j-3)}\big)_{\infty}}
\\ \qquad
{}=\prod_{i=1}^{n_1} \frac{\big(Aq^{i}t^{2(n_0+i-1)},Aq^{i}t^{2(2n_0+i-2)} \big)_{\infty}}
{\big(Aq^{i}t^{2(n_0+i-2)},Aq^{i}t^{2(2n_0+i-1)}\big)_{\infty}}.
 \end{gather*}
Substituting these formulae into~\eqref{eq4.17}, after some cancellations, we obtain
\begin{equation} \label{eq4.18} 
\frac{1}{c_{\rho_1 \rho_1} } \langle E_{\rho_1}, E_{\rho_1}\rangle =F(a_1,a_2, a_3,a_4)\,G(a_1,a_2,a_3,a_4),
\end{equation}
where
\begin{gather}
F(a_1,a_2, a_3,a_4) \nonumber
\\ \qquad
 {}=(-a_3a_4)^{-n_1}\prod_{i=1}^{n_1}
\frac{\big(1-a_3a_4t^{2(i-1)}\big)\big(1-a_3a_4q^{{n_1-i+1}}t^{2(n-i)}\big)} {1-a_1a_2q^{{n_1-i}}t^{2(n-i )} } \prod_{i=1}^{n_0}\big(1-a_3a_4t^{2(i-1)}\big) \nonumber
 \\ \qquad
{}=\prod_{i=1}^{n_1}\frac{\big(1-a_3a_4t^{2(i-1)}\big)\big(1-a_3a_4q^{{i}}t^{2(n_0+i-1)}\big)} {-a_3a_4+Aq^{{i-1}}
t^{2(n_0+i-1 )} } \prod_{i=1}^{n_0}\big(1-a_3a_4t^{2(i-1)}\big),\label{eq4.19}
\end{gather}
and
\begin{align*}
 G(a_1,a_2,a_3,a_4)
 ={}& t^{-2n_1(n_1-1)}\frac{\big(qt^2\big)_{\infty}^n}{(q)_{\infty}^n} \prod_{i=1}^{n_0} \frac{1}{\big(qt^{2i}\big)_{\infty}}
\prod_{i=1}^{n_1} \frac{1-t^{2i}}{\big(1-t^2\big)\big(q^i t^{2(n_0+i)}\big)_{\infty}}
\\[1ex]
&\times \frac{\big(At^{2(n_0-1)}\big)_{\infty}}{\big(At^{2(2n_0-1)}\big)_{\infty}}\prod_{i=1}^{n_0}
\frac{\big(At^{2(2i-1)},At^{4(i-1)}\big)_{\infty}}
{\big(At^{2(i-1)}\big)_{\infty} \prod_{1\le r<s\le 4}\big(a_ra_st^{2(i-1)}\big)_{\infty}}
\\[1ex]
&\times \prod_{i=1}^{n_1}\frac{\big(Aq^{2i-1}t^{2(2n_0+2i-3)},Aq^{2i}t^{4(n_0+i-1)}\big)_{\infty}}
{\big(Aq^{i}t^{2(n_0+i-2)}\big)_{\infty}\prod_{1\le r<s\le 4}
 \big(a_ra_sq^{i}t^{2(n_0+i-1)}\big)_{\infty} }.
\end{align*}
 But since
 \begin{gather*} \frac{\big(At^{2(n_0-1)}\big)_{\infty}}{\big(At^{2(2n_0-1)}\big)_{\infty}} \prod_{i=1}^{n_0}\frac{\big(At^{2(2i-1)},At^{4(i-1)}\big)_{\infty}}
{\big(At^{2(i-1)}\big)_{\infty} }= \prod_{i=1}^{n_0} \big(At^{2(n_0+i-2)}\big)_{\infty},
\\
 \prod_{i=1}^{n_1}\big(Aq^{2i-1}t^{2(2n_0+2i-3)},Aq^{2i}t^{4(n_0+i-1)}\big)_{\infty}
=\prod_{i=1}^{2n_1} \big(Aq^i t^{2(2n_0+i-2)}\big)_{\infty},
\end{gather*}
we find that
 \begin{align}
 G(a_1,a_2,a_3,a_4)
= {}&t^{-2n_1(n_1-1)}\frac{(qt^2)_{\infty}^n}{(q)_{\infty}^n} \prod_{i=1}^{n_0} \frac{1}{(qt^{2i})_{\infty}}
\prod_{i=1}^{n_1} \frac{1-t^{2i}}{(1-t^2)\big(q^i t^{2(n_0+i)}\big)_{\infty}} \nonumber
\\
& \times \prod_{i=1}^{n_0}
\frac{\big(At^{2(n_0+i-2)}\big)_{\infty}}
{ \prod_{1\le r<s\le 4}\big(a_ra_st^{2(i-1)}\big)_{\infty}} \nonumber
\\
&\times\frac{\prod_{i=1}^{2n_1} \big(Aq^i t^{2(2n_0+i-2)}\big)_{\infty}}{\prod_{i=1}^{n_1}
\big(Aq^{i}t^{2(n_0+i-2)}\big)_{\infty}\prod_{1\le r<s\le 4}
 \big(a_ra_sq^{i}t^{2(n_0+i-1)}\big)_{\infty} }. \label{eq4.20}
\end{align}
We have thus established the formula~\eqref{eq4.16} in a simplified form:
 \begin{gather}
 \frac{1}{(2\pi {\rm i})^n}\int_{ C^n} \prod_{i=n_0+1}^{n}(1-a_3x_i)(1-a_4x_i)
\big(1-a_3^{-1}x_i^{-1}\big)\big(1-a_4^{-1}x_i^{-1}\big) \nonumber
\\ \qquad
 {}\times \prod_{n_0+1\le i<j\le n}\big(t^2x_i-x_j\big)\big(1-t^2x_ix_j\big) \big(t^{-2}x_i^{-1}-x_j^{-1}\big)\big(1-t^{-2}x_i^{-1}x_j^{-1}\big)
\Delta(x) \frac{{\rm d}x}{x} \nonumber
\\ \qquad
{} =t^{-2n_1(n_1-1)}(-a_3a_4)^{-n_1}\frac{\big(qt^2\big)_{\infty}^n}{(q)_{\infty}^n}
 \prod_{i=1}^{n_0} \frac{1}{\big(qt^{2i}\big)_{\infty}}
\prod_{i=1}^{n_1} \frac{1-t^{2i}}{\big(1-t^2\big)\big(q^i t^{2(n_0+i)}\big)_{\infty}} \nonumber
\\ \qquad\hphantom{=}
{}\times \prod_{i=1}^{n_0}\big(1-a_3a_4t^{2(i-1)}\big)
 \prod_{i=1}^{n_1}
 \frac{\big(1-a_3a_4t^{2(i-1)}\big)\big(1-a_3a_4q^{{n_1-i+1}}t^{2(n-i)}\big)} {1-a_1a_2q^{{n_1-i}}t^{2(n-i )}\nonumber }
\\ \qquad\hphantom{=}
{}\times \prod_{i=1}^{n_0} \frac{\big(At^{2(n_0+i-2)}\big)_{\infty}}
{ \prod_{1\le r<s\le 4}\big(a_ra_st^{2(i-1)}\big)_{\infty}}\nonumber
\\ \qquad\hphantom{=}
{}\times\frac{\prod_{i=1}^{2n_1} \big(Aq^i t^{2(2n_0+i-2)}\big)_{\infty}}{\prod_{i=1}^{n_1}
\big(Aq^{i}t^{2(n_0+i-2)}\big)_{\infty}\prod_{1\le r<s\le 4}
 \big(a_ra_sq^{i}t^{2(n_0+i-1)}\big)_{\infty} }.\label{eq4.21}
\end{gather}

\section{Limit of the quadratic norm formula: Part 1} \label{section5}
 In this section we calculate the limit of the left-hand side of the quadratic norm formula~\eqref{eq4.21}.

Hereafter we assume that
 \begin{equation*}
 t^2= q^k, \qquad k \in \mathbb Z_{\ge 0}.
 \end{equation*}
Put
 \begin{equation*}
 \begin{split}
& \text{\emph{Case} I:} \quad\ \ (a_1,a_2,a_3,a_4)
 = \big(\epsilon^{-1}q^{\frac{1}{2}},-aq^{\frac{1}{2}},\epsilon bq^{\frac{1}{2}}, -q^{\frac{1}{2}}\big),
 \\
& \text{\emph{Case} II:} \quad \ (a_1,a_2,a_3,a_4)
 = \big( \epsilon bq^{\frac{1}{2}} ,-aq^{\frac{1}{2}},\epsilon^{-1}q^{\frac{1}{2}}, -q^{\frac{1}{2}}\big),
\end{split}
 \end{equation*}
 where $0<|a|, |b|<1$. We shall also write $a=q^{\alpha}$, $b=q^{\beta}$. First $\epsilon$ is taken so that
 $|a_i|<1$, $i=1,2,3,4$ and then,
after multiplying the norm formula~\eqref{eq4.16} by a suitable factor, we take the limit $ \epsilon \to 0$,
or more precisely we take a suitable sequence $\{\epsilon_r\}$ which converges to $0$.
For this purpose, define
 \begin{equation*}
 \epsilon_r=\epsilon_0 q^r, \qquad r\ge 0.
 \end{equation*}
 We assume $q^{\frac{1}{2}}<|\epsilon_0|<q^{-\frac{1}{2}}$ so that, for $0\le s \le r-1$, one has
 \begin{equation} \label{eq5.1} 
 q^{r+\frac{1}{2}}< |\epsilon_r |<q^{s+\frac{1}{2}}.
 \end{equation}

Let us consider the integrand $ \chi_1(x) \chi_1(x) ^{\dagger} \Delta(x)\frac{1}{x} $.
We put
\begin{gather*}
 g(x) = \prod_{i=n_0+1}^{n} (1-a_3x_i)(1-a_4x_i)\big(1-a_3^{-1}x_i^{-1}\big)\big(1-a_4^{-1}x_i^{-1}\big)\\
 \hphantom{g(x) =}{}\times
 \prod_{i=1}^n (x_i-a_3)(x_i-a_4)\big(x_i^{-2}-1\big),
 \\
h(x) = \prod_{n_0+1\le i<j\le n}\big(t^2x_i-x_j\big)\big(1-t^2x_ix_j\big) \big(t^{-2}x_i^{-1}-x_j^{-1}\big)\big(1-t^{-2}x_i^{-1}x_j^{-1}\big)
\\ \hphantom{h(x) =}
{}\times\prod_{1\leq i<j \leq n}\big(t^2-x_ix_j\big)\big(t^2-x_ix_j^{-1}\big)\big(1-x_i^{-1}x_j\big)\big(1-x_i^{-1}x_j^{-1}\big),
 \\
 D(x) = \prod_{1\leq i<j \leq n}\frac{\big(qx_ix_j,qx_ix_j^{-1},qx_i^{-1}x_j,qx_i^{-1}x_j^{-1}\big)_{\infty}}
 {\big(t^2x_ix_j,t^2x_ix_j^{-1},t^2x_i^{-1}x_j,t^2x_i^{-1}x_j^{-1}\big)_{\infty}}
 \\ \hphantom{ D(x)}
{} = \prod_{1\leq i<j \leq n} \big(qx_ix_j,qx_ix_j^{-1},qx_i^{-1}x_j,qx_i^{-1}x_j^{-1}\big)_{k-1},
 \\
 A(x_i) = \frac{\big(qx_i^2,qx_i^{-2}\big)_{\infty}}{x_i\prod_{r=1}^4\big(a_rx_i,a_rx_i^{-1}\big)_{\infty}},
\end{gather*}
so that
\begin{equation*}
 \chi_1(x) \chi_1(x) ^{\dagger} \Delta(x)\frac{1}{x}=g(x)h(x) D(x) \prod_{i=1}^{n}A(x_i).
\end{equation*}
In either case, \emph{Case} I or \emph{Case} II, one has
\begin{equation*}
A(x_i)=\frac{\big(qx_i,-qx_i,q^{\frac{1}{2}}x_i,qx_i^{-1},-qx_i^{-1}, q^{\frac{1}{2}}x_i^{-1}\big)_{\infty}}
{x_i\big(\epsilon^{-1}q^{\frac{1}{2}}x_i, \epsilon^{-1}q^{\frac{1}{2}}x_i^{-1},
-aq^{\frac{1}{2}}x_i,-aq^{\frac{1}{2}}x_i^{-1},\epsilon bq^{\frac{1}{2}}x_i, \epsilon bq^{\frac{1}{2}}x_i^{-1} \big)_{\infty}}.
\end{equation*}
Note that our integrand is a product of the Laurent polynomial $ g(x)h(x) D(x) $ and functions of one variable
 $\prod_{i=1}^{n}A(x_i)$.

For $\epsilon=\epsilon_r$, $A(x_i)$ has a non-isolated singularity at the origin and has poles
at the points
\begin{equation*}
x_{r,s}=\epsilon_r^{-1} q^{s+\frac{1}{2}}, \qquad
y_{r,s'} = \epsilon_{r} bq^{s'+\frac{1}{2}}, \qquad
z_{s''} =-a q^{s''+\frac{1}{2}}
\end{equation*}
and the reciprocals of these points, where $s,s',s'' \in \mathbb Z_{\ge 0}$.
We assume that $\epsilon_0 $ is generic so that $ x_{r,s},\dots, z_{s''}^{-1} $
are distinct and do not lie on the unit circle, so that the poles of each $A_i(x)$
are all simple. Note also that, by \eqref{eq5.1}, $x_{0,s}$ (resp.\ $x_{0,s}^{-1}$) lies in the interior
(resp.\ exterior) of the unit circle.

When we take the limit $\epsilon_r \to 0$, some poles may move from the interior (resp.\ exterior) to
the exterior (resp.\ exterior) of the unit circle. More precisely, while $y_{r,s'}$ and~$z_{s''}$
(resp.\ $y_{r,s''}^{-1}$ and~$z_{s''}^{-1}$)
 always lie in the interior (resp.\ exterior) of the unit circle, $x_{r,s}$ (resp.\ $ x_{r,s}^{-1} $) moves from
 the interior (resp.\ exterior) to the exterior (resp.\ exterior) of
the unit circle as $r$ increases. In~fact $ {x_{r,s}}'s$ (resp.\ $ {x_{r,s}^{-1}}'s$) lie in the interior (resp.\ exterior)
of the unit circle for $0\le s \le r-1$ in view of~\eqref{eq5.1}. Hence we must deform the unit circle in
each coordinate complex plane to include $ x_{r,s}$ and exclude $ x_{r,s}^{-1} $
 keeping $y_{r,s'}$, $z_{s''}$ inside and $y_{r,s'}^{-1}$, $z_{s''}^{-1}$ outside.
Accordingly, we define the deformed contour $C_{r}$ for $r$ as a closed curve enclosing the origin,
$x_{r,s}$, $y_{r,s'}$, $z_{s''}$ and excluding $x_{r,s}^{-1}$, $y_{r,s'}^{-1}$, $z_{s''}^{-1}$ for any
$ s,s', s'' \in \mathbb Z_{\ge 0}$. Let $C_{r,1}$ (resp.\ $C_{r,-1}$) be a positively oriented circle encircling only
$x_{r,s} $ (resp.\ $x_{r,s}^{-1}$) with $0\le s \le r-1$. Then in each coordinate plane one has
 \begin{equation*}
 C_{r}=C +C_{r,1}-C_{r,-1}
 \end{equation*}
as the path of integration. Let us write
\begin{equation*}
\mathrm{sgn}(x)=
\begin{cases}
\phantom{-}1, & x\geq 0, \\
-1, & x<0.
\end{cases}
\end{equation*}
We will consider the limit of the product of a suitable factor and the integral
\begin{equation} \label{eq5.2} 
\frac{}{}\int_{C_r^n} \chi_1(x) \chi_1(x) ^{\dagger} \Delta(x) \frac{{\rm d}x}{x}
=\sum_{
\substack{\kappa_i=0,\pm1 \\
i=1,\dots,n} }
\prod_{i=1}^n \mathrm{sgn}(\kappa_i)\int_{C_{r,\kappa_1}\times \cdots \times C_{r,\kappa_n}}
 \chi_1(x) \chi_1(x) ^{\dagger} \Delta(x) \frac{{\rm d}x}{x},
 \end{equation}
where $ C_{r,0}$ should be understood as the unit circle $C$. We shall prove later that the terms with
$\kappa_i=0$ for some $ i $ is negligible in the limit calculation.

It is immediate to see that
\begin{equation*}
\Res\limits_{x_i=x_{r,s}^{-1}} A(x_i) = -\Res\limits_{x_i=x_{r,s}}A(x_i),
\end{equation*}
and hence
\begin{equation*}
\mathrm{sgn}(\kappa_i)\Res\limits_{x_i=x_{r,s}^{\kappa_i}} A(x_i) = \Res\limits_{x_i=x_{r,s}} A(x_i).
\end{equation*}
Therefore, for $\kappa_i=\pm1$, $i=1,\dots,n$, we have
 \begin{gather} 
\frac{1}{(2\pi {\rm i})^n}\prod_{i=1}^n \mathrm{sgn}(\kappa_i)
\int_{C_{r,\kappa_1}\times \cdots \times C_{r,\kappa_n}}
 \chi_1(x) \chi_1(x) ^{\dagger} \Delta(x) \frac{{\rm d}x}{x}\nonumber
 \\ \qquad
 {}=\sum_{\substack{0\le s_i\le r-1 \nonumber
 \\
 i=1,\dots,n}} g\big(x_{r,s_1}^{\kappa_1}, \dots, x_{r,s_n}^{\kappa_n} \big)
 h\big(x_{r,s_1}^{\kappa_1}, \dots, x_{r,s_n}^{\kappa_n} \big)
 D\big(x_{r, s_1}^{\kappa_1},\dots,x_{r,s_n}^{\kappa_n}\big)\nonumber
 \\ \qquad\hphantom{=}
{}\times\prod_{i=1}^{n} \Res\limits_{x_i=x_{r,s_i}} A(x_i).\label{eq5.3}
\end{gather}
Here the residue $ \Res \limits_{x_i=x_{r,s}} A(x_i) $ can be easily shown to be
\begin{gather} 
 \Res\limits_{x_i=x_{r,s}} A(x_i)\nonumber
 \\ \qquad
{} = \frac{(-1)^sq^{\frac{s(s+1)}{2}}}{(q)_{\infty}(q)_{s}}
 \frac{\big(\epsilon_r^{-1}q^{s+\frac{3}{2}}, -\epsilon_r^{-1}q^{s+\frac{3}{2}}
 ,\epsilon_r^{-1}q^{s+1} ,\epsilon_rq^{-s+\frac{1}{2}} , -\epsilon_rq^{-s+\frac{1}{2}} , \epsilon_rq^{-s} \big)_{\infty}}
{\big(\epsilon_r^{-2}q^{s+1}, -\epsilon_r^{-1}a q^{s+1} , -\epsilon_r a q^{-s} , bq^{s+1} , \epsilon_{r}^2 bq^{-s}\big)_{\infty}}.\label{eq5.4}
\end{gather}
Since $D(x)$ is $W$-invariant, we have
\begin{align} 
D\big(x_{r, s_1}^{\kappa_1},\dots,x_{r,s_n}^{\kappa_n}\big)& =D(x_{r, s_1},\dots,x_{r,s_n}) \nonumber
\\
& = \prod_{1\le i < j\le n}\big( \epsilon_r^{-2}q^{s_i+s_j+2} , q^{s_i-s_j+1}, q^{-s_i+s_j+1}, \epsilon_r^2q^{-s_i-s_j}\big)_{k-1}.\label{eq5.5}
\end{align}
For the calculation of the limit of the norm, we first consider the limits of $D(x)$, $h(x)$ with $x=(x_{r, s_1}^{\kappa_1},
\dots,x_{r,s_n}^{\kappa_n})$ and $\Res\limits_{x_i=x_{r,s_i}} A(x_i) $.
Hereafter we denote by $K$ a positive constant independent of $ r$ and $s$.
For $\kappa=(\kappa_1,\dots,\kappa_n)$, put
$d(\kappa)=\sum_{1\le i< j\le n}(1-\kappa_i)=\frac{n(n-1)}{2}-\sum_{i=1}^n(n-i)\kappa_i$.
\begin{Lemma} \label{lemma5.1} 
The following inequalities hold:
\begin{gather}
\big|\epsilon_r^{n(n-1)(k-1)}D\big(x_{r, s_1}^{\kappa_1},\dots,x_{r,s_n}^{\kappa_n}\big)\big|
 \le K, \notag \\
\big|\epsilon_{r}^{n(n-1)+n_1(n_1-1)}h\big(x_{r, s_1}^{\kappa_1},\dots,x_{r,s_n}^{\kappa_n}\big)\big| \le K.
 \label{eq5.6} 
\end{gather}
Moreover we have
\begin{gather*}
\lim_{r\to \infty} \epsilon_r^{n(n-1)(k-1)}D\big(x_{r, s_1}^{\kappa_1},\dots,x_{r,s_n}^{\kappa_n}\big)
\\ \qquad
{}=(-1)^{(k-1)\binom{n}{2}}q^{(n-1)(k-1)(\Sigma_{i=1}^ns_i)+\frac{(k-1)(k+2)}{2} \binom{n}{2}}\prod_{1\le i < j\le n}
\big(q^{s_i-s_j+1}, q^{-s_i+s_j+1}\big)_{k-1},
 \end{gather*}
and
\begin{gather} 
 \lim_{r\to \infty}\,\epsilon_r^{n(n-1)+n_1(n_1-1)} h\big(x_{r,s_1}^{\kappa_1},\dots,x_{r,s_n}^{\kappa_n}\big)\nonumber
 \\ \qquad
{} =q^{\binom{n_1}{2}+\binom{n}{2}}\,t^{2d(\kappa)}
\prod_{1\le i<j\le n}\big(q^{s_i}-t^{2\kappa_i}q^{s_j}\big)\big(q^{s_i}-q^{s_j}\big)
 \prod_{n_0+1\le i<j\le n}\big(t^{2\kappa_i}q^{s_i}-q^{s_j}\big)^2.\label{eq5.7}
 \end{gather}
In particular,
\begin{gather*}
\lim_{r\to \infty}\epsilon_{r}^{n(n-1)+n_1(n_1-1)}h\big(x_{r, s_1},
\dots,x_{r,s_{n_0}},x_{r,s_{n_0+1}}^{-1},\dots,x_{r,s_n}^{-1}\big)
\\ \qquad
{} =q^{\binom{n_1}{2}+\binom{n}{2}} t^{n_1(n_1-1)}\!\!\!\!\! \prod_{n_0+1\le i<j\le n}\!\!\!\!
\big(t^{-2}q^{s_i}\!-q^{s_j}\big)\big(q^{s_i}\!-t^{-2}q^{s_j}\big)
\!\!\prod_{1\le i<j\le n}\!\!\big(q^{s_i}\!-q^{s_j}\big)\big(q^{s_i}\!-t^2q^{s_j}\big),\nonumber
\end{gather*}
and
\begin{gather*}
\lim_{r\to \infty}\epsilon_{r}^{n(n-1)+n_1(n_1-1)}h\big(x_{r, s_1}^{-1},
\dots,x_{r,s_{n_0}}^{-1},x_{r,s_{n_0+1}},\dots,x_{r,s_n}\big)
\\ \qquad
{} =q^{\binom{n_1}{2}+\binom{n}{2}} t^{n_0(n_0-1)+2n_0n_1}\!\!\!\!\!\!\!\prod_{n_0+1\le i<j\le n}
\!\!\!\!\!\!\big(t^2 q^{s_i}\!-\!q^{s_j}\big)\big(q^{s_i}\!-\!t^2 q^{s_j}\big)
\!\!\! \prod_{1\le i<j\le n}\!\!\big(q^{s_i}\!-\!q^{s_j}\big)\big(t^2q^{s_i}\!-\!q^{s_j}\big).
\end{gather*}
\end{Lemma}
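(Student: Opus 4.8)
The plan is to substitute the pole locations $x_i=x_{r,s_i}^{\kappa_i}$, where $x_{r,s_i}=\epsilon_r^{-1}q^{s_i+1/2}$ and $\epsilon_r=\epsilon_0q^r$, into each factor of the integrand and extract the leading behaviour as $r\to\infty$. The two structural facts I would use throughout are: (i)~that $x_{r,s}^{\kappa}=\epsilon_r^{-\kappa}q^{\kappa(s+1/2)}$ has $\epsilon_r$-order $-\kappa$, so the $\epsilon_r$-scaling of any monomial is read off from $\kappa$; and (ii)~that by~\eqref{eq5.1}, since $0\le s_i\le r-1$, both $|\epsilon_r\,x_{r,s}|=q^{s+1/2}\le q^{1/2}$ and $|x_{r,s}^{-1}|=|\epsilon_0|q^{r-s-1/2}\le|\epsilon_0|q^{1/2}$ are bounded uniformly in $r$ and $s$. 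The whole argument is organised pair by pair: for each index pair the divergent $\epsilon_r$-power is fixed, and the potentially growing (in $s$) contributions cancel \emph{within} that pair, leaving a bounded per-pair leading coefficient. This is what makes the bounds uniform in $s$ and gives the limits at the same time.

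For $D$ I would start from the $W$-invariant form~\eqref{eq5.5}. Per pair, the two pure symbols $(q^{s_i-s_j+1})_{k-1}$ and $(q^{-s_i+s_j+1})_{k-1}$ carry no $\epsilon_r$, and $(\epsilon_r^2q^{-s_i-s_j})_{k-1}\to 1$, while
\[
\epsilon_r^{2(k-1)}\big(\epsilon_r^{-2}q^{s_i+s_j+2}\big)_{k-1}=\prod_{l=0}^{k-2}\big(\epsilon_r^2-q^{s_i+s_j+2+l}\big)\longrightarrow \prod_{l=0}^{k-2}\big({-}q^{s_i+s_j+2+l}\big).
\]
This shows each pair contributes exactly $\epsilon_r^{-2(k-1)}$, so $\epsilon_r^{n(n-1)(k-1)}=\epsilon_r^{2(k-1)\binom n2}$ is the right normalisation; collecting the sign and the $q$-exponent $(k-1)(s_i+s_j+2)+\binom{k-1}2$ over all $\binom n2$ pairs, using $\sum_{i<j}(s_i+s_j)=(n-1)\sum_i s_i$ and $2(k-1)+\binom{k-1}2=\frac{(k-1)(k+2)}2$, yields the stated limit. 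For the first inequality in~\eqref{eq5.6} I would note that $\big|\epsilon_r^2-q^{s_i+s_j+2+l}\big|\le q^{s_i+s_j+2+l}\big(1+|\epsilon_0|^2q^{\,2r-s_i-s_j-2-l}\big)$ with exponent $2r-s_i-s_j-2-l\ge-(k-2)$ (because $s_i+s_j\le 2(r-1)$ and $l\le k-2$); the resulting decay $q^{(k-1)(s_i+s_j)}$ dominates the growth of $(q^{s_i-s_j+1})_{k-1}(q^{-s_i+s_j+1})_{k-1}$, since $q^{(k-1)(s_i+s_j)}\big|(q^{s_i-s_j+1})_{k-1}(q^{-s_i+s_j+1})_{k-1}\big|$ is bounded uniformly in $s_i,s_j\ge0$.

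For $h$ I would factor the two products over index pairs and compute each pair's leading coefficient. The $\epsilon_r$-order of a binomial $c\,x^{\mu}-c'x^{\mu'}$ is the minimum of the orders of its two monomials; a one-line case check over $\kappa_i,\kappa_j\in\{\pm1\}$ shows that a pair in $\prod_{1\le i<j\le n}$ and a pair in $\prod_{n_0+1\le i<j\le n}$ each contribute order $-|\kappa_i+\kappa_j|-|\kappa_i-\kappa_j|=-2$, independently of $\kappa$, so the total order is $-n(n-1)-n_1(n_1-1)$ and fixes the normalisation in~\eqref{eq5.6} and~\eqref{eq5.7}. The key simplification is that, within each pair, the factor scaling like $\epsilon_r^{-2}$ contributes a coefficient $q^{\,s_i+s_j+\text{const}}$ which exactly cancels the $q^{\pm(s_i-s_j)}$ produced by rewriting the order-$0$ binomials such as $t^2-x_ix_j^{-1}=-q^{-s_j}(q^{s_i}-t^2q^{s_j})$; after this cancellation the per-pair leading coefficient is a product of the \emph{bounded} binomials $q^{s_i}-t^{2\kappa_i}q^{s_j}$, $q^{s_i}-q^{s_j}$, $t^{2\kappa_i}q^{s_i}-q^{s_j}$ times bounded powers of $q$ and $t$. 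Since the correction factors are of the form $1-(\text{uniformly small})$ by~(ii), this single computation gives both~\eqref{eq5.7} and the second inequality in~\eqref{eq5.6}; the power $t^{2d(\kappa)}$ records the number $d(\kappa)=\sum_{i<j}(1-\kappa_i)$ of variables sent to $0$, and the global factor is $q^{\binom{n_1}2+\binom n2}$.

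The two displayed special cases are just~\eqref{eq5.7} evaluated at $\kappa=(1^{n_0},(-1)^{n_1})$ and $\kappa=((-1)^{n_0},1^{n_1})$, after redistributing powers of $t$ between $t^{2d(\kappa)}$ and the binomials via $t^{-2}q^{s_i}-q^{s_j}=t^{-2}(q^{s_i}-t^2q^{s_j})$ and its companions. I expect the main obstacle to be the assembly of~\eqref{eq5.7}: correctly tracking the sign, the exact power $t^{2d(\kappa)}$, and deciding factor by factor which monomial dominates across every sign pattern $\kappa$. By contrast the $D$ computation is mechanical once~\eqref{eq5.5} is in hand, its only subtlety being the explicit compensation of the growing $q$-factorials by the decay $q^{(k-1)(s_i+s_j)}$.
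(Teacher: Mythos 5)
Your proposal is correct and takes essentially the same route as the paper: both arguments substitute the pole coordinates $x_{r,s_i}^{\kappa_i}$ and work pair by pair, reading off an $\epsilon_r$-order of $-2(k-1)$ per pair for $D$ from \eqref{eq5.5} and $-2$ per pair for $h$ (independently of $\kappa$), computing per-pair limits, and obtaining the two special cases by evaluating \eqref{eq5.7} at $\kappa=(1^{n_0},(-1)^{n_1})$ and $\kappa=((-1)^{n_0},1^{n_1})$ after redistributing powers of $t$. Your write-up additionally supplies the uniform-in-$s$ estimates (the decay $q^{(k-1)(s_i+s_j)}$ compensating the growth of $(q^{\pm(s_i-s_j)+1})_{k-1}$, and the uniformly small correction factors) that the paper dismisses as ``immediate'' and ``clear''.
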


\begin{proof}
The inequality and the limit formula for $D(x_{r, s_1}^{\kappa_1},\dots,x_{r,s_n}^{\kappa_n}) $
are immediate from~\eqref{eq5.5}.
The inequality~\eqref{eq5.6} is also clear from the definition of $h(x)$. For the limit~\eqref{eq5.7}, observe that
\begin{gather*}
 \lim_{r\to \infty} \epsilon_r^{2} \big(x_{r,s_i}^{\kappa_i}x_{r,s_j}^{\kappa_j}-t^2\big)
\big(x_{r,s_i}^{\kappa_i}x_{r,s_j}^{-\kappa_j}-t^2\big)\big(x_{r,s_i}^{-\kappa_i}x_{r,s_j}^{\kappa_j}-1\big)
\big(x_{r,s_i}^{- \kappa_i}x_{r,s_j}^{-\kappa_j}-1\big)
\\ \qquad
{}=qt^{1-\kappa_i} \big(t^{1-\kappa_i}q^{s_i}-t^{1+\kappa_i}q^{s_j}\big)\big(q^{s_i}-q^{s_j}\big),
\\
 \lim_{r\to \infty} \epsilon_r^{2}\big(t^2x_{r,s_i}^{\kappa_i}-x_{r,s_j}^{\kappa_j}\big)\big(1-t^2x_{r,s_i}^{\kappa_i}
 x_{r,s_j}^{\kappa_j}\big)
 \big(t^{-2}x_{r,s_i}^{-\kappa_i}-x_{r,s_j}^{-\kappa_j}\big)\big(1-t^{-2}x_{r,s_i}^{-\kappa_i}x_{r,s_j}^{-\kappa_j}\big) \\ \qquad
{}= q\big(t^{2\kappa_i}q^{s_i}-q^{s_j}\big)^2
 \end{gather*}
from which~\eqref{eq5.7} follows.
\end{proof}
\begin{Lemma}\label{lemma5.2}
The following inequality and limit formula hold:
\begin{gather}
\big|\big({-}\epsilon_r^{-1}q,-\epsilon_r^{-1}aq\big)_{\infty}\epsilon_r^{-2}\operatorname{Res}_{x_i=x_{r,s_i}}
 A(x_i)\big|\le K |a|^{s_i}q^{-(s_i+1)}, \label{eq5.8}
 \\
 \lim_{r\to \infty} \big({-}\epsilon_r^{-1}q,-\epsilon_r^{-1}aq\big)_{\infty}\epsilon_r^{-2}\operatorname{Res}_{x_i=x_{r,s_i}} A(x_i)=
-\frac{a^{s_{i}}q^{-(s_i+1)}}{\big(q,bq^{s_i+1}\big)_{\infty}(q)_{s_i}}.\label{eq5.9}
\end{gather}
\end{Lemma}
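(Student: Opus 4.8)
The plan is to insert the explicit residue \eqref{eq5.4} into
\[
\Phi_r:=\big({-}\epsilon_r^{-1}q,-\epsilon_r^{-1}aq\big)_{\infty}\,\epsilon_r^{-2}\operatorname{Res}_{x_i=x_{r,s}}A(x_i),\qquad s:=s_i ,
\]
and to sort the factors by the behaviour of their arguments as $\epsilon_r\to0$. The symbols $\big(\epsilon_rq^{-s+\frac12},-\epsilon_rq^{-s+\frac12},\epsilon_rq^{-s}\big)_{\infty}$ in the numerator and $\big({-}\epsilon_raq^{-s},\epsilon_r^2bq^{-s}\big)_{\infty}$ in the denominator of \eqref{eq5.4} have arguments tending to $0$; I would gather them into a block $\mathcal C_r$ and check, using the bound $|\epsilon_r|<q^{s+\frac12}$ from \eqref{eq5.1} together with $|a|,|b|<1$, that every argument stays in a fixed disc of radius $<1$, so that $\mathcal C_r$ is bounded above and below uniformly in $r,s$ and $\mathcal C_r\to1$. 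The $r$-independent factor $(bq^{s+1})_{\infty}$ is left in the denominator (where $|(bq^{s+1})_{\infty}|\ge(|b|q;q)_\infty>0$ uniformly), and the elementary prefactor $(-1)^sq^{s(s+1)/2}/\big((q)_{\infty}(q)_s\big)$ is carried along, with $(q)_\infty\le(q)_s\le1$.

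The heart of the matter is the divergent block, built from $\epsilon_r^{-2}$, the two prefactor symbols, and the residue symbols whose arguments blow up. I would reduce it to a finite product. The two $a$-symbols telescope to $(-\epsilon_r^{-1}aq;q)_s$; for the $q$-symbols I would use $(z)_\infty(-z)_\infty=(z^2;q^2)_\infty$ to get $\big(\epsilon_r^{-1}q^{s+\frac32}\big)_\infty\big({-}\epsilon_r^{-1}q^{s+\frac32}\big)_\infty=(\epsilon_r^{-2}q^{2s+3};q^2)_\infty$, split off $(-\epsilon_r^{-1}q)_\infty=(-\epsilon_r^{-1}q^{s+1})_\infty\,(-\epsilon_r^{-1}q;q)_s$, pair $(\epsilon_r^{-1}q^{s+1})_\infty(-\epsilon_r^{-1}q^{s+1})_\infty=(\epsilon_r^{-2}q^{2s+2};q^2)_\infty$, and then recombine via $(A;q^2)_\infty(Aq;q^2)_\infty=(A;q)_\infty$ into $(\epsilon_r^{-2}q^{2s+2};q)_\infty$. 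Since $(\epsilon_r^{-2}q^{s+1};q)_\infty=(\epsilon_r^{-2}q^{s+1};q)_{s+1}(\epsilon_r^{-2}q^{2s+2};q)_\infty$, the infinite parts cancel against the denominator, leaving
\[
\Phi_r=\frac{(-1)^s q^{s(s+1)/2}}{(q)_{\infty}(q)_s(bq^{s+1})_{\infty}}\;\epsilon_r^{-2}\,\frac{(-\epsilon_r^{-1}aq;q)_s\,(-\epsilon_r^{-1}q;q)_s}{(\epsilon_r^{-2}q^{s+1};q)_{s+1}}\;\mathcal C_r ,
\]
a ratio of finite $q$-shifted factorials times the bounded factor $\mathcal C_r$.

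For the limit \eqref{eq5.9} I would read off the leading behaviour of each finite product as $\epsilon_r\to0$: $(-\epsilon_r^{-1}aq;q)_s\sim\epsilon_r^{-s}a^sq^{s(s+1)/2}$, $(-\epsilon_r^{-1}q;q)_s\sim\epsilon_r^{-s}q^{s(s+1)/2}$, and $(\epsilon_r^{-2}q^{s+1};q)_{s+1}\sim(-1)^{s+1}\epsilon_r^{-2(s+1)}q^{(s+1)(3s+2)/2}$. All powers of $\epsilon_r$ then cancel, giving $\epsilon_r^{-2}(\cdots)\to(-1)^{s+1}a^sq^{-(s+1)(s+2)/2}$; multiplying by the prefactor and by $\mathcal C_r\to1$ collapses the signs to $-1$ and the $q$-powers to $q^{-(s+1)}$, which is exactly $-a^sq^{-(s+1)}/\big((q,bq^{s+1})_{\infty}(q)_s\big)$.

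The inequality \eqref{eq5.8} is the delicate part, because it must hold uniformly in $r$ \emph{and} in $s$ over the whole range $0\le s\le r-1$. Working from the finite-product form, the numerator is estimated from above by factoring out leading terms: $|(-\epsilon_r^{-1}aq;q)_s|\le K(|\epsilon_r|^{-1}|a|)^sq^{s(s+1)/2}$ and likewise for $(-\epsilon_r^{-1}q;q)_s$, the geometric correction products being uniformly summable by $|\epsilon_r|<q^{s+\frac12}$ and hence uniformly bounded. The main obstacle is the \emph{lower} bound for $(\epsilon_r^{-2}q^{s+1};q)_{s+1}=\prod_{j=0}^{s}\big(1-\epsilon_r^{-2}q^{s+1+j}\big)$, whose factors approach the unit circle; here I would use the fixed two-sided window $q^{1/2}<|\epsilon_0|<q^{-1/2}$ behind \eqref{eq5.1}, which forces $\epsilon_r^{2}q^{-(s+1+j)}\le|\epsilon_0|^2q=:\xi<1$ for all $0\le j\le s\le r-1$ and $\sum_j\epsilon_r^{2}q^{-(s+1+j)}\le\xi/(1-q)$. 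Consequently $\prod_{j=0}^{s}\big(1-\epsilon_r^{2}q^{-(s+1+j)}\big)$ is bounded below by a positive constant, so $\big|(\epsilon_r^{-2}q^{s+1};q)_{s+1}\big|\ge c\,|\epsilon_r|^{-2(s+1)}q^{(s+1)(3s+2)/2}$ with $c>0$ uniform. Assembling the three estimates, the powers of $|\epsilon_r|$ cancel exactly as in the limit, the $q$-powers collapse to $q^{-(s+1)}$, and the remaining $s$-dependence is a single $|a|^s$ with a uniform constant, yielding \eqref{eq5.8}.
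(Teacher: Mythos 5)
Your proposal is correct and follows essentially the same route as the paper: your displayed decomposition of $\Phi_r$ is exactly the paper's identity $\big({-}\epsilon_r^{-1}q,-\epsilon_r^{-1}aq\big)_{\infty}\operatorname{Res}_{x_i=x_{r,s}}A(x_i)=\frac{(-1)^sq^{s(s+1)/2}}{(q)_{\infty}(q)_{s}}\frac{(-\epsilon_r^{-1}q,-\epsilon_r^{-1}aq)_s}{(\epsilon_r^{-2}q^{s+1})_{s+1}}\,\mathcal C_r$ (with $(bq^{s+1})_\infty$ merely moved out of the bounded block), and your limit computation and two-sided estimates — upper bounds on the two length-$s$ factorials and the uniform lower bound on $(\epsilon_r^{-2}q^{s+1};q)_{s+1}$ via $q^{1/2}<|\epsilon_0|<q^{-1/2}$ and \eqref{eq5.1} — are the same in substance, the paper just writing the finite ratio as $\prod_{l=1}^s(\epsilon_r+q^l)(\epsilon_r+aq^l)\big/\prod_{l=1}^{s+1}(\epsilon_r^2-q^{s+l})$ so that the $\epsilon_r$-powers cancel identically before estimating.
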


\begin{proof}
By~\eqref{eq5.4} one has
\begin{gather*}
\big({-}\epsilon_r^{-1}q,-\epsilon_r^{-1}aq\big)_{\infty}\operatorname{Res}_{x_i=x_{r,s}} A(x_i)
\\ \qquad
{}=\frac{(-1)^sq^{\frac{s(s+1)}{2}}}{(q)_{\infty}(q)_{s}}
\frac{\big({-}\epsilon_r^{-1}q,-\epsilon_r^{-1}aq\big)_s}{\big(\epsilon_r^{-2}q^{s+1}\big)_{s+1}}
\frac{\big( \epsilon_rq^{-s+\frac{1}{2}}, -\epsilon_rq^{-s+\frac{1}{2}} , \epsilon_rq^{-s}\big)_{\infty}}
{\big( {-}\epsilon_r a q^{-s} , bq^{s+1} , \epsilon_{r}^2 bq^{-s}\big)_{\infty}},
\end{gather*}
which implies the limit formula~\eqref{eq5.9}. For the inequality~\eqref{eq5.8}, we note that
\begin{equation*}
\Bigg|\frac{\big( \epsilon_rq^{-s+\frac{1}{2}}, -\epsilon_rq^{-s+\frac{1}{2}} , \epsilon_rq^{-s}\big)_{\infty}}
{\big( {-}\epsilon_r a q^{-s} , bq^{s+1} , \epsilon_{r}^2 bq^{-s}\big)_{\infty}}
 \Bigg| \le \bigg| \frac{1}{(|b|,bq^{s+1})_{\infty}} \bigg| \le \frac{1}{(|b|)_{\infty}^2}.
\end{equation*}
One also has
\begin{equation*}
\epsilon_r^{-2} \frac{\big({-}\epsilon_r^{-1}q,-\epsilon_r^{-1}aq\big)_s}{\big(\epsilon_r^{-2}q^{s+1}\big)_{s+1}}=\frac{\prod_{l=1}^s
 \big(\epsilon_r+q^l \big)\big( \epsilon_r+aq^l \big)}{\prod_{l=1}^{s+1} \big(\epsilon_{r}^2 -q^{s+l} \big)},
\end{equation*}
and so
\begin{gather*}
 \bigg|\prod_{l=1}^s \big(\epsilon_r+q^l \big)\big( \epsilon_r+aq^l \big)\bigg |\le
 \bigg|\prod_{l=1}^{s}\big(q^{s+\frac{1}{2}}+q^l\big)\big(q^{s+\frac{1}{2}}+|a|q^l\big)\bigg|
 \le
|a|^{s}q^{s(s+1)}\big({-}q^{\frac{1}{2}},-|a|^{-1}q^{\frac{1}{2}}\big)_{\infty},
\\
 \bigg|{\prod_{l=1}^{s+1} \big(\epsilon_{r}^2 -q^{s+l} \big)}\bigg | \ge \prod_{l=1}^{s+1}
 \big(q^{s+l}-|\epsilon_0|^2q^{2r}\big)\ge q^{s(s+1)+\frac{(s+1)(s+2)}{2}} \big(|\epsilon_0|^2q\big)_{\infty}.
\end{gather*}
These inequalities imply~\eqref{eq5.8}.
\end{proof}

\begin{Lemma}\label{lemma5.3} 
If $\kappa_i=0$ for some $i$, then\medskip

Case {\rm I:}
\begin{gather*}
\lim_{r\to \infty}\big({-}\epsilon_r^{-1} q, -\epsilon_r^{-1} aq\big)_{\infty}^n \,\epsilon_r^{n(n-1)k+n_1(n_1+1)}
\int_{C_{r,\kappa_1}\times \cdots \times C_{r,\kappa_n}}
 \chi_1(x) \chi_1(x) ^{\dagger} \Delta(x) \frac{{\rm d}x}{x}= 0,
\end{gather*}

Case {\rm II:}
\begin{gather*}
\lim_{r\to \infty}\big({-}\epsilon_r^{-1} q, -\epsilon_r^{-1} aq\big)_{\infty}^n \epsilon_r^{n(n-1)k+n_1(n_1+1)+n}
\int_{C_{r,\kappa_1}\times \cdots \times C_{r,\kappa_n}}
 \chi_1(x) \chi_1(x) ^{\dagger} \Delta(x) \frac{{\rm d}x}{x}= 0.
 \end{gather*}
\end{Lemma}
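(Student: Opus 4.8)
The plan is to evaluate the integral over $C_{r,\kappa_1}\times\cdots\times C_{r,\kappa_n}$ by taking residues in every coordinate $i$ with $\kappa_i=\pm1$, exactly as in~\eqref{eq5.3}, while keeping the genuine contour integral over $C$ in each coordinate with $\kappa_i=0$. Write $S_\pm=\{i\colon\kappa_i=\pm1\}$ and $S_0=\{i\colon\kappa_i=0\}$, with $|S_0|\ge1$. Taking the $S_\pm$-residues first (this step is exact and uses analyticity, since $g$, $h$, $D$ are Laurent polynomials in each variable) produces a finite sum over $\{s_i\}_{i\in S_\pm}$ of $|S_0|$-fold integrals in which $g$, $h$, $D$ have their $S_\pm$-arguments frozen at $x_{r,s_i}^{\kappa_i}$ and their $S_0$-arguments running over $C$. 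Since $g(x)=\prod_i g_i(x_i)$ and $\prod_i A(x_i)$ factor over the coordinates, the whole estimate will factor once $h$ and $D$ are pulled out as uniform constants. To do so I would first check that the bounds of Lemma~\ref{lemma5.1} persist at such \emph{mixed} configurations: treating $h$ and $D$ factor by factor, each pair $\{i,j\}$ carries a power $\epsilon_r^{2(k-1)}$ (for $D$) or $\epsilon_r^2$ (for $h$), and this power keeps the corresponding factor bounded whether $x_i,x_j$ sit at poles ($|x|\sim\epsilon_r^{-1}$), on $C$ ($|x|\sim1$), or one of each. Pulling $\epsilon_r^{n(n-1)(k-1)}|D|\le K$ and $\epsilon_r^{n(n-1)+n_1(n_1-1)}|h|\le K$ out as uniform constants, the leftover normalization is $(-\epsilon_r^{-1}q,-\epsilon_r^{-1}aq)_\infty^{\,n}\,\epsilon_r^{M'}$ with $M'=2n_1$ in Case~I and $M'=2n_1+n$ in Case~II, which I distribute as $\epsilon_r^{m_i}$ per coordinate ($\sum_i m_i=M'$, with $m_i$ larger for $i>n_0$).

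For a residue coordinate $i\in S_\pm$ the task is to bound $(-\epsilon_r^{-1}q,-\epsilon_r^{-1}aq)_\infty\,\epsilon_r^{m_i}\sum_{s=0}^{r-1}\big|g_i(x_{r,s}^{\kappa_i})\operatorname{Res}_{x_i=x_{r,s}}A(x_i)\big|$ uniformly in $r$. Here Lemma~\ref{lemma5.2} supplies $(-\epsilon_r^{-1}q,-\epsilon_r^{-1}aq)_\infty\,\epsilon_r^{-2}\operatorname{Res}_{x_i=x_{r,s}}A\le K|a|^sq^{-(s+1)}$, while a direct inspection of $g_i$ at $x_i=x_{r,s}^{\kappa_i}$ shows that $\epsilon_r^{m_i+2}g_i$ is dominated by its leading monomial (of order $q^{2s}$, resp.\ $q^{3s}$ for $i>n_0$ with the dominant sign), the remaining terms carrying either extra powers of $\epsilon_r$ or extra $q^{r}$-decay. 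Thus each residue coordinate contributes a geometric series in $s$ of ratio $<1$, hence a factor bounded uniformly in $r$.

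The decisive estimate is for $i\in S_0$, where I must show $(-\epsilon_r^{-1}q,-\epsilon_r^{-1}aq)_\infty\,\epsilon_r^{m_i}\max_{|x_i|=1}|g_i(x_i)A(x_i)|\to0$. On $|x_i|=1$ the only factors of $A(x_i)$ that diverge as $\epsilon_r\to0$ are $(\epsilon_r^{-1}q^{1/2}x_i,\epsilon_r^{-1}q^{1/2}x_i^{-1})_\infty^{-1}$, so everything reduces to comparing these with $(-\epsilon_r^{-1}q,-\epsilon_r^{-1}aq)_\infty$. Writing $\epsilon_r=\epsilon_0q^r$ and splitting the product at its $r$-th factor yields the exact identity
\begin{equation*}
\big(\epsilon_0^{-1}q^{-r}c\big)_\infty=\big({-}\epsilon_0^{-1}c\big)^{r}q^{-\binom{r+1}{2}}\big(\epsilon_0 c^{-1}q\big)_r\big(\epsilon_0^{-1}c\big)_\infty ,
\end{equation*}
in which the $r$-dependence is isolated in the explicit prefactor and the factors $\big(\epsilon_0 c^{-1}q\big)_r$, $\big(\epsilon_0^{-1}c\big)_\infty$ stay uniformly bounded for $|x_i|=1$ (using $q^{1/2}<|\epsilon_0|<q^{-1/2}$). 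Applying this with $c\in\{-q,-aq,q^{1/2}x_i,q^{1/2}x_i^{-1}\}$, the super-exponential factors $|\epsilon_0|^{-2r}$ and $q^{-2\binom{r+1}{2}}$ cancel between numerator and denominator, and the ratio collapses to $(|a|q)^{r}$ times uniformly bounded quantities; since $0<|a|q<1$ this tends to $0$. Tracking $g_i$ and the $\epsilon_r^{m_i}$ share then gives the claim in both cases, and in Case~II the extra $\epsilon_r^{\,n}$ in the normalization is precisely what offsets the additional singularity of $g_i$ produced by $a_3=\epsilon^{-1}q^{1/2}\to\infty$.

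I expect the main obstacle to be this last uniform comparison of the diverging $q$-shifted factorials: one must verify that the $|\epsilon_0|^{-2r}q^{-2\binom{r+1}{2}}$ growth cancels exactly and that the residual factors are controlled uniformly both in $x_i$ on the unit circle and in the frozen $S_\pm$-variables, together with the mixed-configuration bounds on $h$ and $D$. Granting these, the normalized integrand is dominated by a product of factors that are bounded uniformly in $r$ (one per $i\in S_\pm$) and factors of order $(|a|q)^{r}$ (one per $i\in S_0$); as $|S_0|\ge1$, the entire expression vanishes as $r\to\infty$, establishing both Case~I and Case~II.
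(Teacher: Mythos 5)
Your proposal is correct and follows essentially the same route as the paper's own proof: the same residue/contour decomposition with $S_0$ and $S_\pm$, the same use of Lemma~\ref{lemma5.2} for residue coordinates, the same per-pair control of $h$ and $D$ in mixed configurations, and the same splitting identity for the diverging $q$-shifted factorials on the unit circle --- your $(|a|q)^r$ estimate is just a marginally sharper form of the paper's bound~\eqref{eq5.10}, $\big({-}\epsilon_r^{-1}q,-\epsilon_r^{-1}aq\big)_{\infty}|A(x_i)|\le K|\epsilon_r|$, obtained from exactly the same identity. The only difference is organizational: you distribute the $\epsilon_r$-budget coordinate by coordinate, whereas the paper tracks the global exponents $e_1$, $e_2$ and verifies $e_1,e_2\ge p_0\ge 1$; both accountings close.
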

\begin{proof}
Put $I_{0}=\{i \mid\kappa_i=0 \}$, $I_{+}=\{i\mid \kappa_i=1\}$, $I_{-}=\{i\mid \kappa_i=-1\}$ and
denote their intersection with $\{ n_0+1,\dots,n \}$ by $ I_{0}'$, $I_{+}'$, $I_{-}'$ respectively.
We write
\begin{equation*}
|I_{0}|= p_0,\qquad
|I_{+}|= p_{+}, \qquad
|I_{-}|= p_{-}, \qquad
|I_{0}'|= p_0', \qquad
|I_{+}'|= p_{+}', \qquad
|I_{-}'|= p_{-}',
\end{equation*}
so that $p_0+p_{+}+p_{-}=n$, $p_0'+ p_{+}'+p_{-}'=n_1$. We assume that $p_0\ge 1$.

Let $x=(x_i)$ be $|x_i|=1$, $i\in I_0$ and $x_i=x_{r,s_i}^{\kappa_i}$, $i\in I_{+}\cup I_{-}$. Note first that
\begin{equation} \label{eq5.10} 
 \big({-}\epsilon_r^{-1}q,-\epsilon_r^{-1}aq\big)_{\infty}| A(x_i)|\le K |\epsilon_r|
\end{equation}
when $|x_i|=1$.
This follows at once from estimate
\begin{equation*}
| A(x_i)|\le K\frac{1}{\big|\big(\epsilon_r^{-1}q^{\frac{1}{2}}x_i, \epsilon_r^{-1}q^{\frac{1}{2}}x_i^{-1}\big)_{\infty}\big|}
\end{equation*}
and equalities
\begin{align*}
\big(\epsilon_r^{-1}q^{\frac{1}{2}}x_i^{\pm 1}\big)_{\infty} & = \big(\epsilon_r^{-1}q^{\frac{1}{2}}x_i^{\pm 1}\big)_r
\, \big(\epsilon_0^{-1}q^{\frac{1}{2}}x_i^{\pm 1}\big)_{\infty}
\\
& = (-1)^r \epsilon_r^{-r} q^{\frac{r}{2}+\frac{r(r-1)}{2}}x_i^{\pm r} \big(\epsilon_0q^{\frac{1}{2}}x_i^{\mp 1}\big)_r \,
 \big(\epsilon_0^{-1}q^{\frac{1}{2}}x_i^{\pm 1}\big)_{\infty}.
 \end{align*}
Hence by~\eqref{eq5.8} and~\eqref{eq5.10} we see
\begin{equation} \label{eq5.11} 
 \bigg|\big(\epsilon_r^{-1} q, -\epsilon_r^{-1} aq\big)_{\infty}^n
\prod_{i\in I_0} A(x_i)
 \!\!\!\! \prod_{i\in I_{+}\cup I_{-}}\!\!\operatorname{Res}_{x_i=x_{r,s_i}^{\kappa_i}} A(x_i)\bigg|
\le K\, \bigg\vert\frac{a}{q}\bigg\vert^{\sum_{i\in I_{+}\cup I_{-}}
s_i } |\epsilon_r|^{n+p_+ +p_-}.
\end{equation}
The following inequalities are also immediate:
\begin{equation} \label{eq5.12} 
\big| \epsilon_r^{(n(n-1)-p_0(p_0-1))(k-1)}D(x) \big| \le K, \qquad
\big| \epsilon_{r}^{n(n-1)-p_0(p_0-1)+n_1(n_1-1)-p_0'(p_0'-1)}h(x) \big| \le K.
\end{equation}

The factor $g(x)$ needs precise analysis. In \emph{Case} I: $a_3=\epsilon_rbq^{\frac{1}{2}}$, $a_4=-q^{\frac{1}{2}}$
\big(resp.\ \emph{Case} II: $a_3=\epsilon_r^{-1}q^{\frac{1}{2}}$, $a_4=-q^{\frac{1}{2}}$\big), we have
\begin{gather*}
\big|(x_i-a_3)(x_{i}-a_4)\big(x_{i}^{-2}-1\big)\big| \le K \quad \big(\text{resp.} \ K |\epsilon_r|^{-1} \big),
\\
\big| (x_{r,s_i}-a_3)(x_{r,s_i}-a_4)\big(x_{r,s_i}^{-2}-1\big)\big| \le K |\epsilon_r|^{-2} |q|^{2s_i }\quad
\big(\text{resp.} \ K |\epsilon_r|^{-2} |q|^{s_i} \big),
\\
\big| (x_{r,s_i}^{-1}-a_3)\big(x_{r,s_i}^{-1}-a_4\big)\big(x_{r,s_i}^2-1\big)\big| \le K|\epsilon_r |^{-1} |q|^{s_i}\quad
\big(\text{resp.} \ K |\epsilon_r|^{-3} |q|^{2s_i}\big);
\\
\big| (1-a_3x_i)(1-a_4x_i)\big(1-a_3^{-1}x_i^{-1}\big)\big(1-a_4^{-1}x_i^{-1}\big)\big| \le K |\epsilon_r|^{-1} \quad
\big(\text{resp.} \ K |\epsilon_r|^{-1} \big),
\\
\big|(1-a_3x_{r,s_i})(1-a_4x_{r,s_i})\big(1-a_3^{-1}x_{r,s_i}^{-1}\big)\big(1-a_4^{-1}x_{r,s_i}^{-1}\big)\big|
\le K |\epsilon_r|^{-1}
\quad \big(\text{resp.} \ K |\epsilon_r|^{-3} \big),
\\
\big| \big(1\!-a_3x_{r,s_i}^{-1}\big)\big(1\!-a_4x_{r,s_i}^{-1}\big)\big(1\!-a_3^{-1}x_{r,s_i}\big)\big(1\!-a_4^{-1}x_{r,s_i}\big)\big|
 \le K|\epsilon_r| ^{-3}|q|^{2s_i} \quad\!\!\! \big(\text{resp.}\, K |\epsilon_r|^{-1} |q|^{2s_i} \big),
\end{gather*}
where $|x_i|=1$. Hence
\begin{align} 
& Case~{\rm I}{:} \qquad
\big|\epsilon^{2p_{+}+p_{-}+n_1+2p_{-}'}g(x)\big| \le K |q|^{2\sum_{i\in I_{+}} s_i+\sum_{i\in I_{-}}s_i
+2\sum_{i\in I_{-}' }s_i}, \label{eq5.13} \\
 & Case~{\rm II}{:} \qquad
\big|\epsilon^{n+p_{+}+2p_{-}+n_1+2p_{+}'}g(x)\big| \le K |q|^{\sum_{i\in I_{+}} s_i+2\sum_{i\in I_{-}}s_i+2\sum_{i\in I_{-}' }s_i} . \label{eq5.14}
 \end{align}
Now combining~\eqref{eq5.11}, \eqref{eq5.12}, \eqref{eq5.13} and~\eqref{eq5.14} yields\medskip

\emph{Case} I:
\begin{gather*}
\bigg|\big(\epsilon_r^{-1} q, -\epsilon_r^{-1} aq\big)_{\infty}^n \epsilon_r^{n(n-1)k+n_1(n_1+1)}g(x)h(x)D(x) \prod_{i\in I_0} A(x_i)
\prod_{i\in I_{+}\cup I_{-}}\operatorname{Res}_{x_i=x_{r,s_i}^{\kappa_i}} A(x_i)\bigg|
\\ \qquad
{} \le K \epsilon_r^{e_1} \vert{a}\vert^{\sum_{i\in I_{+}\cup I_{-}}s_i } |q|^{\sum_{i\in I_{+}} s_i+2\sum_{i\in I_{-}' }s_i} ,
\end{gather*}

\emph{Case} II:
\begin{gather*}
\bigg|\big(\epsilon_r^{-1} q, -\epsilon_r^{-1} aq\big)_{\infty}^n \,\epsilon_r^{n(n-1)k+n_1(n_1+1)+n}g(x)h(x)
D(x) \prod_{i\in I_0} A(x_i)
 \prod_{i\in I_{+}\cup I_{-}}\operatorname{Res}_{x_i=x_{r,s_i}^{\kappa_i}} A(x_i)\bigg|
 \\ \qquad
{} \le K \epsilon_r^{e_2} \vert{a}\vert^{\sum_{i\in I_{+}\cup I_{-}}s_i } |q|^{\sum_{i\in I_{-}}s_i+2\sum_{i\in I_{-}' }s_i} ,
 \end{gather*}
where
\begin{gather*}
e_1= p_0(p_0-1)k+p_0'(p_0'-1)+ n+n_1-p_{+} -2p_{-}',
\\
e_2 = p_0(p_0-1)k+p_0'(p_0'-1)+ n+n_1-p_{-} -2p_{+}'.
\end{gather*}
Since
\begin{gather*}
n+n_1-p_{+} -2p_{-}' = p_{0}+p_{0}'+p_{+}'+p_{-}-p_{-}' \ge p_0 \ge 1 ,
\\
 n+n_1-p_{-} -2p_{+}' = p_{0}+p_{0}'+p_{-}'+p_{+}-p_{+}' \ge p_0 \ge 1,
\end{gather*}
and the series $\sum_{} \vert{a}\vert^{\sum_{i\in I_{+}\cup I_{-}}s_i } $ is absolutely convergent, we obtain the conclusion of our lem\-ma.
\end{proof}

 \begin{Theorem}\label{theorem5.4} 
 We have\medskip

Case {\rm I:}
\begin{gather*}
\lim_{r\to \infty}\big({-}\epsilon_r^{-1} q, -\epsilon_r^{-1} aq\big)_{\infty}^n \epsilon_r^{n(n-1)k+n_1(n_1+1)}
 \frac{1}{(2\pi {\rm i})^n}\int_{C_r^n}
 \chi_1(x) \chi_1(x) ^{\dagger} \Delta(x) \frac{{\rm d}x}{x}
 \\ \qquad
{} = A_{I} \sum_{s_1,\dots,s_n=0}^{\infty} \,\prod_{i=1}^{n} a^{s_i}q^{((n-1)k +1)s_i} \frac{(q^{s_i+1})_{\infty}}{(bq^{s_i+1})_{\infty}}
\prod_{1\le i< j\le n}\big(q^{s_i-s_j}, q^{-s_i+s_j+1}\big)_k
\\ \qquad\hphantom{=}
 {}\times\prod_{i=n_0+1}^nq^{s_i}\big(1-bq^{s_i+1}\big)
 \prod_{n_0+1\le i<j \le n}\big(t^{-2}q^{s_i}-q^{s_j}\big)\big(q^{s_i}-t^{-2}q^{s_j}\big),
\end{gather*}

Case {\rm II:}
\begin{gather*}
 \lim_{r\to \infty}\big({-}\epsilon_r^{-1} q, -\epsilon_r^{-1} aq\big)_{\infty}^n \epsilon_r^{n(n-1)k+n_1(n_1+1)+n}
 \frac{1}{(2\pi {\rm i})^n} \int_{C_r^n}
 \chi_1(x) \chi_1(x) ^{\dagger} \Delta(x) \frac{{\rm d}x}{x}
 \\ \qquad
{} = A_{II} \sum_{s_1,\dots,s_n=0}^{\infty} \prod_{i=1}^{n} a^{s_i}q^{((n-1)k +1)s_i}
 \frac{(q^{s_i+1})_{\infty}}{\big(bq^{s_i+1}\big)_{\infty}} \prod_{1\le i< j\le n}\big(q^{s_i-s_j+1}, q^{-s_i+s_j}\big)_k
 \\ \qquad\hphantom{=}
 {}\times\prod_{i=n_0+1}^nq^{s_i}\big(1-q^{s_i}\big)
 \prod_{n_0+1\le i<j \le n}\big(t^2q^{s_i}-q^{s_j}\big)\big(q^{s_i}-t^2q^{s_j}\big),
 \end{gather*}
 where
 \begin{gather*}
 A_{I}= (-1)^{\binom{n}{2}k} (q)_{\infty}^{-2n}\,q^{\binom{n_1}{2}+\binom{n}{2}
\frac{k(k+1)}{2}}\,t^{n_1(n_1-1)} b^{-n_1},
\\
 A_{II} =(-1)^{\binom{n}{2}k} (q)_{\infty}^{-2n}\,q^{\frac{n_1(n_1+1)}{2}+n+
\binom{n}{2} \frac{k(k+1)}{2}}\,t^{n_0(n_0-1)+2n_0n_1} .
\end{gather*}
\end{Theorem}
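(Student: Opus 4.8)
The plan is to evaluate the limit by first reducing the deformed contour integral to a multiple residue sum, then identifying the single term in the $\kappa$-expansion that survives the scaling limit, and finally interchanging limit and summation so that the termwise limits of Lemmas~\ref{lemma5.1} and~\ref{lemma5.2} can be assembled into the stated closed form. First I would start from the contour decomposition~\eqref{eq5.2} together with the residue evaluation~\eqref{eq5.3}, which expresses the integral over $C_r^n$ as a sum over $\kappa=(\kappa_1,\dots,\kappa_n)\in\{0,\pm1\}^n$ of $n$-fold residue sums in $s_1,\dots,s_n$. By Lemma~\ref{lemma5.3}, after multiplication by the normalizing factor $(-\epsilon_r^{-1}q,-\epsilon_r^{-1}aq)_\infty^n\epsilon_r^{N}$ (with $N=n(n-1)k+n_1(n_1+1)$ in Case~I and $N=n(n-1)k+n_1(n_1+1)+n$ in Case~II), every term with some $\kappa_i=0$ tends to $0$. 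Hence only the $2^n$ terms with all $\kappa_i=\pm1$ can contribute.

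Next I would isolate the surviving term via the exponent bookkeeping in the proof of Lemma~\ref{lemma5.3} specialized to $p_0=0$. There the normalized summand is bounded by $K\,\epsilon_r^{e}$ times an absolutely summable factor, where in Case~I one has $e=e_1=p_-+p_+'-p_-'=(p_--p_-')+p_+'\ge0$ since $p_-'\le p_-$. Thus $e_1=0$ forces $p_+'=0$ and $p_-=p_-'$, i.e.\ $\kappa_i=-1$ for all $n_0+1\le i\le n$ and $\kappa_i=+1$ for all $1\le i\le n_0$; for every other choice $e_1>0$, so the corresponding term vanishes. The parallel estimate $e_2=(p_+-p_+')+p_-'\ge0$ singles out $\kappa=\big((-1)^{n_0},1^{n_1}\big)$ in Case~II. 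In this way the limit reduces to a single $\kappa$, which is precisely the one treated in the ``in particular'' formulas of Lemma~\ref{lemma5.1}.

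For this surviving $\kappa$ I would interchange the limit $r\to\infty$ with the summation over $s_1,\dots,s_n$. This is legitimate by dominated convergence, since the uniform bounds~\eqref{eq5.6}, \eqref{eq5.8} and~\eqref{eq5.12} dominate the normalized summand by a fixed absolutely convergent series $\sum K\,|a|^{\sum s_i}|q|^{\cdots}$ independent of $r$. Then I would evaluate termwise: the ``in particular'' $h$-limit of Lemma~\ref{lemma5.1} supplies $\prod_{n_0+1\le i<j\le n}\big(t^{-2}q^{s_i}-q^{s_j}\big)\big(q^{s_i}-t^{-2}q^{s_j}\big)$ in Case~I and its $t^2$-analogue in Case~II; the $D$-limit of Lemma~\ref{lemma5.1}, combined with the residual $h$-factors $\big(q^{s_i}-q^{s_j}\big)$ and $\big(q^{s_i}-t^2q^{s_j}\big)$, assembles the products $\big(q^{s_i-s_j},q^{-s_i+s_j+1}\big)_k$ (resp.\ $\big(q^{s_i-s_j+1},q^{-s_i+s_j}\big)_k$); and the residue limit~\eqref{eq5.9} of Lemma~\ref{lemma5.2}, combined with the one-variable pieces of $g(x)$, produces $\prod_i a^{s_i}q^{((n-1)k+1)s_i}\frac{(q^{s_i+1})_\infty}{(bq^{s_i+1})_\infty}$ together with the boundary factors $q^{s_i}\big(1-bq^{s_i+1}\big)$ (resp.\ $q^{s_i}\big(1-q^{s_i}\big)$) for $n_0+1\le i\le n$. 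Collecting all the remaining $q$- and $t$-power prefactors then yields the constants $A_I$ and $A_{II}$.

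The main obstacle I anticipate is the precise bookkeeping. One must track how the single overall power $\epsilon_r^{N}$ splits among $g$, $h$, $D$ and the $n$ residues so that the surviving term has total $\epsilon_r$-exponent exactly zero, and one must carefully match the termwise limits—in particular the interplay whereby the $h$-factors $\big(q^{s_i}-q^{s_j}\big)$ merge with the $D$-limit $\big(q^{s_i-s_j+1}\big)_{k-1}$ to give $\big(q^{s_i-s_j}\big)_k$ rather than $\big(q^{s_i-s_j+1}\big)_{k-1}$—so that the accumulated scalar prefactors from $g$, $h$, $D$ and $\operatorname{Res}A(x_i)$ collapse cleanly to $A_I$ and $A_{II}$.
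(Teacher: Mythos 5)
Your proposal is correct and follows essentially the same route as the paper: contour decomposition \eqref{eq5.2}--\eqref{eq5.3}, elimination of the $\kappa_i=0$ terms via Lemma~\ref{lemma5.3}, identification of the unique surviving $\kappa$ by specializing the exponents $e_1=p_+'+p_--p_-'$ and $e_2=p_-'+p_+-p_+'$ to $p_0=p_0'=0$, and dominated convergence to pass the limit inside the residue sums, with the termwise limits of Lemmas~\ref{lemma5.1} and~\ref{lemma5.2} (together with the $g$-factor limits) assembling the summand and the constants $A_I$, $A_{II}$. The only minor imprecision is that your dominated-convergence step cites \eqref{eq5.6}, \eqref{eq5.8}, \eqref{eq5.12} but omits an explicit bound on $g$ at the residue points (the analogue of \eqref{eq5.13}--\eqref{eq5.14} with $p_0=0$, which the paper re-derives inside the proof); this is exactly the bookkeeping you flagged and it goes through as you anticipate.
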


\begin{proof}
The integral in the theorem are
sums of the integrals
over $C_{r,\kappa_1} \times \cdots \times C_{r,\kappa_n}$, $\kappa_i=0,\pm 1$, see~\eqref{eq5.2}.
By Lemma~\ref{lemma5.3} it suffices to consider only the integrals over $C_{r,\kappa_1} \times
\cdots \times C_{r,\kappa_n}$ with $\kappa_i=\pm 1$, $i=1,\dots,n$,
 in which case each integral reduces to the sum of residues of
the poles $ \big(x_{r,s_1}^{\kappa_1},\dots,x_{r,s_n}^{\kappa_n}\big)$, $0\le s_i\le r-1$, $i=1,\dots,n$, see~\eqref{eq5.3}.
We assert that the order with respect to $\epsilon_r$ of the factor $g\big(x_{r,s_1}^{\kappa_1}, \dots, x_{r,s_n}^{\kappa_n} \big)h\big(x_{r,s_1}^{\kappa_1}, \dots, x_{r,s_n}^{\kappa_n} \big)
 D\big(x_{r, s_1}^{\kappa_1},\dots,x_{r,s_n}^{\kappa_n}\big) $ in the right-hand sum of~\eqref{eq5.3}
 is lowest only when
 $\kappa_1=\dots=\kappa_{n_0}=1$, $\kappa_{n_0 +1}= \dots =\kappa_n = -1$ in \emph{Case}~I and
 $ \kappa_1=\dots=\kappa_{n_0}=-1$, $\kappa_{n_0 +1}= \dots =\kappa_n = 1$ in \emph{Case}~II.
To see this, in view of Lemma~\ref{lemma5.1}, one only needs to consider how the order of $g\big( x_{r,s_1}^{\kappa_1},\dots,x_{r,s_n}^{\kappa_n}\big)$ depends on the choice of $\kappa$.
Let us look at the exponents $e_1$, $e_2$ in the proof of Lemma~\ref{lemma5.3}. Since $p_0=p_0'=0$, we see
\begin{equation*}
e_1=p_{+}'+p_{-}-p_{-}' , \qquad e_2=p_{-}'+p_{+}-p_{+}' .
\end{equation*}
Hence $e_1$ (resp.~$e_2$) is smallest when $p_{+}'=p_{-}-p_{-}' =0$ (resp.\ $p_{-}'=p_{+}-p_{+}'=0 $), which is nothing but
our assertion.

Since\vspace{-1mm}
\begin{gather*}
 \big|\epsilon_r^2(x_{r,s}-a_3)(x_{r,s}-a_4)\big(x_{r,s}^{- 2}-1\big) \big| \le K|q|^{2s},
 \\
 \big|\epsilon_r\big(x_{r,s}^{-1}-a_3\big)\big(x_{r,s}^{-1}-a_4\big)\big(x_{r,s}^{2}-1\big) \big| \le K|q|^s,
\\
 \lim_{r\to \infty}\epsilon_r^2(x_{r,s}-a_3)(x_{r,s}-a_4)\big(x_{r,s}^{- 2}-1\big)=-q^{2s+1},
\\
\lim_{r\to \infty}\epsilon_r\big(x_{r,s}^{-1}-a_3\big)\big(x_{r,s}^{-1}-a_4\big)\big(x_{r,s}^{2}-1\big)= \big(1-bq^{s+1}\big)q^{s+1},
\end{gather*}
and\vspace{-1mm}
\begin{gather*}
 \big| \epsilon_r^3\big(1-a_3x_{r,s}^{-1}\big)\big(1-a_4x_{r,s}^{-1}\big)\big(1-a_3^{-1}x_{r,s}\big)\big(1-a_4^{-1}x_{r,s}\big) \big| \le K|q|^{2s},
\\
\lim_{r\to \infty}\epsilon_r^3 \big(1-a_3x_{r,s}^{-1}\big)\big(1-a_4x_{r,s}^{-1}\big)\big(1-a_3^{-1}x_{r,s}\big)\big(1-a_4^{-1}x_{r,s}\big)=-b^{-1}q^{2s} ,
\end{gather*}
we see\vspace{-1mm}
\begin{gather*}
 \big|\epsilon_r^{2n+2n_1}g\big(x_{r, s_1},\dots,x_{r,s_{n_0}},x_{r,s_{n_0+1}}^{-1},\dots,x_{r,s_n}^{-1}\big)\big|\le K\prod_{i=1}^n |q|^{2s_i}
 \prod_{i=n_0+1}^n |q|^{s_i},
\\
 \lim_{r\to \infty}\epsilon_r^{2n+2n_1}g\big(x_{r, s_1},\dots,x_{r,s_{n_0}},x_{r,s_{n_0+1}}^{-1},\dots,x_{r,s_n}^{-1}\big)
 \!=(-1)^nq^n b^{-n_1}\!\prod_{i=1}^n \! q^{2s_i}\!\!\!\!\prod_{i=n_0+1}^{n}\!\!\!\big(1\!-bq^{s_i+1}\big)q^{s_i}.
\end{gather*}

Similarly, in \emph{Case} II, since\vspace{-1mm}
\begin{gather*}
\bigl|\epsilon_r^2(x_{r,s}-a_3)(x_{r,s}-a_4)\big(x_{r,s}^{- 2}-1\big)\bigr| \le K|q|^s,
\\
\bigl|\epsilon_r^3\big(x_{r,s}^{-1}-a_3\big)\big(x_{r,s}^{-1}-a_4\big)\big(x_{r,s}^{2}-1\big)\bigr| \le K |q|^{2s}
\\
\lim_{r\to \infty}\epsilon_r^2(x_{r,s}-a_3)(x_{r,s}-a_4)\big(x_{r,s}^{- 2}-1\big)=(1-q^s)q^{s+1},
\\
\lim_{r\to \infty}\epsilon_r^3\big(x_{r,s}^{-1}-a_3\big)\big(x_{r,s}^{-1}-a_4\big)\big(x_{r,s}^{2}-1\big)=-q^{2s+2},
\end{gather*}
and\vspace{-1mm}
\begin{gather*}
 \bigl|\epsilon_r^3(1-a_3x_{r,s})(1-a_4x_{r,s})\big(1-a_3^{-1}x_{r,s}^{-1}\big)\big(1-a_4^{-1}x_{r,s}^{-1} \big)\bigr| \le K |q|^{2s},
\\
 \lim_{r\to \infty}\epsilon_r^3(1-a_3x_{r,s})(1-a_4x_{r,s})\big(1-a_3^{-1}x_{r,s}^{-1}\big)\big(1-a_4^{-1}x_{r,s}^{-1} \big)=-q^{2s+2},
\end{gather*}
it follows that\vspace{-1mm}
\begin{gather*}
 \big|\epsilon_r^{3n+2n_1}g\big(x_{r, s_1}^{-1},\dots,x_{r,s_{n_0}}^{-1},x_{r,s_{n_0+1}},\dots,x_{r,s_n}\big)\big|\le K\prod_{i=1}^n q^{2s_i}
 \prod_{i=n_0+1}^n q^{s_i},
 \\
 \lim_{r\to \infty}\epsilon_r^{3n+2n_1}g\big(x_{r, s_1}^{-1},\dots,x_{r,s_{n_0}}^{-1},x_{r,s_{n_0+1}},\dots,x_{r,s_n}\big)
 = (-1)^nq^{2n+n_1}\prod_{i=1}^n\! q^{2s_i}\prod_{i=n_0+1}^{n}\!\big(1-q^{s_i}\big)q^{s_i}.
\end{gather*}

Combining Lemmas~\ref{lemma5.1} and~\ref{lemma5.2}, we have thus obtained estimates of the
sums of residues and their limits as $r\to \infty$. One sees
\begin{gather*}
\bigg| \big({-}\epsilon_r^{-1} q, -\epsilon_r^{-1} aq\big)_{\infty}^n \,\epsilon_r^{n(n-1)k+n_1(n_1+1)}g\big(x_{r, s_1},
\dots,x_{r,s_{n_0}},x_{r,s_{n_0+1}}^{-1},\dots,x_{r,s_n}^{-1}\big)
\\ \qquad
{}\times h\big(x_{r, s_1},\dots,x_{r,s_{n_0}},x_{r,s_{n_0+1}}^{-1},\dots,x_{r,s_n}^{-1}\big)
D(x_{r, s_1},\dots,x_{r,s_n}) \prod_{i=1}^{n} \operatorname{Res}_{x_i=x_{r,s_i}}A_i(x)\bigg| \\ \qquad
{}\le K \prod_{i=1}^n(a q)^{s_i} \prod_{i=n_0+1}^n q^{s_i},
 \\
\bigg| \big({-}\epsilon_r^{-1} q, -\epsilon_r^{-1} aq\big)_{\infty}^n \,\epsilon_r^{n(n-1)k+n+n_1(n_1+1)}g\big(x_{r, s_1}^{-1},
\dots,x_{r,s_{n_0}}^{-1},x_{r,s_{n_0+1}},\dots,x_{r,s_n} \big)
\\ \qquad
{}\times h\big(x_{r, s_1}^{-1},\dots,x_{r,s_{n_0}}^{-1},x_{r,s_{n_0+1}},\dots,x_{r,s_n} \big)
D(x_{r, s_1},\dots,x_{r,s_n}) \prod_{i=1}^{n} \operatorname{Res}_{x_i=x_{r,s_i}}A_i(x)\bigg|
\\ \qquad
{}\le K \prod_{i=1}^n(a q)^{s_i} \prod_{i=n_0+1}^n q^{s_i},
\end{gather*}
for the \emph{Cases} I and II respectively. The series $ \sum_{s_1,\dots,s_n=0}^{\infty} \prod_{i=1}^n(a q)^{s_i}
 \prod_{i=n_0+1}^n q^{s_i} $ is absolutely convergent as $|a|<1$. Therefore, in virtue of the dominated convergence
 theorem, these estimates assure that the residue sums converge as $ r \to \infty$ and it is easily seen that the limits
 are as described in the theorem. This completes the proof of Theorem~\ref{theorem5.4}.
 \end{proof}

\begin{Remark} 
In the case $a_3=\epsilon^{-1}q^{\frac{1}{2}}$, $a_4=\epsilon bq^{\frac{1}{2}} $ or
$a_3=\epsilon bq^{\frac{1}{2}}$, $a_4=\epsilon^{-1}q^{\frac{1}{2}} $,
 on the contrary to our \emph{Case} I and \emph{Case} II, it is not certain whether Lemma~\ref{lemma5.3} holds or not.
 One needs more precise analysis on $A(x)$ when $|x|=1$.
\end{Remark}

\section{Limit of the quadratic norm formula: Part 2} \label{section6}

In this section we calculate the limit of the right-hand side of the norm formula~\eqref{eq4.16}, that is
\begin{gather*}
\lim_{r\to \infty}\big({-}\epsilon^{-1}_r q, -\epsilon^{-1}_r a q\big)_{\infty}^n \epsilon_r^{n(n-1)k+n_1(n_1+1)}
\frac{1}{c_{\rho_1 \rho_1} } \langle E_{\rho_1}, E_{\rho_1}\rangle\qquad
\text{in \emph{Case} I},
\\
\lim_{r\to \infty}\big({-}\epsilon^{-1}_r q, -\epsilon^{-1}_r a q\big)_{\infty}^n \epsilon_r^{n(n-1)k+n_1(n_1+1)+n}
\frac{1}{c_{\rho_1 \rho_1} } \langle E_{\rho_1}, E_{\rho_1}\rangle\qquad
\text{in \emph{Case} II}.
\end{gather*}

Recall that we have written the explicit formula for $ \frac{1}{c_{\rho_1 \rho_1} }
\langle E_{\rho_1}, E_{\rho_1}\rangle $ as (see~\eqref{eq4.18})
 \begin{equation*}
\frac{1}{c_{\rho_1 \rho_1} } \langle E_{\rho_1}, E_{\rho_1}\rangle =F(a_1,a_2, a_3,a_4) G(a_1,a_2,a_3,a_4)
\end{equation*}
in which the factor $G(a_1,a_2,a_3,a_4)$ is invariant under the permutation of $a_1$, $a_2$, $a_3$, $a_4$.
In view of~\eqref{eq4.19} the limit of $F(a_1,a_2, a_3,a_4)$ is readily observed to be
\begin{equation} \label{eq6.1} 
 \lim_{r\to \infty} F(a_1,a_2, a_3,a_4)
 = (ab)^{-n_1} q^{-\frac{n_1(n_1-1)}{2}-2n_1}(t^2)^{-n_0n_1-\frac{n_1(n_1-1)}{2}},
\end{equation}
in \emph{Case} I and
\begin{equation} \label{eq6.2} 
\begin{split}
\lim_{r\to \infty} \epsilon_r^{n}F(a_1a_2, a_3a_4)=q^{\frac{n_1(n_1+1)}{2}+n}(t^2)^{n_1(n-1)+\frac{n_0(n_0-1)}{2}}
 \end{split}
\end{equation}
in \emph{Case} II. The limit of $G(a_1,a_2,a_3,a_4)$ is given by the following proposition.

\begin{Proposition}
We have
\begin{gather}
\lim_{r\to \infty}\big({-}\epsilon^{-1}_r q, -\epsilon^{-1}_r a q\big)_{\infty}^n \epsilon_r^{n(n-1)k+n_1(n_1+1)}G(a_1,a_2,a_3,a_4) \nonumber
\\ \qquad
{}={q^{B}}\frac{\big(qt^2\big)_{\infty}^n}{(q)_{\infty}^n} \prod_{i=1}^{n_0} \frac{1}{\big(qt^{2i}\big)_{\infty}}
\prod_{i=1}^{n_1} \frac{1-t^{2i}}{\big(1-t^2\big)\big(q^i t^{2(n_0+i)}\big)_{\infty}}
\prod_{i=1}^{n_0}
\frac{\big(abq^2t^{2(n_0+i-2)}\big)_{\infty}}{\big(aqt^{2(i-1)},bqt^{2(i-1)}\big)_{\infty}} \nonumber
\\ \qquad\hphantom{=}
{}\times \frac{\prod_{i=1}^{2n_1} \big(abq^{i+2} t^{2(2n_0+i-2)}\big)_{\infty}}{\prod_{i=1}^{n_1}
\big(abq^{i+2}t^{2(n_0+i-2)}, aq^{i+1}t^{2(n_0+i-1)},bq^{i+1}t^{2(n_0+i-1)}\big)_{\infty} } \nonumber
\\ \qquad
{} =\frac{q^{B}}{(1-q)^n(q)_{\infty}^{2n}} \prod_{i=1}^{n_1} \frac{1-q^{ik}}{1-q^k} \frac{1-q^{k+1}}{1-q^{i(k+1)}}\nonumber
\\ \qquad\hphantom{=}
{}\times \prod_{i=1}^{n_0} \frac{\Gamma_q(1+ik)\Gamma_q(\alpha +1+(i-1)k)
\Gamma_q(\beta+1+(i-1)k)} {\Gamma_q(1+k)\Gamma_q(\alpha+\beta+2+(n_0+i-2)k)} \nonumber
\\ \qquad\hphantom{=}
{}\times\prod_{i=1}^{n_1}\frac{[(k+1)i]_q \,\Gamma_q(i+(n_0+i) k)}{\Gamma_q(2+k)}
\Gamma_q(\alpha +i+1+(n_0+i-1)k)\nonumber
\\ \qquad\hphantom{=}
{}\times\Gamma_q(\beta +i+1+(n_0+i-1)k)\Gamma_q(\alpha+\beta+i+2+(n_0+i-2)k)\nonumber
\\ \qquad\hphantom{=}
{}\times\prod_{i=1}^{2n_1}\Gamma_q(\alpha+\beta +i+2+(2n_0+i-2)k )^{-1},
\label{eq6.3}
\end{gather}
where
\begin{align*}
B={}& \frac{n_1(n_1\!+1)(n_1\!+2)}{3}\!+ \frac{n_1(n_1\!+1)}{2} \alpha
+\! \bigg(\!\binom{n}{2}\!-n_1(n_1\!-1)\!+\frac{n_1(n_1\!+1)(3n\!-n_1\!-2)}{3}\bigg)k
\\
&+ \binom{n}{2}\alpha k +\frac{n(n-1)(2n-1)}{6}k^2.
\end{align*}

\end{Proposition}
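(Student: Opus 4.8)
The plan is to begin from the closed form \eqref{eq4.20} for $G$ and to exploit the symmetry, noted just before the Proposition, that $G(a_1,a_2,a_3,a_4)$ is invariant under permutations of its arguments. Because of this it suffices to know the six products $a_ra_s$ as a multiset; inserting the values of either \emph{Case}~I or \emph{Case}~II one finds in both cases the same data
\[
\{a_ra_s\}_{1\le r<s\le4}=\big\{-a\epsilon_r^{-1}q,\,-\epsilon_r^{-1}q,\,aq,\,bq,\,-ab\epsilon_r q,\,-\epsilon_r bq\big\},\qquad A=a_1a_2a_3a_4=abq^2,
\]
so that in particular the limit of $G$ is identical in the two cases. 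Two of the products diverge like $\epsilon_r^{-1}$, two vanish like $\epsilon_r$, and the remaining two ($aq$ and $bq$), together with $A=abq^2$, are independent of $\epsilon_r$. The $A$-Pochhammers and the finite factors $(aq\cdots)_\infty$, $(bq\cdots)_\infty$ therefore pass to the limit unchanged, while every Pochhammer built from one of the two vanishing products tends to $(0)_\infty=1$; these are exactly the $\epsilon_r$-independent factors appearing in the first form of \eqref{eq6.3}.

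The essential point is the divergent factors, and the prefactor $(-\epsilon_r^{-1}q,-\epsilon_r^{-1}aq)_\infty^n$ is designed to cancel them. Writing each divergent factor in the denominator of \eqref{eq4.20} as $(-\epsilon_r^{-1}q^{1+m})_\infty$ or $(-a\epsilon_r^{-1}q^{1+m})_\infty$ with $m\ge0$ a nonnegative power of $q$ (recall $t^2=q^k$), I would pair it with one copy of the matching prefactor and use $(z)_\infty/(zq^m)_\infty=(z;q)_m$ together with the asymptotics
\[
(-c\epsilon_r^{-1}q;q)_m=\prod_{j=0}^{m-1}\big(1+c\epsilon_r^{-1}q^{1+j}\big)\sim c^{\,m}\epsilon_r^{-m}q^{m(m+1)/2}\qquad(r\to\infty).
\]
Each of the two blocks $\prod_{i=1}^{n_0}$ and $\prod_{i=1}^{n_1}$ of \eqref{eq4.20} contributes, for every $i$, exactly one factor of each type, so the $n$ copies of $(-\epsilon_r^{-1}q)_\infty$ and the $n$ copies of $(-\epsilon_r^{-1}aq)_\infty$ in the prefactor are used up precisely. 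Summing the exponents of $\epsilon_r$ and using $\binom{n_0}{2}+\binom{n_1}{2}+n_0n_1=\binom n2$, the total power of $\epsilon_r$ comes out to $-\big(n(n-1)k+n_1(n_1+1)\big)$, which cancels the explicit factor $\epsilon_r^{n(n-1)k+n_1(n_1+1)}$. This proves the existence of a finite, nonzero limit and, after collecting the surviving factors, establishes the first (Pochhammer) form of \eqref{eq6.3}.

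It remains to pin down the scalar $q^{B}$. The factors $c^{\,m}$ in the asymptotics above, with $c=a=q^{\alpha}$ on the factors coming from $-a\epsilon_r^{-1}q$, produce precisely the two $\alpha$-dependent terms $\frac{n_1(n_1+1)}{2}\alpha+\binom n2\alpha k$ of $B$, while the powers $q^{m(m+1)/2}$ from both types of divergent factor, together with the front factor $t^{-2n_1(n_1-1)}=q^{-kn_1(n_1-1)}$, give the $\alpha$-free part. The only genuine work is the polynomial identity obtained by evaluating $\sum_{i=1}^{n_0}k(i-1)\big(k(i-1)+1\big)+\sum_{i=1}^{n_1}m_i(m_i+1)-kn_1(n_1-1)$ with $m_i=(1+k)i+k(n_0-1)$, via the standard formulas for $\sum i$, $\sum i^2$, $\sum i^3$, and checking that it equals the $\alpha$-free part of the displayed $B$. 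This is the heaviest step, but it is entirely mechanical. Finally, the second form of \eqref{eq6.3} follows by substituting $t^2=q^k$, $a=q^\alpha$, $b=q^\beta$ and rewriting each surviving factor $(q^{c})_\infty$ as $\Gamma_q(c)^{-1}(q)_\infty(1-q)^{1-c}$ and each $1-q^{x}$ as $(1-q)[x]_q$, then collecting the powers of $(q)_\infty$ and $(1-q)$; this is a routine translation into $q$-gamma functions.
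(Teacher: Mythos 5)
Your proposal is correct and is essentially the paper's own argument: both reduce to a single parameter specialization via the permutation invariance of $G$ (your multiset observation is exactly why one computation covers Cases I and II), cancel the two divergent products $-\epsilon_r^{-1}q$ and $-a\epsilon_r^{-1}q$ in the denominators of \eqref{eq4.20} against the prefactor $({-}\epsilon_r^{-1}q,-\epsilon_r^{-1}aq)_{\infty}^n$ so that only finite $q$-Pochhammers with asymptotics $c^{m}\epsilon_r^{-m}q^{m(m+1)/2}$ remain, and then collect the powers of $\epsilon_r$, $a=q^{\alpha}$, $t^2=q^k$ and $q$ into $q^{B}$ — the paper merely organizes the same bookkeeping differently, first rewriting $\big(a_ra_sq^{i}t^{2(n_0+i-1)}\big)_{\infty}^{-1}=\big(a_ra_st^{2(n_0+i-1)}\big)_{i}\,\big(a_ra_st^{2(n_0+i-1)}\big)_{\infty}^{-1}$ and then taking two separate limits. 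The only (inconsequential) slip is your mention of $\sum i^3$: since each $m_i$ is linear in $i$, the exponent sums are quadratic and only $\sum i$ and $\sum i^2$ are needed.
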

\begin{proof}
Since
$G(a_1,a_2,a_3,a_4)$ is invariant under the permutation of $a_1$, $a_2$, $a_3$, $a_4$,
we may assume $ (a_1,a_2,a_3,a_4) = \big(\epsilon^{-1}q^{\frac{1}{2}},-aq^{\frac{1}{2}},
\epsilon bq^{\frac{1}{2}}, -q^{\frac{1}{2}}\big)$. In~\eqref{eq4.20} note first that
\begin{equation*}
 \prod_{i=1}^{n_0}\frac{1}{ \big(a_ra_st^{2(i-1)}\big)_{\infty}}
\prod_{i=1}^{n_1}\frac{1}{ \big(a_ra_sq^{i}t^{2(n_0+i-1)}\big)_{\infty}}
=\prod_{i=1}^n \frac{1}{\big(a_ra_st^{2(i-1)}\big)_{\infty}}
\prod_{i=1}^{n_1}\big(a_ra_st^{2(n_0+i-1)}\big)_{i}.
\end{equation*}
In view of $ a_1a_2= -\epsilon_r^{-1}aq$, $a_1a_4 = -\epsilon_r^{-1}q $, one has
\begin{gather*}
\lim_{r\to \infty} \epsilon_r^{n_1(n_1+1)}\prod_{i=1}^{n_1}\big(a_1a_2t^{2(n_0+i-1)}\big)_{i}\,
\big(a_1a_4t^{2(n_0+i-1)}\big)_{i}
\\ \qquad
{}= a^{\frac{n_1(n_1+1)}{2}}
q^{\frac{n_1(n_1+1)(n_1+2)}{3}}(t^2)^{\frac{n_1(n_1+1)(3n-n_1-2)}{3}} ,
\\
\lim_{r \to \infty} \epsilon_r^{n(n-1)k} (-\epsilon_r^{-1}q,-\epsilon_r^{-1}aq)_{\infty}^n
\prod_{i=1}^{n}\frac{1}{\big(a_1a_2t^{2(i-1)},a_1a_4t^{2(i-1)}\big)_{\infty}}
= a^{\binom{n}{2}k}q^{\frac{n(n-1)(2n-1)}{6}k^2+\binom{n}{2}k},
\end{gather*}
 from which the expression for $B$ follows.
The limit of the other factors of $G(a_1,a_2,a_3,a_4)$ is readily observed and one obtains
the first equality of~\eqref{eq6.3}. Rewriting this in terms of $\Gamma_q$ is also straightforward.
Note that the factor $ \frac{1-q^{k+1}}{1-q^{i(k+1)}} $
comes from the shift of $\Gamma_q(1+k)$ to $\Gamma_q(2+k)$:
\begin{equation*}
\frac{1}{\Gamma_q(1+k)}\frac{1-q^{i(k+1)}}{1-q^{k+1}}
=\frac{1}{\Gamma_q(2+k)}[i(k+1)]_q, \qquad i=1,\dots,n_1. \tag*{\qed}
\end{equation*}
 \renewcommand{\qed}{}
\end{proof}
Combining the limit formulas~\eqref{eq6.1}, \eqref{eq6.2} and~\eqref{eq6.3}, we arrive at

\begin{Theorem} \label{theorem6.2} 
We have\medskip

Case {\rm I:}
\begin{gather*}
\lim_{r\to \infty}\big({-}\epsilon^{-1}_r q, -\epsilon^{-1}_r a q\big)_{\infty}^n \epsilon_r^{n(n-1)k+n_1(n_1+1)}
\frac{1}{c_{\rho_1 \rho_1} } \langle E_{\rho_1}, E_{\rho_1}\rangle
\\ \qquad
{} = \frac{q^{B_I}}{(1-q)^n(q)_{\infty}^{2n}}
\prod_{i=1}^{n_1} \frac{1-q^{ik}}{1-q^k} \frac{1-q^{k+1}}{1-q^{i(k+1)}}
\\ \qquad\phantom{=}
{}\times \prod_{i=1}^{n_0} \frac{\Gamma_q(1+ik)\Gamma_q(\alpha +1+(i-1)k)\Gamma_q(\beta+1+(i-1)k)}
 {\Gamma_q(1+k)\Gamma_q(\alpha+\beta+2+(n_0+i-2)k)}
 \\ \qquad\phantom{=}
{}\times \prod_{i=1}^{n_1}\frac{[(k+1)i]_q \,\Gamma_q(i+(n_0+i) k)}{\Gamma_q(2+k)}
 \Gamma_q(\alpha +i+1+(n_0+i-1)k)
 \\[-1ex] \qquad\phantom{=\times \prod_{i=1}^{n_1}}
{}\times \Gamma_q(\beta +i+1+(n_0+i-1)k)\Gamma_q(\alpha+\beta+i+2+(n_0+i-2)k)
\\[-2ex] \qquad\phantom{=}
{}\times \prod_{i=1}^{2n_1}\Gamma_q(\alpha+\beta +i+2+(2n_0+i-2)k )^{-1},
 \end{gather*}

Case {\rm II:}
 \begin{gather*}
\lim_{r\to \infty}\big({-}\epsilon^{-1}_r q, -\epsilon^{-1}_r a q\big)_{\infty}^n
 \epsilon_r^{n(n-1)k+n_1(n_1+1)+n} \frac{1}{c_{\rho_1 \rho_1} }
 \langle E_{\rho_1}, E_{\rho_1}\rangle
 \\ \qquad
{}= \frac{q^{B_{II}}}{(1-q)^n(q)_{\infty}^{2n}} \prod_{i=1}^{n_1}
 \frac{1-q^{ik}}{1-q^k} \frac{1-q^{k+1}}{1-q^{i(k+1)}}
 \\ \qquad\phantom{=}
{}\times \prod_{i=1}^{n_0} \frac{\Gamma_q(1+ik)\Gamma_q(\alpha +1+(i-1)k)\Gamma_q(\beta+1+(i-1)k)}
 {\Gamma_q(1+k)\Gamma_q(\alpha+\beta+2+(n_0+i-2)k)}
 \\ \qquad\phantom{=}
{}\times \prod_{i=1}^{n_1}\frac{[(k+1)i]_q \,\Gamma_q(i+(n_0+i) k)}{\Gamma_q(2+k)}
 \Gamma_q(\alpha +i+1+(n_0+i-1)k)
 \\[-1ex] \qquad\phantom{=\times \prod_{i=1}^{n_1}}
{}\times \Gamma_q(\beta +i+1+(n_0+i-1)k)\Gamma_q(\alpha+\beta+i+2+(n_0+i-2)k)
\\[-2ex] \qquad\phantom{=}
{}\times \prod_{i=1}^{2n_1}\Gamma_q(\alpha+\beta +i+2+(2n_0+i-2)k )^{-1},
 \end{gather*}
 where
 \begin{gather*}
 B_{I} =B-\frac{n_1(n_1+3)}{2}-(\alpha+\beta)n_1-\bigg(n_0 n_1+\frac{n_1(n_1-1)}{2}\bigg)k,
 \\
 B_{II} =B+ \frac{n_1(n_1+1)}{2}+n +\bigg( n_1(n-1)+\frac{n_0(n_0-1)}{2} \bigg) k.
\end{gather*}
\end{Theorem}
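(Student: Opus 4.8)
The plan is to exploit the multiplicative factorization
\[
\frac{1}{c_{\rho_1\rho_1}}\langle E_{\rho_1},E_{\rho_1}\rangle = F(a_1,a_2,a_3,a_4)\,G(a_1,a_2,a_3,a_4)
\]
recorded in \eqref{eq4.18}, which splits the limit into an $F$-part and a $G$-part. The whole regularizing factor attached to the norm---namely $\big({-}\epsilon_r^{-1}q,-\epsilon_r^{-1}aq\big)_\infty^n\,\epsilon_r^{n(n-1)k+n_1(n_1+1)}$, together with the extra $\epsilon_r^n$ in \emph{Case}~II---should be distributed between the two pieces so that each acquires a finite limit. All of the $\epsilon_r$-divergence of the norm sits in $G$, through the Pochhammer symbols built from $a_1a_2=-\epsilon_r^{-1}aq$ and $a_1a_4=-\epsilon_r^{-1}q$; accordingly I would attach the Pochhammer factor and the power $\epsilon_r^{n(n-1)k+n_1(n_1+1)}$ to $G$, which is exactly the combination whose limit is computed in the preceding Proposition as \eqref{eq6.3}. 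Since $G$ is symmetric in $a_1,a_2,a_3,a_4$, that limit is valid in both \emph{Case}~I and \emph{Case}~II. The factor $F$ then carries no Pochhammer divergence: in \emph{Case}~I it has the finite limit \eqref{eq6.1}, while in \emph{Case}~II the residual power $\epsilon_r^n$ is attached to $F$, giving the finite limit \eqref{eq6.2}.

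With the regularizing factor thus partitioned, each of the two pieces converges to a finite limit, so the limit of the regularized norm is simply the product of the two limits: in \emph{Case}~I the product of \eqref{eq6.1} and \eqref{eq6.3}, and in \emph{Case}~II the product of \eqref{eq6.2} and \eqref{eq6.3}. No further convergence argument is needed here, since these are limits of convergent scalar sequences and products of limits are limits of products.

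What remains is purely exponent bookkeeping. Using $t^2=q^k$, $a=q^\alpha$, $b=q^\beta$, the prefactors \eqref{eq6.1} and \eqref{eq6.2} become pure powers of $q$, which I would fold into the factor $q^B$ of \eqref{eq6.3}. For \emph{Case}~I this amounts to checking that
\[
-(\alpha+\beta)n_1-\frac{n_1(n_1-1)}{2}-2n_1-\Big(n_0n_1+\frac{n_1(n_1-1)}{2}\Big)k = B_I-B,
\]
which follows from the elementary identity $\frac{n_1(n_1-1)}{2}+2n_1=\frac{n_1(n_1+3)}{2}$; for \emph{Case}~II the analogous collection yields the $q$-exponent $\frac{n_1(n_1+1)}{2}+n+\big(n_1(n-1)+\frac{n_0(n_0-1)}{2}\big)k=B_{II}-B$ directly from \eqref{eq6.2}. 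The $\Gamma_q$-products in \eqref{eq6.3} are untouched by $F$, so they pass through unchanged, producing the two displayed formulas. The only step requiring genuine care is this substitution-and-collection of $q$-exponents, where a stray factor of $t^2=q^k$ or a mishandled sign in $a,b$ could introduce an off-by-one error in $B_I$ or $B_{II}$; the rest is immediate once the factorization is in hand.
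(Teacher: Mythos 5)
Your proposal is correct and follows essentially the same route as the paper: the paper's proof of Theorem~\ref{theorem6.2} is precisely to combine the factorization~\eqref{eq4.18}, the limits~\eqref{eq6.1} and~\eqref{eq6.2} of the $F$-factor, and the limit~\eqref{eq6.3} of the regularized $G$-factor (valid in both cases by the symmetry of $G$ in $a_1,a_2,a_3,a_4$), then collect $q$-exponents. Your exponent bookkeeping, including the identity $\frac{n_1(n_1-1)}{2}+2n_1=\frac{n_1(n_1+3)}{2}$ for $B_I$ and the direct reading of $B_{II}-B$ from~\eqref{eq6.2}, checks out against the stated formulas.
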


\begin{Remark} \label{remark6.3} 
In Sections \ref{section5} and \ref{section6} we have taken the scaling limit of the quadratic norm formula of specific
Koornwinder polynomial to obtain $q$-integral formula.
As stated in introduction, this would be considered as a specific but nonsymmetric case of
Stokman's procedure \cite{St1}, in which he, in particular, deduced the quadratic norm formulas of
the little and big $q$-Jacobi polynomials from that of symmetric Koornwinder polynomials.
Hence it would be desirable to consider the scaling limit of the full nonsymmetric Koornwinder polynomials,
which, in particular, should give {\it nonsymmetric} little $q$-Jacobi polynomials and its quadratic norm formula
(see \cite{St1, SK} for the symmetric case). Moreover partial antisymmetrization of some specific polynomial
of this type might lead to the evaluation of the integral \eqref{eq7.6} in Theorem \ref{theorem7.4} below. Studies on
the scaling limit of the nonsymmetric Koornwinder polynomials are left for future work.
We note that in~\cite{B} (see also \cite{BDF}) particular cases of Theorem \ref{theorem7.4} were affirmed
using partial antisymmetrization of nonsymmetric Macdonald polynomials.
\end{Remark}

 \section[q-Selberg integral]{$\boldsymbol{q}$-Selberg integral} \label{section7}

 We rewrite the infinite series given in Theorem~\ref{theorem5.4} in terms of a $q$-integral.

We shall use the following lemma on antisymmetric functions. This is a special case of Kadell's formula
\cite[formula~(4.7)]{Kad}, and has been appeared also in \cite[p.~1479]{Hab}, \cite[Lemma~3.1]{W2}
and \cite[Lemma 9]{B}. We write $[n]_q != [1]_q\cdots [n]_q$.

 \begin{Lemma} 
 Let $n=n_0+n_1$ with $n_1\ge 2$ and $ f(t_1,\dots,t_n)$ be antisymmetric with respect to
 the variables $t_{n_0+1},\dots, t_n$. Then
 \begin{gather*}
 \int_{[0,1]^n} \prod_{n_0+1\le i< j\le n}(t_i-Qt_j) f(t_1,\dots,t_n) \, {\rm d}_qt_1 \cdots {\rm d}_qt_n
 \\ \qquad
{}= \frac{[n_1]_{Q} !}{n_1!} \int_{[0,1]^n} \prod_{n_0+1\le i< j\le n}(t_i-t_j)
 f(t_1,\dots,t_n) \, {\rm d}_qt_1 \cdots {\rm d}_qt_n.
 \end{gather*}
 \end{Lemma}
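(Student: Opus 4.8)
The plan is to reduce the general coefficient $[n_1]_Q!/n_1!$ to the constant $1/n_1!$ by observing that both sides are $q$-integrals of the \emph{same} antisymmetric function against the Vandermonde-type product, and that the factor $\prod_{n_0+1\le i<j\le n}(t_i-Qt_j)$ can be symmetrized over the symmetric group $S_{n_1}$ acting on the last $n_1$ variables. Since $f$ is antisymmetric in $t_{n_0+1},\dots,t_n$, the integral is unchanged under any permutation of these variables after one inserts the sign of that permutation; so I would replace the integrand by its average over $S_{n_1}$, turning $\prod(t_i-Qt_j)$ into a manifestly antisymmetric object times $f$.

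First I would write, for $\sigma\in S_{n_1}$ acting on $\{n_0+1,\dots,n\}$, the identity
\begin{equation*}
\int_{[0,1]^n}\prod_{n_0+1\le i<j\le n}(t_i-Qt_j)\,f\,{\rm d}_qt
=\operatorname{sgn}(\sigma)\int_{[0,1]^n}\sigma\!\left(\prod_{n_0+1\le i<j\le n}(t_i-Qt_j)\right)f\,{\rm d}_qt,
\end{equation*}
valid because permuting the dummy $q$-integration variables leaves the integral invariant and $\sigma f=\operatorname{sgn}(\sigma)f$. Averaging over all $\sigma\in S_{n_1}$ then gives
\begin{equation*}
\int_{[0,1]^n}\prod_{n_0+1\le i<j\le n}(t_i-Qt_j)\,f\,{\rm d}_qt
=\frac{1}{n_1!}\int_{[0,1]^n}\left(\sum_{\sigma\in S_{n_1}}\operatorname{sgn}(\sigma)\,\sigma\!\prod_{n_0+1\le i<j\le n}(t_i-Qt_j)\right)f\,{\rm d}_qt.
\end{equation*}
The key algebraic step is then to evaluate the antisymmetrized product: the function $\prod_{n_0+1\le i<j\le n}(t_i-Qt_j)$ is, up to reordering, a monomial-times-Vandermonde expression whose antisymmetrization is a scalar multiple of the plain Vandermonde $\prod_{n_0+1\le i<j\le n}(t_i-t_j)$.

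The heart of the matter is the combinatorial identity
\begin{equation*}
\sum_{\sigma\in S_{n_1}}\operatorname{sgn}(\sigma)\,\sigma\!\prod_{n_0+1\le i<j\le n}(t_i-Qt_j)
=[n_1]_Q!\,\prod_{n_0+1\le i<j\le n}(t_i-t_j),
\end{equation*}
which is precisely Kadell's formula \cite[formula~(4.7)]{Kad} (also appearing in \cite{Hab,W2,B}). Substituting this into the averaged integral yields the stated factor $[n_1]_Q!/n_1!$, and setting $Q=1$ in the same identity reproduces the ordinary Vandermonde on the right-hand side, completing the proof. \textbf{The main obstacle} is verifying this antisymmetrization identity: one expands the product, tracks how each monomial $Q^{(\text{number of descents})}$ contributes under the sign-weighted sum, and recognizes the resulting generating function over $S_{n_1}$ as the $q$-factorial $[n_1]_Q!$ via the standard $\sum_{\sigma}Q^{\operatorname{inv}(\sigma)}=[n_1]_Q!$ evaluation; since this is a known lemma I would simply cite it rather than reprove it.
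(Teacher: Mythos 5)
Your argument is correct, but note that the paper itself offers no proof to compare against: it states the lemma and cites it as a special case of Kadell's formula (4.7), with pointers to Habsieger, Warnaar and Baratta. What you have written is essentially the standard proof found in that cited literature, so your route is a genuine self-contained argument where the paper relies on a citation. Both of your steps are sound: permuting the dummy variables in the $q$-integral (which is a sum over $(j_1,\dots,j_n)\in\mathbb{Z}_{\ge 0}^n$ weighted by $q^{j_1+\cdots+j_n}$, hence invariant under relabelling of the indices) is legitimate, and combined with $\sigma f=\mathrm{sgn}(\sigma)f$ it yields your averaging identity; the heart of the matter is then the polynomial identity
\begin{equation*}
\sum_{\sigma\in S_{n_1}}\mathrm{sgn}(\sigma)\,\sigma\Bigl(\prod_{n_0+1\le i<j\le n}(t_i-Qt_j)\Bigr)=[n_1]_Q!\prod_{n_0+1\le i<j\le n}(t_i-t_j),
\end{equation*}
which is true: antisymmetrization annihilates every monomial with a repeated exponent, the surviving monomials are exactly the permutations $t^{w\delta}$ of the staircase $\delta=(n_1-1,\dots,1,0)$, their coefficients in the product are $(-Q)^{\mathrm{inv}(w)}$, and $\sum_{w}\mathrm{sgn}(w)(-Q)^{\mathrm{inv}(w)}=\sum_{w}Q^{\mathrm{inv}(w)}=[n_1]_Q!$. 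Once this is substituted you are done; the closing remark about setting $Q=1$ is redundant, since the identity $\int Pf=\frac{1}{n_1!}\int\bigl(\sum_\sigma \mathrm{sgn}(\sigma)\sigma P\bigr)f=\frac{[n_1]_Q!}{n_1!}\int\Delta f$ is already the statement of the lemma.

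Two small corrections. First, the exponent of $Q$ carried by a permutation is its number of \emph{inversions}, not of \emph{descents}: the descent statistic generates the Eulerian polynomial, not $[n_1]_Q!$; your own final formula $\sum_\sigma Q^{\mathrm{inv}(\sigma)}=[n_1]_Q!$ is the correct one, so this is a slip of terminology rather than of substance. Second, the attribution is off: Kadell's formula (4.7) is (a generalization of) the integral statement of the lemma itself, not the antisymmetrization identity displayed above; that identity is a classical alternant expansion, which you should either cite separately or prove by the inversion-counting argument you sketch.
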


This lemma implies
 \begin{gather}
\int_{[0,1]^n} \prod_{n_0+1\le i< j\le n}(t_i-Q_1t_j) f(t_1,\dots,t_n) \, {\rm d}_qt_1 \cdots {\rm d}_qt_n \nonumber
\\ \qquad
{}= \frac{[n_1]_{Q_1} !}{[n_1]_{Q_2} !} \int_{[0,1]^n} \prod_{n_0+1\le i< j\le n}(t_i-Q_2t_j)
 f(t_1,\dots,t_n) \, {\rm d}_qt_1 \cdots {\rm d}_qt_n.\label{eq7.1}
 \end{gather}

\begin{Proposition} \label{proposition7.2} 
We have\medskip

Case {\rm I:}
\begin{gather*}
\sum_{s_1,\dots,s_n=0}^{\infty}\prod_{i=1}^{n} a^{s_i}q^{((n-1)k +1)s_i}
\frac{(q^{s_i+1})_{\infty}}{(bq^{s_i+1})_{\infty}}
\prod_{1\le i< j\le n}\big(q^{s_i-s_j}, q^{-s_i+s_j+1}\big)_k
\\ \qquad
 {}\times \prod_{i=n_0+1}^nq^{s_i}(1-bq^{s_i+1})
 \prod_{n_0+1\le i<j \le n}\big(t^{-2}q^{s_i}-q^{s_j}\big)\big(q^{s_i}-t^{-2}q^{s_j}\big)
\\ \qquad
{} =\frac{(-1)^{\binom{n}{2}k}q^{\binom{n}{2}\frac{k(k-1)}{2}-\binom{n_1}{2}k}}{(1-q)^n}
 \prod_{i=1}^{n_1} \frac{1-q^{ik}}{1-q^k} \frac{1-q^{k+1}}{1-q^{i(k+1)}}
\\ \qquad\hphantom{=}
{}\times \int_{[0,1]^n} \prod_{i=n_0+1}^n t_i\big(1-q^{\beta+1}t_i\big)
 \prod_{n_0+1\le i<j\le n}\big(t_i-q^{-k}t_j\big)\big(t_i-q^{k+1}t_j\big)
 \\ \qquad\hphantom{=\times \int_{[0,1]^n}}
{}\times \prod_{i=1}^n t_i^{\alpha}\frac{(qt_i)_{\infty}}{\big(q^{\beta+1}t_i\big)_{\infty}}
\prod_{1\le i<j\le n}
 t_i^{2k}\bigg(q^{1-k}\frac{t_j}{t_i}\bigg)_{2k} \, {\rm d}_qt_1 \cdots {\rm d}_qt_n,
\end{gather*}

Case {\rm II:}
\begin{gather*}
\sum_{s_1,\dots,s_n=0}^{\infty}\prod_{i=1}^{n} a^{s_i}q^{((n-1)k +1)s_i}
\frac{\big(q^{s_i+1}\big)_{\infty}}{(bq^{s_i+1})_{\infty}} \prod_{1\le i< j\le n}\big(q^{s_i-s_j+1}, q^{-s_i+s_j}\big)_k
 \\ \qquad
{}\times\prod_{i=n_0+1}^nq^{s_i}\big(1-q^{s_i}\big)
 \prod_{n_0+1\le i<j \le n}\big(t^2q^{s_i}-q^{s_j}\big)\big(q^{s_i}-t^2q^{s_j}\big),
 \\ \qquad
{} =\frac{(-1)^{\binom{n}{2}k}q^{\binom{n}{2} \frac{k(k+1)}{2}+\binom{n_1}{2}(k+1)}}{(1-q)^n}
\prod_{i=1}^{n_1} \frac{1-q^{ik}}{1-q^k} \frac{1-q^{k+1}}{1-q^{i(k+1)}}
\\ \qquad\hphantom{=}
{}\times \int_{[0,1]^n} \prod_{i=n_0+1}^n t_i(1-t_i)
 \prod_{n_0+1\le i<j\le n}\big(t_i-q^{-k-1}t_j\big)\big(t_i-q^{k}t_j\big)
 \\ \qquad\hphantom{=\times\int_{[0,1]^n}}
{}\times \prod_{i=1}^n t_i^{\alpha}\frac{(qt_i)_{\infty}}{\big(q^{\beta+1}t_i\big)_{\infty}}
\prod_{1\le i<j\le n} t_i^{2k}\Bigl(q^{-k}\frac{t_j}{t_i}\Bigr)_{2k} \,
 {\rm d}_qt_1 \cdots {\rm d}_qt_n.
 \end{gather*}
\end{Proposition}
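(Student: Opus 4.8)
The plan is to read each summand of the infinite series as the value of a $q$-integrand at $t_i=q^{s_i}$, using the definition $\int_{[0,1]^n}F\,{\rm d}_qt_1\cdots{\rm d}_qt_n=(1-q)^n\sum_sF(q^{s_1},\dots,q^{s_n})q^{s_1+\cdots+s_n}$, and then to pass from the integrand so obtained to the one in the statement by means of the antisymmetrization identity~\eqref{eq7.1}. So the proof splits into a (bookkeeping) term-by-term matching and a single application of~\eqref{eq7.1}.

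First I would do the term-by-term matching. Substituting $t_i=q^{s_i}$ and using $a=q^\alpha$, $b=q^\beta$, the per-variable factors $t_i^\alpha(qt_i)_\infty/(q^{\beta+1}t_i)_\infty$ and, for $i>n_0$, $t_i(1-q^{\beta+1}t_i)$ in Case I (resp.\ $t_i(1-t_i)$ in Case II) reproduce exactly the corresponding factors of the series, while the measure contributes $q^{\sum_is_i}$. For the Selberg kernel I use $t_i^{2k}(q^{1-k}t_j/t_i)_{2k}=\prod_{l=1-k}^{k}(t_i-q^lt_j)$ in Case I and $t_i^{2k}(q^{-k}t_j/t_i)_{2k}=\prod_{l=-k}^{k-1}(t_i-q^lt_j)$ in Case II; evaluating at $t=q^s$ and factoring powers of $q$ from each pair shows that the pair contributes $(q^{s_i-s_j},q^{-s_i+s_j+1})_k$ (resp.\ $(q^{s_i-s_j+1},q^{-s_i+s_j})_k$) up to the explicit sign $(-1)^k$ and an explicit power of $q$, the accumulated $q^{k\sum_{i<j}(s_i+s_j)}=q^{(n-1)k\sum_is_i}$ combining with the measure to give $\prod_iq^{((n-1)k+1)s_i}$. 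This identifies the series, up to the constant $(-1)^{\binom n2k}q^{\binom n2\binom k2}$ (resp.\ $(-1)^{\binom n2k}q^{\binom n2\binom{k+1}2}$) and the factor $1/(1-q)^n$, with $\int\tilde F\,{\rm d}_qt$, where $\tilde F$ is the ``natural'' integrand whose special-pair factor is $\prod_{n_0+1\le i<j\le n}(q^{-k}t_i-t_j)(t_i-q^{-k}t_j)$ in Case I and $\prod_{n_0+1\le i<j\le n}(q^kt_i-t_j)(t_i-q^kt_j)$ in Case II. No antisymmetrization is needed here; everything is a direct evaluation at $t=q^s$.

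Next I would convert $\tilde F$ to the integrand $F$ of the statement. The two integrands share the factor $(t_i-q^{-k}t_j)$ (Case I), resp.\ $(t_i-q^kt_j)$ (Case II), so after extracting $q^{\mp k\binom{n_1}2}$ from $q^{\mp k}t_i-t_j=q^{\mp k}(t_i-q^{\pm k}t_j)$ they differ only in the single product $\prod(t_i-q^kt_j)$ versus $\prod(t_i-q^{k+1}t_j)$ (Case I), resp.\ $\prod(t_i-q^{-k}t_j)$ versus $\prod(t_i-q^{-k-1}t_j)$ (Case II). This is exactly the setting of~\eqref{eq7.1}: I write $\tilde F=\mathrm{const}\cdot\prod_{n_0+1\le i<j\le n}(t_i-Q_1t_j)\,h$ with the complementary factor $h$ carrying the shared factor and the full Selberg kernel, and replace $Q_1$ by $Q_2$. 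The decisive point is that $h$ is antisymmetric in $t_{n_0+1},\dots,t_n$: the per-variable factors and the ``mixed'' Selberg factors (pairs with an index $\le n_0$) are symmetric in these variables, while the shared factor times the special-pair Selberg kernel recombine into $\prod_{n_0+1\le i<j\le n}\prod_{l=-k}^{k}(t_i-q^lt_j)$, and $\prod_{l=-k}^{k}(t_i-q^lt_j)$ is antisymmetric under $t_i\leftrightarrow t_j$ because the swap produces $(-1)^{2k+1}q^{\sum_{l=-k}^kl}=-1$. Applying~\eqref{eq7.1} yields the ratio $[n_1]_{Q_1}!/[n_1]_{Q_2}!=\prod_{i=1}^{n_1}\frac{1-q^{ik}}{1-q^k}\frac{1-q^{k+1}}{1-q^{i(k+1)}}$, after recording in Case II the extra $q$-powers from $[i]_{q^{-k}}=q^{k(1-i)}[i]_{q^k}$, which contribute a factor $q^{\binom{n_1}2}$.

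The hard part is conceptual rather than computational: recognizing the antisymmetric factor $\prod_{l=-k}^{k}(t_i-q^lt_j)$ hidden in the product of the shared factor and the Selberg kernel, since this is what makes~\eqref{eq7.1} applicable and what forces the precise shape $(t_i-q^{-k}t_j)(t_i-q^{k+1}t_j)$ (resp.\ $(t_i-q^{-k-1}t_j)(t_i-q^kt_j)$) of the extra factor appearing in the statement. The rest is sign and power-of-$q$ bookkeeping: I must check that the per-pair constants $(-1)^kq^{-\binom k2}$ (resp.\ $(-1)^kq^{-\binom{k+1}2}$), the rescaling $q^{\mp k\binom{n_1}2}$, and the $q$-power from $[n_1]_{q^{-k}}!/[n_1]_{q^{-k-1}}!$ assemble into the exponents $\binom n2\frac{k(k-1)}2-\binom{n_1}2k$ in Case~I and $\binom n2\frac{k(k+1)}2+\binom{n_1}2(k+1)$ in Case~II, exactly as displayed.
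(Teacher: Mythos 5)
Your proof is correct and takes essentially the same route as the paper: the paper's own (very terse) proof likewise reads each summand as the $q$-integrand evaluated at $t_i=q^{s_i}$ and then applies~\eqref{eq7.1} with $Q_1=q^{k}$, $Q_2=q^{k+1}$ in Case~I and $Q_1=q^{-k}$, $Q_2=q^{-k-1}$ in Case~II. Your explicit verification that the complementary factor is antisymmetric (via $\prod_{l=-k}^{k}(t_i-q^l t_j)$) and your sign/$q$-power bookkeeping, including the extra $q^{\binom{n_1}{2}}$ from $[n_1]_{q^{-k}}!/[n_1]_{q^{-k-1}}!$ in Case~II, simply make explicit what the paper leaves implicit.
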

\begin{proof}
These are rewrites in which use is made of~\eqref{eq7.1}. For \emph{Case} I, we use~\eqref{eq7.1} with $ Q_1=
q^k$, $Q_2=q^{k+1}$ and for \emph{Case} II, $ Q_1=q^{-k}$, $Q_2=q^{-k-1}$.
\end{proof}
 Combining Theorems~\ref{theorem5.4} and~\ref{theorem6.2} and Proposition~\ref{proposition7.2},
 we arrive at a generalization of the $q$-Selberg integral.
 \begin{Theorem} \label{theorem7.3} 
Let $n=n_0+n_1$ with $n_1\ge 2$. For $\mathrm{Re}\, \alpha>0$, $\mathrm{Re}\, \beta>0$, $k\in \mathbb{Z}_{\ge 0}$,
we have
 \begin{gather}
\int_{[0,1]^n} \prod_{i=n_0+1}^n t_i\big(1-q^{\beta}t_i\big) \prod_{n_0+1\le i<j\le n}\big(t_i-q^{-k}t_j\big)\big(t_i-q^{k+1}t_j\big)\nonumber
 \\ \qquad
{}\times \prod_{i=1}^n t_i^{\alpha-1}\frac{(qt_i)_{\infty}}{\big(q^{\beta}t_i\big)_{\infty}}\prod_{1\le i<j\le n}
t_i^{2k}\Bigl(q^{1-k}\frac{t_j}{t_i}\Bigr)_{2k}\, {\rm d}_qt_1\cdots {\rm d}_qt_n\nonumber
 \\ \qquad
{} =q^{C_{I}}
 \prod_{i=1}^{n_0}\frac{\Gamma_q(1+ik)\Gamma_q(\alpha +(i-1)k)\Gamma_q(\beta+(i-1)k)}
 {\Gamma_q(1+k)\Gamma_q(\alpha+\beta+(n_0+i-2)k)} \nonumber
 \\ \qquad\hphantom{=}
{}\times\prod_{j=1}^{n_1}\frac{[(1+k)j]_q \Gamma_q(j+(n_0+j) k)}{\Gamma_q(2+k)}
 \prod_{j=1}^{n_1}\Gamma_q(\alpha +j+(n_0+j-1)k)\nonumber
 \\ \qquad\hphantom{=}
{}\times \Gamma_q(\beta +j+(n_0+j-1)k)
\Gamma_q(\alpha+\beta+j+(n_0+j-2)k) \nonumber
\\ \qquad\hphantom{=}
\times\prod_{j=1}^{2n_1}\big\{\Gamma_q(\alpha+\beta +j+(2n_0+j-2)k ) \big\}^{-1},\label{eq7.2}
 \end{gather}
 where
 \begin{gather}
 C_{I} = \frac{n_1(n_1-1)(2n_1-1)}{6}+\binom {n_1} 2 \alpha +
\bigg(n_0n_1^2+\frac{n_1(n_1-1)(4n_1-5)}{6} \bigg)k \nonumber
\\ \hphantom{ C_{I} =}
 {}+\binom n 2 \alpha k+ 2\binom n 3 k^2 ,\label{eq7.3}
\end{gather}
and
 \begin{gather}
 \int_{[0,1]^n} \prod_{i=n_0+1}^n t_i(1-t_i)
 \prod_{n_0+1\le i<j\le n}\big(t_i-q^{-k-1}t_j\big)\big(t_i-q^{k}t_j\big)\nonumber
 \\ \qquad
 {}\times \prod_{i=1}^n t_i^{\alpha-1}\frac{(qt_i)_{\infty}}{\big(q^{\beta}t_i\big)_{\infty}}
 \prod_{1\le i<j\le n} t_i^{2k}\Bigl(q^{-k}\frac{t_j}{t_i}\Bigr)_{2k} \, {\rm d}_qt_1 \cdots {\rm d}_qt_n \nonumber
 \\ \qquad
 {} = q^{C_{II}}
 \prod_{i=1}^{n_0}\frac{\Gamma_q(1+ik)\Gamma_q(\alpha +(i-1)k)\Gamma_q(\beta+(i-1)k)}
 {\Gamma_q(1+k)\Gamma_q(\alpha+\beta+(n_0+i-2)k)} \nonumber
 \\ \qquad\hphantom{=}
{}\times\prod_{j=1}^{n_1}\frac{[(1+k)j]_q \Gamma_q(j+(n_0+j) k)}{\Gamma_q(2+k)}
 \prod_{j=1}^{n_1}\Gamma_q(\alpha +j+(n_0+j-1)k)\nonumber
 \\ \qquad\hphantom{=}
{}\times\Gamma_q(\beta +j+(n_0+j-1)k) \Gamma_q(\alpha+\beta+j+(n_0+j-2)k) \nonumber
\\ \qquad\hphantom{=}
{}\times\prod_{j=1}^{2n_1}\big\{\Gamma_q(\alpha+\beta +j+(2n_0+j-2)k ) \big\}^{-1},\label{eq7.4}
 \end{gather}
where
 \begin{gather}
 C_{II}= \frac{n_1(n_1^2+2)}{3}+\frac{n_1(n_1+1)}{2} \alpha +
\bigg(\frac{n_1(n_1+1)(3n-n_1-2)}{3}-\binom {n_1} 2 -\binom n 2 \bigg)k \nonumber
\\ \hphantom{ C_{II}=}
{}+ \binom n 2 \alpha k + 2\binom n 3 k^2.\label{eq7.5}
\end{gather}
 \end{Theorem}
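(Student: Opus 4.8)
The plan is to read off Theorem~\ref{theorem7.3} from the quadratic norm identity~\eqref{eq4.16} by taking the scaling limit of its two sides, each of which has already been isolated. For each fixed $r$, equation~\eqref{eq4.16} asserts the equality of $\langle\chi_1,\chi_1\rangle$ and $c_{\rho_1\rho_1}^{-1}\langle E_{\rho_1},E_{\rho_1}\rangle$, where on the left I read the bilinear form along the deformed contour $C_r$ of Section~\ref{section5} (the analytic continuation that retains inside the poles crossing the unit circle as $\epsilon_r\to 0$). First I would specialize to \emph{Case}~I (resp.\ \emph{Case}~II), multiply both sides by $(-\epsilon_r^{-1}q,-\epsilon_r^{-1}aq)_\infty^n\epsilon_r^{n(n-1)k+n_1(n_1+1)}$ (resp.\ the same factor with an extra $\epsilon_r^n$), and let $r\to\infty$. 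Since the two sides are equal for every $r$, their limits coincide; the limit of the left side is supplied by Theorem~\ref{theorem5.4} and that of the right side by Theorem~\ref{theorem6.2}. Equating them produces, in either case, an identity $A_{I/II}\,S_{I/II}=\Pi$, where $S_{I/II}$ is the infinite series of Theorem~\ref{theorem5.4} and $\Pi$ is the $q$-gamma product of Theorem~\ref{theorem6.2}.

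Next I would use Proposition~\ref{proposition7.2} to replace $S_{I/II}$ by a constant multiple of the $q$-integral to be evaluated, substitute into $A_{I/II}\,S_{I/II}=\Pi$, and solve for that integral. Almost everything then cancels: the product $\prod_{i=1}^{n_1}\frac{1-q^{ik}}{1-q^k}\frac{1-q^{k+1}}{1-q^{i(k+1)}}$ and the factor $(1-q)^{-n}$ occur both in $\Pi$ and in the constant of Proposition~\ref{proposition7.2}, the sign $(-1)^{\binom{n}{2}k}$ of $A_{I/II}$ matches the one in Proposition~\ref{proposition7.2}, and the factor $(q)_\infty^{-2n}$ is common to $\Pi$ and $A_{I/II}$. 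What survives is the $q$-gamma product of Theorem~\ref{theorem6.2}—already of the exact shape of the right side of~\eqref{eq7.2} and~\eqref{eq7.4}—times a single elementary power of $q$. The one genuinely substantive task is to assemble that power of $q$ from its sources, namely the exponents $B_{I/II}$ of Theorem~\ref{theorem6.2}, the $q$-powers buried in $A_{I/II}$ (in \emph{Case}~I these include $t^{n_1(n_1-1)}=q^{kn_1(n_1-1)/2}$ and $b^{-n_1}=q^{-\beta n_1}$), and the explicit $q$-power of Proposition~\ref{proposition7.2}, and to verify it equals $C_I$ of~\eqref{eq7.3} (resp.\ $C_{II}$ of~\eqref{eq7.5}) after setting $t^2=q^k$. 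I expect this exponent bookkeeping to be the main obstacle; it is delicate because the entire $\beta$-dependence must disappear—the $-\beta n_1$ inside $B_I$ being exactly cancelled by the $b^{-n_1}$ coming from $A_I$—so as to leave the $\beta$-free constants of~\eqref{eq7.3} and~\eqref{eq7.5}.

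Finally, the integrals furnished by Proposition~\ref{proposition7.2} carry $t_i^{\alpha}$ and $(q^{\beta+1}t_i)_\infty$, whereas Theorem~\ref{theorem7.3} is phrased with $t_i^{\alpha-1}$ and $(q^{\beta}t_i)_\infty$; I would therefore relabel $\alpha\mapsto\alpha-1$ and $\beta\mapsto\beta-1$ throughout, under which the $\Gamma_q$-arguments of Theorem~\ref{theorem6.2} shift precisely into those displayed in~\eqref{eq7.2} and~\eqref{eq7.4} and the linear-in-$\alpha$ terms of $B_{I/II}$ acquire the constant corrections recorded in $C_{I/II}$. Because the scaling limit is justified only for $0<|a|,|b|<1$, i.e.\ for $\operatorname{Re}\alpha,\operatorname{Re}\beta>0$ before relabeling, this argument establishes~\eqref{eq7.2} and~\eqref{eq7.4} for $\operatorname{Re}\alpha,\operatorname{Re}\beta>1$. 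To reach the full range $\operatorname{Re}\alpha,\operatorname{Re}\beta>0$ stated in the theorem I would invoke analytic continuation: the $q$-integral on the left converges and is holomorphic for $\operatorname{Re}\alpha>0$, $\operatorname{Re}\beta>0$, the right side is meromorphic in $(\alpha,\beta)$, and the two agree on the open set $\{\operatorname{Re}\alpha>1,\operatorname{Re}\beta>1\}$, hence on the whole connected region $\{\operatorname{Re}\alpha>0,\operatorname{Re}\beta>0\}$.
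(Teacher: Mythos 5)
Your proposal is correct and follows essentially the same route as the paper: the paper's proof of Theorem~\ref{theorem7.3} likewise equates the Case~I/II limits of the two sides of~\eqref{eq4.16} via Theorems~\ref{theorem5.4} and~\ref{theorem6.2}, substitutes Proposition~\ref{proposition7.2}, and reduces everything to checking that the accumulated power of $q$ (from $A_{I/II}$, $B_{I/II}$ and the prefactor in Proposition~\ref{proposition7.2}) equals $q^{C_{I}(\alpha+1)}$, resp.\ $q^{C_{II}(\alpha+1)}$ --- exactly your relabeling $\alpha\mapsto\alpha-1$, $\beta\mapsto\beta-1$, including the cancellation of the $\beta$-dependence between $b^{-n_1}$ and $B_I$. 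Your closing analytic-continuation step (passing from $\operatorname{Re}\alpha,\operatorname{Re}\beta>1$ to $\operatorname{Re}\alpha,\operatorname{Re}\beta>0$) is the one point where you are more explicit than the paper, which asserts the wider range without comment; your argument for it is sound since both sides are holomorphic on the full region.
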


 \begin{proof}
 Let us write $C_{I}$, $C_{II}$ in~\eqref{eq7.3} and~\eqref{eq7.5} as $ C_{I}(\alpha)$, $C_{II}(\alpha) $
 respectively. Then, in view of Theorems~\ref{theorem5.4} and~\ref{theorem6.2} and
 Proposition~\ref{proposition7.2},
 it only remains to check that
 \begin{gather*}
 A_{I}^{-1} \frac{q^{B_I}}{(1-q)^n (q)_{\infty}^{2n}}
 \biggl(\frac{(-1)^{\binom{n}{2}k}q^{\binom{n}{2}\frac{k(k-1)}{2}-\binom{n_1}{2}k}}{(1-q)^n} \biggr)^{-1} = q^{C_{I}(\alpha+1)},
 \\
 A_{II}^{-1} \frac{q^{B_{II}}}{(1-q)^n (q)_{\infty}^{2n}}
 \biggl( \frac{(-1)^{\binom{n}{2}k}q^{\binom{n}{2} \frac{k(k+1)}{2}+\binom{n_1}{2}(k+1)}}{(1-q)^n}\biggr)^{-1} = q^{C_{II}(\alpha+1)}.
 \end{gather*}
This completes the proof of Theorem~\ref{theorem7.3}.
 \end{proof}

 As stated in the introduction, our studies stem from the Baker--Forrester constant term (ex-\nolinebreak )
 conjecture~\eqref{eq1.3}.
We rewrite the conjecture in the form of an integral evaluation. Note first that for
a general Laurent polynomial $f(t_1,\dots,t_n)$
there holds \cite[Proposition 4.1]{BF}:
\begin{gather*}
\biggl(\frac{\Gamma_q(x+y)}{\Gamma _q(x)\Gamma_q(y)} \biggr)^n\int_{[0,1]^n} \prod_{i=1}^n t_i^{x-1}
\frac{(qt_i)_{\infty}}{(q^yt_i)_{\infty}}
\, f(t_1,\dots,t_n)\,{\rm d}_qt_1\cdots {\rm d}_qt_n
\\ \qquad
{} =\biggl(\frac{(q)_a(q)_b}{(q)_{a+b}} \biggr)^n \mbox{CT}_{\{t\}}
\prod_{i=1}^n(t_i)_a\, \bigg(\frac{q}{t_i} \bigg)_b \, f\big(q^{-(b+1)}t_1,\dots,q^{-(b+1)}t_n\big)
\end{gather*}
provided $x=-b$ and $y=a+b+1$. Then put
\begin{equation*}
f=f_{n_0,n}=\prod_{n_0+1\le i<j \le n}\bigg(1-q^k \frac{t_i}{t_j} \bigg)\bigg(1-q^{k+1}
\frac{t_j}{t_i} \bigg)\prod_{1\le i<j\le n}
\bigg(q\frac{t_j}{t_i} \bigg)_k \bigg(\frac{t_i}{t_j} \bigg)_k.
\end{equation*}
In view of the constant term identity~\eqref{eq1.3}, this gives
\begin{gather*}
 \biggl(\frac{\Gamma_q(x+y)}{\Gamma _q(x)\Gamma_q(y)} \biggr)^n\int_{[0,1]^n}
\prod_{i=1}^n t_i^{x-1}\frac{(qt_i)_{\infty}}{(q^yt_i)_{\infty}}
\, f_{n_0,n}(t_1,\dots,t_n)\,{\rm d}_qt_1\cdots {\rm d}_qt_n
\\ \qquad
{} =\biggl(\frac{(q)_a(q)_b}{(q)_{a+b}} \biggr)^n (\mbox{RHS of \eqref{eq1.3}}),
\end{gather*}
where $b=-x $ and $ a=x+y-1 $. One can rewrite this, after some calculations (cf.\ \cite {Kan2}),
to get the reformulation
of the results of Baker--Forrester and K\'{a}rolyi et al.:

\begin{Theorem} [Baker--Forrester \cite{BF}, K\'{a}rolyi et al.\ \cite{Kar}] \label{theorem7.4} 
Let $n=n_0+n_1$ with $n_1\ge 2$. For $\mathrm{Re}\, x>0, \, \mathrm{Re}\,y>0,\, k\in \mathbb{Z}_{\ge 0}$,
we have
\begin{gather}
\int_{[0,1]^n} \prod_{i=1}^{n_0} t_i^{n_1-1} \prod_{n_0+1\le i<j\le n}\big(t_i - q^{-k}t_j\big)\big(t_ i- q^{k+1}t_j\big)
\prod_{i=1}^n t_i^{x-1}\frac{(qt_i)_{\infty}}{(q^{y}t_i)_{\infty}} \nonumber
\\ \qquad
{}\times \prod_{1\le i<j\le n}
t_i^{2k}\Bigl(q^{1-k}\frac{t_j}{t_i}\Bigr)_{2k}\, {\rm d}_qt_1\cdots {\rm d}_qt_n \nonumber
\\ \qquad
{} = q^{D} \prod_{i=1}^{n_0}\frac{\Gamma_q(1+ik)\Gamma_q(x+(n_1+i-1)k+n_1-1)\Gamma_q(y+(i-1)k)}
{\Gamma_q(1+k)\Gamma_q(x+y+(n+i-2)k+n_1-1)} \nonumber
\\ \qquad\hphantom{=}
{}\times \prod_{j=1}^{n_1}\frac{[(1+k)j]_q \Gamma_q(j+(n_0+j)k)}
{\Gamma_q(2+k)} \nonumber
\\ \qquad\hphantom{=}
{}\times \prod_{j=1}^{n_1}\frac{\Gamma_q(x+j-1+(j-1)k)\Gamma_q(y+j-1+(n_0+j-1)k)}
{\Gamma_q(x+y+2n_1-j-1+(2n-j-1)k)},\label{eq7.6}
\end{gather}
where
\begin{gather*}
 D = 2\binom {n_1} 3+ \binom {n_1} 2 x +\frac{n_1-1}{2}\bigg( n(n-1)+\frac{n_1(n_1-5)}{3} \bigg)k + \binom n 2 x k
 \\ \hphantom{ D = }
{} +\frac{1}{2}\biggl( \frac{n(n\!-\!1)(2n\!-\!3)}{2} -n_0 n_1(n\!-\!1)
 -\frac{n_0(n_0\!-\!1)(2n_0\!-\!1)+n_1(n_1\!-\!1)(2n_1\!-\!1)}{6} \biggr) k^2.
 \end{gather*}
\end{Theorem}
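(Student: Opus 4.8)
The plan is to read \eqref{eq7.6} off the Baker--Forrester constant term identity \eqref{eq1.3}, which K\'arolyi et al.\ \cite{Kar} have now established for $a,b,k\in\mathbb Z_{\ge0}$, by feeding the specific Laurent polynomial $f=f_{n_0,n}$ into the integral--constant-term dictionary \cite[Proposition~4.1]{BF}. The first observation is that $f_{n_0,n}$ is homogeneous of degree zero, since each of its factors depends only on ratios $t_i/t_j$; hence the dilation $t\mapsto q^{-(b+1)}t$ appearing on the constant-term side of \cite[Proposition~4.1]{BF} acts trivially on $f_{n_0,n}$, and that side collapses to exactly $((q)_a(q)_b/(q)_{a+b})^n$ times the constant term occurring in \eqref{eq1.3}. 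Inserting the value of that constant term from \eqref{eq1.3} gives a closed evaluation of $(\Gamma_q(x+y)/\Gamma_q(x)\Gamma_q(y))^n\int_{[0,1]^n}\prod_i t_i^{x-1}\frac{(qt_i)_\infty}{(q^yt_i)_\infty}f_{n_0,n}\,{\rm d}_qt$, with $b=-x$ and $a=x+y-1$.

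The second step is to reshape the integrand into the form displayed in \eqref{eq7.6}. Here I would invoke the elementary identities $(1-q^kt_i/t_j)(1-q^{k+1}t_j/t_i)=-q^k t_i^{-1}t_j^{-1}(t_i-q^{-k}t_j)(t_i-q^{k+1}t_j)$ and $(qt_j/t_i)_k(t_i/t_j)_k=(-1)^k q^{\binom k2} t_i^{-k}t_j^{-k}\big(t_i^{2k}(q^{1-k}t_j/t_i)_{2k}\big)$, which rewrite $f_{n_0,n}$ as the difference-product integrand of \eqref{eq7.6} times the scalar $(-1)^{k\binom n2+\binom{n_1}2}q^{\binom k2\binom n2+k\binom{n_1}2}$ and the monomial $\prod_{l=1}^n t_l^{-(n-1)k}\prod_{l=n_0+1}^n t_l^{-(n_1-1)}$. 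Absorbing this monomial into $\prod_i t_i^{x-1}$ shifts the common exponent to $x-(n-1)k-(n_1-1)$, and, because the factor $\prod_{l>n_0}t_l^{-(n_1-1)}$ runs only over $l>n_0$, leaves behind precisely the asymmetric factor $\prod_{i=1}^{n_0}t_i^{n_1-1}$; this is the mechanism that produces that prefactor. Renaming the shifted exponent back to $x$ identifies the integral with the left-hand side of \eqref{eq7.6}.

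It then remains to express the right-hand side --- the value of $M_{n_0}(a,b,k;q)$ and the product part of \eqref{eq1.3}, together with the prefactors $((q)_a(q)_b/(q)_{a+b})^n$, $(\Gamma_q(x)\Gamma_q(y)/\Gamma_q(x+y))^n$, and the scalar from the previous step --- entirely in terms of $\Gamma_q$-functions with arguments linear in the shifted $x,y$, collecting every power of $q$ into the single exponent $D$ and checking that the sign $(-1)^{k\binom n2+\binom{n_1}2}$ cancels against the signs coming from the $q$-Pochhammer ratios. This is the calculation indicated in \cite{Kan2}; it is routine in principle but is the most error-prone part, and matching it against the stated closed form of $D$ is where I expect most of the labour to lie.

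Finally, the parameter range. The constant term identity furnishes \eqref{eq7.6} only at the discrete integer values $b=-x$, $a=x+y-1\in\mathbb Z_{\ge0}$ (which after the shift all satisfy $\operatorname{Re} x\le0$), so the passage to arbitrary $\operatorname{Re} x>0$, $\operatorname{Re} y>0$ must be made by analytic continuation. For each fixed positive integer $y$ the factor $(qt_i)_\infty/(q^yt_i)_\infty=(qt_i)_{y-1}$ is a polynomial, so the $q$-integral reduces to a rational function of $q^x$ of controlled degree and the right-hand side (its $q$-gamma ratios together with the monomial $q^D$ in $q^x$) is rational in $q^x$ as well; agreement on enough admissible integer points then forces agreement as rational functions, after which a Carlson-type growth argument promotes the identity from integer $y$ to all $\operatorname{Re} y>0$. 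This analytic-continuation step, rather than any single algebraic manipulation, is the genuine obstacle, exactly as in the original proofs of the $q$-Selberg integral \cite{As,Hab,Kad}; since the result is attributed to \cite{BF,Kar}, one may also simply quote the continuation already carried out there.
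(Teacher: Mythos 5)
Your proposal is correct and follows essentially the same route as the paper: the paper also derives \eqref{eq7.6} by feeding $f_{n_0,n}$ into the integral/constant-term dictionary of \cite[Proposition~4.1]{BF}, invoking the identity \eqref{eq1.3} established by K\'arolyi et al.\ \cite{Kar}, and then reshaping the result, compressing your second and third steps into the phrase ``one can rewrite this, after some calculations (cf.\ \cite{Kan2})''. The details you supply---the degree-zero homogeneity of $f_{n_0,n}$ that trivializes the dilation $t\mapsto q^{-(b+1)}t$, the Pochhammer rewriting identities producing the factor $\prod_{i=1}^{n_0}t_i^{n_1-1}$ and the exponent shift, and the analytic continuation from the integer points $b=-x$, $a=x+y-1$ to $\operatorname{Re}x>0$, $\operatorname{Re}y>0$---are exactly what the paper delegates to \cite{Kan2} and to the attribution of the theorem to \cite{BF,Kar}.
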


\begin{Remark} \qquad
\begin{enumerate}\itemsep=0pt
\item[$1.$] As the integrand of~\eqref{eq7.6} suggests, it might be possible to prove Theorem~\ref{theorem7.4}
in a similar way to the proof of Theorem~\ref{theorem7.3}, if one finds some specific Koornwinder polynomial
whose partial antisymmerization
is the discriminant like $ \prod_{n_0+1\le i<j\le n}(1 - q^{-k}t_j/t_i) $.

\item[$2.$] In \cite{Kan2}, we proved that the evaluation of the integral of~\eqref{eq7.6} follows from a conjecture
for the expansion of certain symmetric polynomial related to the integrand of~\eqref{eq7.6} in terms of
Macdonald polynomials. This conjecture, i.e., certain terms do not appear in the expansion, was
checked for small values of $n_0$ and $n_1$. A similar problem, namely the expansion of the square
of the difference product in terms of Schur or Jack polynomials
or of other orthogonal polynomials has been investigated rather extensively,
see \cite{BBL, DGIL} and references therein.
\end{enumerate}
\end{Remark}
 Next we consider the $q=1$ case. Let us write
 \begin{equation*}
 D_{\alpha,\beta,\gamma}(t)=\prod_{i=1}^n t_i^{\alpha-1}(1-t_i)^{\beta-1}
 \prod_{1\le i<j\le n}
|t_i-t_j|^{2\gamma}.
 \end{equation*}
 From Theorems~\ref{theorem7.3} and~\ref{theorem7.4} one obtains

 \begin{Corollary}
For $\mathrm{Re}\, \alpha>0$, $\mathrm{Re}\,\beta >0$, $\mathrm{Re} \, \gamma \ge 0 $, we have
\begin{gather}
 \int_{[0,1]^n}\prod_{i=n_0+1}^n t_i(1-t_i)\prod_{n_0+1\le i<j\le n}(t_i-t_j)^2 \,
 D_{\alpha,\beta,\gamma}(t) \, {\rm d}t_1\cdots {\rm d}t_n \nonumber
 \\ \qquad
{}= \prod_{i=1}^{n_0}\frac{\Gamma(1+i\gamma)\Gamma(\alpha +(i-1)\gamma)\Gamma(\beta+(i-1)\gamma)}
 {\Gamma(1+\gamma)\Gamma(\alpha+\beta+(n_0+i-2)\gamma)}
 \prod_{j=1}^{n_1}\frac{(1+\gamma)j\,\Gamma(j+(n_0+j) \gamma)}{\Gamma(2+\gamma)} \nonumber
 \\ \qquad\hphantom{=}
{}\times\prod_{j=1}^{n_1} \Gamma(\alpha\! +j\!+(n_0\!+j\!-1)\gamma)
 \Gamma(\beta\! +j\!+(n_0+j\!-1)\gamma) \Gamma(\alpha+\beta+j +(n_0+j\!-2)\gamma) \nonumber
 \\ \qquad\hphantom{=}
{}\times\prod_{j=1}^{2n_1}
 \{\Gamma(\alpha+\beta +j+(2n_0+j-2)\gamma ) \}^{-1},\label{eq7.7}
 \\
 \int_{[0,1]^n} \prod_{i=1}^{n_0} t_i^{n_1-1}\prod_{n_0+1\le i<j\le n}(t_i-t_j)^2 \,
 D_{\alpha,\beta,\gamma}(t) \, {\rm d}t_1\cdots {\rm d}t_n \nonumber
 \\ \qquad
{}= \prod_{i=1}^{n_0}
\frac{\Gamma(1+i\gamma)\Gamma(\alpha+(n_1+i-1)\gamma+n_1-1)\Gamma(\beta+(i-1)\gamma)
}{\Gamma(1+\gamma)\Gamma(\alpha+\beta+(n+i-2)\gamma+n_1-1)}\label{eq7.8}
\\ \qquad\hphantom{=}
{}\times \prod_{j=1}^{n_1}
\frac{(1\!+\gamma)j\,\Gamma(j\!+(n_0\!+j) \gamma)
\Gamma(\alpha\!+j\!-1\!+(j\!-1)\gamma)\Gamma(\beta+j-1+(n_0+j\!-1)\gamma)}
{\Gamma(2+\gamma)\Gamma(\alpha+\beta+2n_1-j-1+(2n-j\!-1)\gamma)}. \nonumber
 \end{gather}
 \end{Corollary}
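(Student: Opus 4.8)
The plan is to derive \eqref{eq7.7} and \eqref{eq7.8} from the $q$-integral evaluations \eqref{eq7.2} and \eqref{eq7.6} by letting $q\to 1^-$, first for integer $\gamma=k$ and then, by analytic continuation, for arbitrary $\gamma$ with $\operatorname{Re}\gamma\ge 0$. First I would keep $t^2=q^k$ (so $\gamma=k\in\mathbb Z_{\ge 0}$) and track the classical limit of each factor of the integrands. The standard limit $\lim_{q\to 1}\frac{(q^az;q)_\infty}{(q^bz;q)_\infty}=(1-z)^{b-a}$ gives $\frac{(qt_i)_\infty}{(q^\beta t_i)_\infty}\to(1-t_i)^{\beta-1}$; the $q$-difference product $t_i^{2k}(q^{1-k}t_j/t_i)_{2k}=t_i^{2k}\prod_{l=0}^{2k-1}(1-q^{1-k+l}t_j/t_i)$ collapses to $(t_i-t_j)^{2k}=|t_i-t_j|^{2\gamma}$; the prefactors $(1-q^\beta t_i)$ and $(t_i-q^{-k}t_j)(t_i-q^{k+1}t_j)$ tend to $(1-t_i)$ and $(t_i-t_j)^2$; and analogously for \eqref{eq7.6} with $x=\alpha$, $y=\beta$. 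Simultaneously the $q$-integral $(1-q)^n\sum f(q^{j_1},\dots,q^{j_n})q^{j_1+\cdots+j_n}$ is a Riemann sum converging to $\int_{[0,1]^n}f\,{\rm d}t$. On the right-hand side $q^{C_I},q^{C_{II}}\to 1$, $[x]_q\to x$ and $\Gamma_q\to\Gamma$, so \eqref{eq7.2} and \eqref{eq7.6} pass term-by-term to \eqref{eq7.7} and \eqref{eq7.8} for integer $\gamma$. The care needed here is only to justify exchanging $q\to 1$ with the summation and integration, which follows from a dominated-convergence estimate: the $q$-dependent factors are uniformly bounded on $[0,1]^n$, and the singular part is dominated by the $q$-independent integrable function $\prod_i t_i^{\operatorname{Re}\alpha-1}(1-t_i)^{\operatorname{Re}\beta-1}$.

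Second, to reach arbitrary $\gamma$ I would invoke analytic continuation in $\gamma$ with $\alpha,\beta$ fixed. The left-hand sides of \eqref{eq7.7} and \eqref{eq7.8} are holomorphic on $\operatorname{Re}\gamma>0$ (differentiate under the integral sign, using that $\prod_{i<j}|t_i-t_j|^{2\gamma}$ is bounded by $1$ and integrable against $\prod_i t_i^{\alpha-1}(1-t_i)^{\beta-1}$ and the polynomial prefactors), while the right-hand sides are meromorphic in $\gamma$ as ratios of $\Gamma$-functions. Having established equality at every non-negative integer, I would apply Carlson's uniqueness theorem to the difference of the two sides. The feature making this admissible is the same balance as in the ordinary Selberg integral: the numerator and denominator $\Gamma$-products carry equal total coefficient of $\gamma$, so the leading $\gamma\log\gamma$ contributions in Stirling's formula cancel and the right-hand side is merely of exponential type; once one checks this type is below $\pi$, Carlson forces the two meromorphic functions to agree throughout $\operatorname{Re}\gamma>0$, and continuity extends the identity to the boundary $\operatorname{Re}\gamma=0$.

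The main obstacle I anticipate is this continuation step rather than the limit computation. One must control the growth of the $\Gamma$-product right-hand side as $\gamma\to\infty$ precisely enough to certify exponential type strictly less than $\pi$, and pair it with a matching boundedness bound for the integral, so that Carlson's theorem genuinely applies; the cancellation of the super-exponential Stirling terms is what makes this feasible. By contrast the $q\to 1$ limit is routine once the uniform dominating bound above is in hand. (Equivalently, one could continue the $q$-identities \eqref{eq7.2} and \eqref{eq7.6} in $k$ at fixed $0<q<1$ by the same Carlson argument and only then let $q\to 1$; in either order the delicate point is the same growth estimate.)
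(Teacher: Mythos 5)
Your proposal follows essentially the same route as the paper: establish the integer case $\gamma=k\in\mathbb{Z}_{\ge 0}$ by letting $q\to 1$ in Theorems~\ref{theorem7.3} and~\ref{theorem7.4}, then extend to $\operatorname{Re}\gamma\ge 0$ by Carlson's theorem, using the balanced $\Gamma$-products on the right and the trivial bound $|t_i-t_j|^{2\operatorname{Re}\gamma}\le 1$ on the left. The only cosmetic difference is that the paper invokes a weak form of Carlson's theorem (bounded and holomorphic in $\operatorname{Re}\gamma\ge 0$, vanishing on $\mathbb{Z}_{\ge 0}$), verifying boundedness of both sides via Stirling, whereas you invoke the full version with an exponential-type estimate below $\pi$ — a slightly heavier but equally valid variant of the same step.
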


\begin{proof}
 The case that $\gamma$ is a nonnegative integer follows from Theorems~\ref{theorem7.3} and~\ref{theorem7.4} by taking the limit $q\to 1$.
 For complex $\gamma$ with $\mathrm{Re} \,\gamma \ge 0$, one can apply a weak form of
 Carlson's theorem: Suppose $f(z)$ be bounded and holomorphic in $\mathrm{Re} \, z \ge 0$
 and $f(z)=0$ for $z\in \mathbb{Z}_{\ge 0}$.
 Then $f(z)=0$ identically. For fixed $\alpha$ and $\beta$, the integrals on
 the left sides of~\eqref{eq7.7} and~\eqref{eq7.8} are clearly bounded and
 holomorphic in $\mathrm{Re} \, \gamma \ge 0$. The products on the right side can be verified to
 be bounded in $\mathrm{Re}\, \gamma \ge 0$ by Sirling's formula.
 \end{proof}

As a limit case of these fomulae, we obtain a generalization of Mehta integral \cite{FW,MD}.
 Let~${\rm d}\mu$ be the standard Gaussian measure on $\mathbb R^n$:
 \begin{equation*}
 {\rm d}\mu=(2\pi)^{-\frac{n}{2}}\prod_{i=1}^{n}{\rm e}^{-\frac{x_i^2}{2}}{\rm d}x_1\cdots {\rm d}x_n.
 \end{equation*}
 Then
\begin{Corollary} \label{corollary7.7} 
 For $\mathrm{Re}\,\gamma \ge 0 $, we have
\begin{gather} 
 \int_{\mathbb R^n} \prod_{n_0+1\le i<j \le n}(x_i-x_j)^2 \prod_{1\le i<j \le n} |x_i-x_j|^{2\gamma} {\rm d}\mu\nonumber
 \\ \qquad
{} = \prod_{i=1}^{n_0}\frac{\Gamma(1+i\gamma)}{\Gamma(1+\gamma)}
\prod_{j=1}^{n_1}\frac{j(1+\gamma)\Gamma(j+(n_0+j)\gamma)}{\Gamma(2+\gamma)}.\label{eq7.9}
\end{gather}
 \end{Corollary}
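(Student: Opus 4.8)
The plan is to obtain \eqref{eq7.9} as a scaling limit of the $q=1$ Selberg-type evaluation \eqref{eq7.7}, taking the two exponents equal, $\alpha=\beta=\lambda$, and letting $\lambda\to\infty$ after the centering substitution $t_i=\tfrac12+\tfrac{x_i}{2\sqrt{2\lambda}}$ for $i=1,\dots,n$. This is the classical route from the Selberg integral to the Mehta integral, adapted to the extra factor $\prod_{n_0+1\le i<j\le n}(t_i-t_j)^2$ and the boundary weights $\prod_{i=n_0+1}^n t_i(1-t_i)$. Note that \eqref{eq7.7} is already available for all $\operatorname{Re}\gamma\ge0$, so no further Carlson-type extension will be needed once the limit is justified.

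First I would track the left-hand side of \eqref{eq7.7}. Under the substitution one has $t_i(1-t_i)=\tfrac14\bigl(1-\tfrac{x_i^2}{2\lambda}\bigr)$, $t_i-t_j=\tfrac{x_i-x_j}{2\sqrt{2\lambda}}$ and $dt_i=\tfrac{dx_i}{2\sqrt{2\lambda}}$, the range $t_i\in[0,1]$ becoming $x_i\in[-\sqrt{2\lambda},\sqrt{2\lambda}]$. Collecting the explicit constants, the integrand of \eqref{eq7.7} becomes
\[
\Bigl(\tfrac14\Bigr)^{n(\lambda-1)+n_1}(8\lambda)^{-\binom{n_1}2-\gamma\binom n2-\frac n2}\prod_{i=1}^n\Bigl(1-\tfrac{x_i^2}{2\lambda}\Bigr)^{\lambda-1}\prod_{n_0+1\le i<j\le n}(x_i-x_j)^2\prod_{1\le i<j\le n}|x_i-x_j|^{2\gamma}
\]
times $\prod dx_i$, up to a factor $\prod_{i=n_0+1}^n\bigl(1-\tfrac{x_i^2}{2\lambda}\bigr)\to1$. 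Since $\bigl(1-\tfrac{x_i^2}{2\lambda}\bigr)^{\lambda-1}\to e^{-x_i^2/2}$ pointwise, and for $\lambda\ge2$ is bounded by $e^{-x_i^2/4}$ (using $\log(1-u)\le-u$), dominated convergence against the integrable majorant $\prod_i e^{-x_i^2/4}\prod_{n_0+1\le i<j\le n}(x_i-x_j)^2\prod_{1\le i<j\le n}|x_i-x_j|^{2\operatorname{Re}\gamma}$ shows that the rescaled integral converges to $(2\pi)^{n/2}$ times the left-hand side of \eqref{eq7.9}.

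Next I would track the right-hand side of \eqref{eq7.7} at $\alpha=\beta=\lambda$. The factors independent of $\alpha,\beta$ are precisely the right-hand side of \eqref{eq7.9}, so it remains to show that the remaining $\lambda$-dependent product $P(\lambda)$, consisting of $2n$ numerator factors $\Gamma(\lambda+O(1))$ and $n$ denominator factors $\Gamma(2\lambda+O(1))$, cancels the explicit prefactor extracted above in the limit. Grouping $P(\lambda)$ into $n$ blocks of the form $\Gamma(\lambda+a)\Gamma(\lambda+b)/\Gamma(2\lambda+c)$ and applying Stirling together with the duplication formula, each block behaves like $\sqrt\pi\,2^{1-2\lambda-c}\lambda^{a+b-c-1/2}$; the dominant $2^{-2n\lambda}$ matches $(\tfrac14)^{n\lambda}$ from the left side, and I would check that the accumulated powers of $\lambda$ reproduce $(8\lambda)^{-\binom{n_1}2-\gamma\binom n2-n/2}$ and that the residual constant is exactly $(2\pi)^{n/2}$. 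Equating the two sides then yields \eqref{eq7.9}.

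The main obstacle is the last bookkeeping step: verifying through Stirling's asymptotics that the leftover power of $\lambda$ and the numerical constant from $P(\lambda)$ exactly reproduce the prefactor $(\tfrac14)^{n(\lambda-1)+n_1}(8\lambda)^{-\binom{n_1}2-\gamma\binom n2-n/2}(2\pi)^{n/2}$. The matching of the dominant exponential $2^{-2n\lambda}$ is automatic and reassuring, but the subleading powers of $\lambda$ and of $2$ require the careful grouping indicated above; the dominated-convergence justification, by contrast, is routine once the majorant $e^{-x_i^2/4}$ is in hand.
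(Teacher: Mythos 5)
Your proposal is correct and is essentially identical to the paper's own (one-line) proof, which reads: in \eqref{eq7.7} put $\alpha=\beta$, substitute $t_i=\bigl(1+x_i/\sqrt{2\alpha}\bigr)/2$, let $\alpha\to\infty$, and apply Stirling's formula. You have simply supplied the details the paper omits (the dominated-convergence majorant and the Stirling/duplication bookkeeping, which does close exactly as you predict: the powers of $2$ and $\lambda$ and the constant $(2\pi)^{n/2}$ all cancel), so there is nothing further to add.
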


 \begin{proof}
 In~\eqref{eq7.7}, put $\alpha=\beta$, $t_i=\big(1+x_i/\sqrt{2\alpha}\big)/2$ and let $\alpha\to \infty$, and apply Stirling's for\-mu\-la.
 \end{proof}

 \begin{Remark} 
\eqref{eq7.8} leads to the same formula by the same way.
 \end{Remark}

We have also
 \begin{Corollary} 
 For $\mathrm{Re}\, \alpha>0$, $\mathrm{Re} \, \gamma \ge 0 $, we have 
\begin{gather}
 \int_{\mathbb R_{\ge 0}^n}\prod_{j=n_0+1}^n x_j\prod_{n_0+1\le i<j \le n}(x_i-x_j)^2
\prod_{1\le i<j \le n} |x_i-x_j|^{2\gamma}\prod _{i=1}^{n}x_i^{\alpha-1}{\rm e}^{-x_i}{\rm d}x \notag
\\ \qquad
{}= \prod_{i=1}^{n_0}
\frac{\Gamma(1+i\gamma)\Gamma(\alpha +(i-1)\gamma)}{\Gamma(1+\gamma)} \notag
\\ \qquad\hphantom{=}
{}\times\prod_{j=1}^{n_1}\frac{j(1+\gamma)\Gamma(j+(n_0+j)\gamma)\Gamma(\alpha +j+(n_0+j-1)
\gamma)}{\Gamma(2+\gamma)}, \label{eq7.10}
\\
 \int_{\mathbb R_{\ge 0}^n}\prod_{n_0+1\le i<j \le n}(x_i-x_j)^2 \prod_{1\le i<j \le n} |x_i-x_j|^{2\gamma}
\prod _{i=1}^{n}x_i^{\alpha-1}{\rm e}^{-x_i} {\rm d}x \notag
\\ \qquad
 {}= \prod_{i=1}^{n_0}\frac{\Gamma(1+i\gamma)\Gamma(\alpha +(i-1)\gamma)}{\Gamma(1+\gamma)}\notag
\\ \qquad\hphantom{=}
{}\times\prod_{j=1}^{n_1}
\frac{j(1+\gamma)\Gamma(j+(n_0+j)\gamma)\Gamma(\alpha +j-1+(n_0+j-1)\gamma)}{\Gamma(2+\gamma)}, \label{eq7.11}
\end{gather}
where ${\rm d}x={\rm d}x_1\cdots {\rm d}x_n$.
\end{Corollary}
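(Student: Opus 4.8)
The plan is to obtain both \eqref{eq7.10} and \eqref{eq7.11} as Laguerre-type scaling limits of the Selberg evaluations \eqref{eq7.7} and \eqref{eq7.8}, concentrating the Beta weight $\prod_i t_i^{\alpha-1}(1-t_i)^{\beta-1}$ at one of the endpoints of $[0,1]$ rather than about the midpoint as in the Mehta limit of Corollary~\ref{corollary7.7}. Since \eqref{eq7.7} and \eqref{eq7.8} are already valid for all $\mathrm{Re}\,\gamma\ge 0$, the limit can be performed directly at this level of generality, and the two formulas will arise from the two different Selberg integrals through the two opposite endpoints.

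For \eqref{eq7.10} I would start from \eqref{eq7.7} and substitute $t_i=x_i/\beta$, letting $\beta\to\infty$ with $\alpha$ and $\gamma$ fixed. Then $(1-t_i)^{\beta-1}\to \mathrm{e}^{-x_i}$ produces the exponential weight, while $t_i(1-t_i)\to x_i/\beta$ reproduces the extra factor $\prod_{i=n_0+1}^n x_i$, and each of $(t_i-t_j)^2$, $|t_i-t_j|^{2\gamma}$, $t_i^{\alpha-1}$ together with the measure contributes an explicit negative power of $\beta$. A routine count gives that the left side of \eqref{eq7.7} equals $\beta^{-E}$ times the integral in \eqref{eq7.10} up to a vanishing error, with $E=n_1^2+n\alpha+\gamma n(n-1)$. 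On the right side I would apply the asymptotics $\Gamma(\beta+c)/\Gamma(\beta+c')\sim\beta^{\,c-c'}$: the $\beta$-dependent $\Gamma$-factors in numerator and denominator of \eqref{eq7.7} are equal in number ($n_0+2n_1$ of each), so their quotient is asymptotic to $\beta^{-E}$ times exactly the $\beta$-free product appearing on the right of \eqref{eq7.10}. Multiplying through by $\beta^{E}$ and passing to the limit then yields \eqref{eq7.10}.

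For \eqref{eq7.11} I would instead begin from \eqref{eq7.8} and concentrate the mass at the endpoint $t=1$, setting $1-t_i=x_i/\alpha$ and letting $\alpha\to\infty$ with $\beta$ and $\gamma$ fixed. Here $t_i^{\alpha-1}\to\mathrm{e}^{-x_i}$ gives the weight, $(1-t_i)^{\beta-1}$ produces $x_i^{\beta-1}$, and crucially the prefactor $\prod_{i=1}^{n_0}t_i^{n_1-1}\to 1$, so that no extra power of $x$ survives; the Vandermonde factors again rescale by powers of $\alpha$. The same Stirling bookkeeping on the $\alpha$-dependent $\Gamma$-factors of \eqref{eq7.8} ($n$ in each of numerator and denominator) leaves precisely the $\alpha$-free, $\beta$-dependent product, and renaming the surviving parameter $\beta$ as $\alpha$ gives the right side of \eqref{eq7.11}. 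The main technical point, in both cases, is to justify interchanging the limit with the integral: I would use the elementary majorant $(1-u)^{N}\le \mathrm{e}^{-Nu}$ to bound the weight by a constant multiple of $\prod_i \mathrm{e}^{-x_i}$, and dominate the rescaled polynomial factors by the integrand of the limiting Laguerre--Selberg integral, which is absolutely integrable on $\mathbb R_{\ge 0}^n$ for $\mathrm{Re}\,\gamma\ge 0$, so that dominated convergence legitimizes the passage to the limit. As in Corollary~\ref{corollary7.7}, one may alternatively first establish the identities for integer $\gamma$ and then extend them to $\mathrm{Re}\,\gamma\ge 0$ by Carlson's theorem, thereby avoiding a direct control of complex powers.
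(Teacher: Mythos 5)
Your proposal is correct and follows essentially the same route as the paper: \eqref{eq7.10} is obtained from \eqref{eq7.7} by the substitution $t_i=x_i/\beta$ with $\beta\to\infty$ and Stirling asymptotics, while \eqref{eq7.11} comes from \eqref{eq7.8} by concentrating the weight at the endpoint $t=1$ (the paper phrases this as first replacing $t_i$ by $1-t_i$ and exchanging $\alpha$ and $\beta$, which is identical to your substitution $1-t_i=x_i/\alpha$ followed by renaming $\beta$ as $\alpha$). Your power count $E=n_1^2+n\alpha+\gamma n(n-1)$ and the matching of $\Gamma$-factor counts are accurate, and your dominated-convergence justification supplies detail the paper leaves implicit.
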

\begin{proof}
 For~\eqref{eq7.10}, put $t_i=x_i/\beta$ and let $\beta\to \infty$ in~\eqref{eq7.7}, and apply
 Stirling's formula. For~\eqref{eq7.11}, change variables
 $t_i$ with $1-t_i$ and exchange $\alpha$ and $\beta$ in~\eqref{eq7.8}. Then put $t_i=x_i/\beta$ and
 take the limit $\beta\to \infty$.
\end{proof}

These formulas give, as in \cite{AR} (cf.\ \cite[Theorem 8.3.3]{AAR}), integral formulas on the $n$-simplex.
Let $\Delta_n=\{ (x_1,\dots,x_n)\mid x_i\ge 0,\, i=1,\dots, n,
\, x_1+\cdots+x_n\le 1 \}$. We denote the right-hand sides of~\eqref{eq7.10} and~\eqref{eq7.11} by $\Gamma_1$, $\Gamma_2$ respectively. Then

\begin{Proposition}
 For $\mathrm{Re}\, \alpha>0$, $\mathrm{Re} \, \gamma \ge 0$, we have 
\begin{gather}
 \int_{\Delta_n}\,\, \prod_{j=n_0+1}^n x_j\prod _{i=1}^{n}x_i^{\alpha-1}\,
\bigg(1-\sum_{i=1}^{n}x_i\bigg)^{\beta-1}
\prod_{n_0+1\le i<j \le n}(x_i-x_j)^2 \prod_{1\le i<j \le n} |x_i-x_j|^{2\gamma}{\rm d}x \notag
\\ \qquad
{}= \frac{\Gamma(\beta)}{\Gamma(\beta+n\alpha+n_1^2+n(n-1)\gamma)} \Gamma_1, \label{eq7.12} \\
\int_{\Delta_n}\,\,\prod _{i=1}^{n}x_i^{\alpha-1}\, \bigg(1-\sum_{i=1}^{n}x_i\bigg)^{\beta-1}
\prod_{n_0+1\le i<j \le n}(x_i-x_j)^2 \prod_{1\le i<j \le n} |x_i-x_j|^{2\gamma}{\rm d}x \notag
\\ \qquad
{} = \frac{\Gamma(\beta)}{\Gamma(\beta+n\alpha+n_1(n_1-1)+n(n-1)\gamma)} \Gamma_2. \label{eq7.13}
\end{gather}
\end{Proposition}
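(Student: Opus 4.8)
The plan is to deduce both \eqref{eq7.12} and \eqref{eq7.13} from a single Beta--Gamma reduction that expresses a weighted simplex integral as a Gamma-multiple of the corresponding orthant integral. Concretely, for any factor $P(x)$ that is positively homogeneous I would prove
\[
\int_{\Delta_n}P(x)\prod_{i=1}^n x_i^{\alpha-1}\Big(1-\sum_{i=1}^n x_i\Big)^{\beta-1}{\rm d}x
=\frac{\Gamma(\beta)}{\Gamma(\beta+D)}\int_{\mathbb R_{\ge0}^n}P(x)\prod_{i=1}^n x_i^{\alpha-1}\,{\rm e}^{-\sum_{i=1}^n x_i}\,{\rm d}x ,
\]
where $D$ is the homogeneity degree of the density $P(x)\prod_i x_i^{\alpha-1}\,{\rm d}x$ under the dilation $x\mapsto\lambda x$ $(\lambda>0)$. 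The right-hand orthant integral is exactly $\Gamma_1$ (resp.\ $\Gamma_2$) upon taking $P(x)=\prod_{j=n_0+1}^n x_j\prod_{n_0+1\le i<j\le n}(x_i-x_j)^2\prod_{1\le i<j\le n}|x_i-x_j|^{2\gamma}$ (resp.\ $P(x)=\prod_{n_0+1\le i<j\le n}(x_i-x_j)^2\prod_{1\le i<j\le n}|x_i-x_j|^{2\gamma}$), since in \eqref{eq7.10} and \eqref{eq7.11} one has $\prod_i{\rm e}^{-x_i}={\rm e}^{-\sum_i x_i}$. So the two asserted formulas are the two specializations of the displayed identity.

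First I would settle the degree bookkeeping. Writing $d=\deg P$, the dilation multiplies $P(x)\prod_i x_i^{\alpha-1}\,{\rm d}x$ by $\lambda^{d+n(\alpha-1)+n}=\lambda^{d+n\alpha}$, so $D=d+n\alpha$. For the first $P$ one has $d=n_1+n_1(n_1-1)+\gamma n(n-1)=n_1^2+\gamma n(n-1)$, giving $D=n\alpha+n_1^2+n(n-1)\gamma$, which is precisely the argument of $\Gamma$ in the denominator of \eqref{eq7.12}; for the second $P$ one has $d=n_1(n_1-1)+\gamma n(n-1)$, giving $D=n\alpha+n_1(n_1-1)+n(n-1)\gamma$, matching \eqref{eq7.13}. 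Note that $|x_i-x_j|^{2\gamma}$ is positively homogeneous of real degree $2\gamma$ per pair even when $\gamma$ is not an integer, so the scaling argument does not require $P$ to be a genuine polynomial.

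To establish the displayed reduction I would introduce a slack variable and perform a polar change of variables. Multiplying the orthant integral by $\Gamma(\beta)=\int_0^\infty y^{\beta-1}{\rm e}^{-y}{\rm d}y$ gives
\[
\Gamma(\beta)\int_{\mathbb R_{\ge0}^n}P(x)\prod_{i=1}^n x_i^{\alpha-1}{\rm e}^{-\sum_i x_i}{\rm d}x
=\int_{\mathbb R_{\ge0}^{n+1}}P(x)\prod_{i=1}^n x_i^{\alpha-1}\,y^{\beta-1}\,{\rm e}^{-(\sum_i x_i+y)}\,{\rm d}x\,{\rm d}y .
\]
Then I would set $x_i=s\,u_i$ $(1\le i\le n)$ and $y=s\big(1-\sum_{i=1}^n u_i\big)$ with $s=\sum_{i=1}^n x_i+y>0$ and $(u_1,\dots,u_n)\in\Delta_n$; a direct block-determinant computation shows the Jacobian of this map equals $s^{n}$. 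Using $P(su)=s^{d}P(u)$ and collecting all powers of $s$, the total exponent is $d+n(\alpha-1)+(\beta-1)+n=D+\beta-1$, so the integral factorises as $\big(\int_0^\infty s^{D+\beta-1}{\rm e}^{-s}{\rm d}s\big)\big(\int_{\Delta_n}P(u)\prod_i u_i^{\alpha-1}(1-\sum_i u_i)^{\beta-1}{\rm d}u\big)=\Gamma(D+\beta)\cdot(\text{simplex integral})$. Dividing by $\Gamma(D+\beta)$ yields the reduction, and hence \eqref{eq7.12} and \eqref{eq7.13}.

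The only step requiring genuine care is the Jacobian evaluation and the accompanying power-counting in $s$; the remainder is routine bookkeeping. Convergence of every integral involved is guaranteed by $\operatorname{Re}\alpha>0$, $\operatorname{Re}\beta>0$, and $\operatorname{Re}\gamma\ge0$, which ensure integrability at the coordinate hyperplanes and finiteness of the $\Gamma$-factors, so all the manipulations are justified. This is precisely the $q=1$ analogue of the classical simplex--orthant passage used in \cite{AR} (cf.\ \cite[Theorem 8.3.3]{AAR}).
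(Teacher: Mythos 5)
Your proof is correct, and it reaches the result by a genuinely different (though closely related) mechanism than the paper's. The paper, following Askey--Richards~\cite{AR}, first isolates the scaling relation $L(\lambda)=\lambda^{-D}L(1)$ for the orthant integral $L(\lambda)$ with weight $\prod_i{\rm e}^{-\lambda x_i}$, multiplies it by $\lambda^{\beta+D-1}{\rm e}^{-\lambda}$ and integrates in $\lambda$, which produces an intermediate orthant integral with the rational kernel $\bigl(1+\sum_i x_i\bigr)^{-(\beta+D)}$, and only then reaches the simplex through the projective substitution $x_i=t_i\bigl(1-\sum_j t_j\bigr)^{-1}$, whose Jacobian $\bigl(1-\sum_i t_i\bigr)^{-n-1}$ must be computed separately. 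You instead run the classical Dirichlet--Liouville reduction in one stroke: append a slack variable $y$ carrying $\Gamma(\beta)$, pass to cone coordinates $x=su$, $y=s\bigl(1-\sum_i u_i\bigr)$ with block-determinant Jacobian $s^n$, and let the $s$-integration produce $\Gamma(\beta+D)$ outright. Both arguments rest on the same two pillars --- positive homogeneity of the non-exponential factor and Euler's integral for $\Gamma$ --- and your degree bookkeeping ($D=n\alpha+n_1^2+n(n-1)\gamma$ for \eqref{eq7.12}, $D=n\alpha+n_1(n_1-1)+n(n-1)\gamma$ for \eqref{eq7.13}) matches the exponents in the denominators, with the orthant integrals identified with $\Gamma_1$, $\Gamma_2$ via \eqref{eq7.10} and \eqref{eq7.11} exactly as in the paper. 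What your route buys: a single change of variables with a one-line determinant, no intermediate identity, and the explicit observation that only positive homogeneity of the prefactor is needed (relevant since $2\gamma$ need not be an integer) --- a point the paper's scaling step uses implicitly. What the paper's route buys: it records, as a byproduct, the evaluation of the orthant integral against the kernel $\bigl(1+\sum_i x_i\bigr)^{-(\beta+n\alpha+n_1^2+n(n-1)\gamma)}$, and it stays verbatim parallel to the argument of~\cite{AR} (cf.~\cite[Theorem~8.3.3]{AAR}) that it cites. You also correctly note that $\operatorname{Re}\beta>0$ is needed for convergence, a hypothesis the proposition leaves implicit through the presence of $\Gamma(\beta)$.
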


\begin{proof}
The proof is almost the same as the one given in \cite{AR} which we briefly reprise. Put
\begin{equation*}
L(\lambda) =\int_{\mathbb R_{\ge 0}^n}\prod_{j=n_0+1}^n x_j\prod_{n_0+1\le i<j \le n}(x_i-x_j)^2
\prod_{1\le i<j \le n} |x_i-x_j|^{2\gamma}\prod _{i=1}^{n}x_i^{\alpha-1}{\rm e}^{-\lambda x_i}{\rm d}x,
\end{equation*}
so that by a change of variables one has
\begin{equation*}
L(\lambda) =\lambda^{-(n\alpha + n_1^2+n(n-1)\gamma)} L(1).
\end{equation*}
Multiplying both sides by $\lambda^{\beta+n\alpha+n_1^2+n(n-1)\gamma-1}{\rm e}^{-\lambda} $
and integrating over $ [0,\infty) $ give
\begin{gather*}
 \int_{\mathbb R_{\ge 0}^n}\frac{\prod_{j=n_0+1}^n x_j\prod_{n_0+1\le i<j \le n}(x_i-x_j)^2
\prod_{1\le i<j \le n}
 |x_i-x_j|^{2\gamma}}{\bigl( 1+ \sum_{i=1}^n x_i\bigr)^{\beta+n\alpha+n_1^2+n(n-1)\gamma}}
\prod _{i=1}^{n}x_i^{\alpha-1}{\rm d}x
\\ \qquad
 {}=\frac{\Gamma(\beta)}{\Gamma(\beta+n\alpha+n_1^2+n(n-1)\gamma)} L(1).
\end{gather*}
Then by the change of the variables
\begin{equation*}
x_i=t_i\bigg( 1-\sum_{i=1}^{n} t_i \bigg)^{-1}
\end{equation*}
we have
\begin{gather*}
\int_{\Delta_n} \prod_{j=n_0+1}^n t_j\prod _{i=1}^{n}
t_i^{\alpha-1}\bigg(1-\sum_{i=1}^{n}t_i\bigg)^{\beta+n}
\prod_{n_0+1\le i<j \le n}(t_i-t_j)^2
\prod_{1\le i<j \le n} |t_i-t_j|^{2\gamma}
 |J| {\rm d}x
 \\ \qquad
{} = \frac{\Gamma(\beta)}{\Gamma(\beta+n\alpha+n_1^2+n(n-1)\gamma)} L(1),
\end{gather*}
where $J$ is the Jacobian. One can readily verify that
\begin{equation*}
J=\bigg( 1-\sum_{i=1}^{n} t_i \bigg)^{-n-1}.
\end{equation*}
This completes the proof of~\eqref{eq7.12}. The case~\eqref{eq7.13} is similar.
\end{proof}

By setting $x_i=y_i^2/2$ in~\eqref{eq7.10} and~\eqref{eq7.11} one also gets
\begin{gather*}
 \int_{\mathbb R^n}\!\prod_{i=n_0+1}^n \!\!x_i^2 \!\!\!\prod_{n_0+1\le i<j \le n}\!\!\big(x_i^2-x_j^2\big)^2\!\!\!
\prod_{1\le i<j \le n}\!\! \big|x_i^2-x_j^2\big|^{2\gamma}
\prod _{i=1}^{n}|x_i|^{2\alpha-1}{\rm e}^{-\frac{x_i^2}{2}}{\rm d}x
 = 2^{n_1^2+n\alpha + n(n-1)\gamma} \Gamma_1,
 \\
 \int_{\mathbb R^n}\! \prod_{n_0+1\le i<j \le n}\!\!\!\big(x_i^2-x_j^2\big)^2 \!\!\prod_{1\le i<j \le n} \!\! \big|x_i^2-x_j^2\big|^{2\gamma}
\prod _{i=1}^{n}|x_i|^{2\alpha-1}{\rm e}^{-\frac{x_i^2}{2}}{\rm d}x
= 2^{n_1(n_1-1)+n\alpha n+ n(n-1)\gamma} \Gamma_2.
\end{gather*}
By setting $\alpha =c+1/2$ and applying the duplication formula for the gamma function, we obtain

\begin{Corollary} \label{corollary7.11} 
For $\mathrm{Re}\, c> -\frac{1}{2}$, $\mathrm{Re}\, \gamma\ge 0$, we have 
\begin{gather}
 \int_{\mathbb R^n}\prod_{i=n_0+1}^n x_i^2 \prod_{n_0+1\le i<j \le n}\big(x_i^2-x_j^2\big)^2\prod _{i=1}^{n}|x_i|^{2c}
\prod_{1\le i<j \le n} \big|x_i^2-x_j^2\big|^{2\gamma}{\rm d}\mu \notag
\\ \qquad
{} = 2^{-n_1-cn}\prod_{i=1}^{n_0}\frac{\Gamma(1+i\gamma)\Gamma(1+2c+2(i-1)\gamma)}
{\Gamma(1+\gamma)\Gamma(1+c+(i-1)\gamma)}\notag
\\ \qquad\hphantom{=}
{}\times \prod_{j=1}^{n_1}
\frac{j(1+\gamma)\Gamma(j+(n_0+j)\gamma)\Gamma(1+2c+2j+2(n_0+j-1)\gamma)}
{\Gamma(2+\gamma)\Gamma(1+c+j+(n_0+j-1)\gamma)}, \label{eq7.14}
\\
 \int_{\mathbb R^n} \prod_{n_0+1\le i<j \le n}\big(x_i^2-x_j^2\big)^2\prod _{i=1}^{n}|x_i|^{2c}
\prod_{1\le i<j \le n} \big|x_i^2-x_j^2\big|^{2\gamma}{\rm d}\mu \notag
\\ \qquad
{}= 2^{-cn}\prod_{i=1}^{n_0}\frac{\Gamma(1+i\gamma)\Gamma(1+2c+2(i-1)\gamma)} {\Gamma(1+\gamma)\Gamma(1+c+(i-1)\gamma)} \notag
\\ \qquad\hphantom{=}
{}\times \prod_{j=1}^{n_1}
\frac{j(1+\gamma)\Gamma(j+(n_0+j)\gamma)\Gamma(1+2c+2(j-1)+2(n_0+j-1)\gamma)}
{\Gamma(2+\gamma)\Gamma(c+j+(n_0+j-1)\gamma)} .\label{eq7.15}
\end{gather}
\end{Corollary}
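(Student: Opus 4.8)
The plan is to obtain Corollary~\ref{corollary7.11} directly from the two Laguerre-type Mehta evaluations \eqref{eq7.10} and \eqref{eq7.11} of the preceding corollary, by the quadratic change of variables $x_i=y_i^2/2$ followed by the Legendre duplication formula for the gamma function. Since \eqref{eq7.10} and \eqref{eq7.11} already hold for all $\operatorname{Re}\alpha>0$, $\operatorname{Re}\gamma\ge 0$, no further analytic continuation is needed: the specialization $\alpha=c+\tfrac12$ is legitimate precisely on the range $\operatorname{Re}c>-\tfrac12$ asserted in the statement.

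First I would carry out the substitution $x_i=y_i^2/2$ in \eqref{eq7.10}. Under it $x_i^{\alpha-1}\,{\rm d}x_i=2^{1-\alpha}|y_i|^{2\alpha-1}\,{\rm d}y_i$, each factor $x_j$ with $j>n_0$ becomes $2^{-1}y_j^2$, each $(x_i-x_j)^2$ becomes $\tfrac14\bigl(y_i^2-y_j^2\bigr)^2$, each $|x_i-x_j|^{2\gamma}$ becomes $2^{-2\gamma}|y_i^2-y_j^2|^{2\gamma}$, and $e^{-x_i}$ becomes $e^{-y_i^2/2}$. The resulting integrand on $\mathbb R_{\ge0}^n$ is even in each $y_i$, so the integral over $\mathbb R_{\ge0}^n$ equals $2^{-n}$ times that over $\mathbb R^n$; collecting the explicit powers of $2$ produced by the Jacobian, by $x_j=2^{-1}y_j^2$, by the two difference products, and by this evenness factor reproduces exactly the first intermediate identity displayed above, with prefactor $2^{\,n_1^2+n\alpha+n(n-1)\gamma}$ in front of $\Gamma_1$. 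The same computation applied to \eqref{eq7.11} gives the second intermediate identity with prefactor $2^{\,n_1(n_1-1)+n\alpha+n(n-1)\gamma}$ in front of $\Gamma_2$.

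Next I would set $\alpha=c+\tfrac12$, so that $|x_i|^{2\alpha-1}=|x_i|^{2c}$ matches the integrand of \eqref{eq7.14} and \eqref{eq7.15}. Every $\alpha$-dependent gamma factor in $\Gamma_1$ and $\Gamma_2$ is then of the form $\Gamma\bigl(c+\tfrac12+w\bigr)$ for some shift $w$, and the duplication formula $\Gamma(z)\Gamma(z+\tfrac12)=2^{1-2z}\sqrt\pi\,\Gamma(2z)$ with $z=c+\tfrac12+w$ rewrites it as $2^{-2c-2w}\sqrt\pi\,\Gamma(1+2c+2w)/\Gamma(1+c+w)$, which is precisely the ratio of gamma functions appearing on the right-hand sides of \eqref{eq7.14} and \eqref{eq7.15}. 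The $n$ factors of $\sqrt\pi$ so produced combine with the normalization $(2\pi)^{-n/2}$ hidden in ${\rm d}\mu$ to leave only a power of $2$.

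The only real work is the bookkeeping of the powers of $2$, and this is the step I expect to be the main (purely clerical) obstacle. One must add the substitution exponent $n_1^2+n\alpha+n(n-1)\gamma$, the $-\tfrac n2$ from $(2\pi)^{-n/2}$, and the duplication contribution $2^{-2cn}\cdot 2^{-2\sum w}$, the shifts $w$ running over the $n$ gamma arguments. For \eqref{eq7.14} the $\gamma$-parts of the $w$ sum to $\binom n2$, so $2^{-n(n-1)\gamma}$ exactly cancels the $\gamma$-dependent power from the substitution, while the integer parts sum to $\binom{n_1+1}{2}=\tfrac12 n_1(n_1+1)$, contributing $2^{-n_1(n_1+1)}$; together with $n\alpha=nc+\tfrac n2$ (whose $\tfrac n2$ cancels the $-\tfrac n2$ above) this collapses to the asserted $2^{-n_1-cn}$. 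The analogous count for \eqref{eq7.15}, where the second family of shifts is $(j-1)+(n_0+j-1)\gamma$ and the integer parts sum to $\binom{n_1}{2}$ so as to cancel the $n_1(n_1-1)$ in its prefactor, yields $2^{-cn}$. Verifying that these exponent sums close up correctly — in particular that all $\gamma$-dependence in the prefactor disappears — is the delicate point, but it is entirely mechanical.
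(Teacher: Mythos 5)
Your proposal is correct and is essentially identical to the paper's own proof: the paper likewise substitutes $x_i=y_i^2/2$ into \eqref{eq7.10} and \eqref{eq7.11}, arriving at exactly your intermediate identities with prefactors $2^{\,n_1^2+n\alpha+n(n-1)\gamma}\,\Gamma_1$ and $2^{\,n_1(n_1-1)+n\alpha+n(n-1)\gamma}\,\Gamma_2$, and then sets $\alpha=c+\tfrac12$ and applies the duplication formula. Your power-of-two bookkeeping (cancellation of the $\gamma$-dependent exponents, the residual $2^{-n_1-cn}$ and $2^{-cn}$, and the absorption of the $\sqrt{\pi}$ factors into the Gaussian normalization) checks out.
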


Finally we note that the formulae~\eqref{eq7.9},~\eqref{eq7.14} and~\eqref{eq7.15} could be
recast in terms of degrees of finite reflection groups of classical type.
Let $G$ be a finite reflection groups of type $\mathrm{A}_{n-1}$,
$\mathrm{B}_n$ or $\mathrm{D}_n$ and $G_1 $ a parabolic subgroup of $G$
of the same type $\mathrm{A}_{n_1-1}$, $\mathrm{B}_{n_1}$ or $\mathrm{D}_{n_1}$.
Let $P(x)$ the product of all the normalized defining polynomials of reflecting hyperplanes:
 \begin{equation*}
 P(x) =\prod_{i=1}^N (a_{i1}x_1+\cdots+a_{in}x_n), \qquad
 \sum_{j=1}^n a_{ij}^2=2,
 \end{equation*}
 where $N$ is the number of reflecting hyperplanes of $G$. The polynomial $P_1(x)$ is defined
 for the subgroup $G_1$ similarly. Let $d_1,d_2,\dots, d_n$ be the degrees of basic invariants of
 $G$, i.e., $d_i=i$ and $ d_i=2i$, $i=1,\dots,n $ for $\mathrm{A}_{n-1}$ and $\mathrm{B}_n$ respectively,
 and $d_i=2i$, $i=1,\dots, n-1,\, d_n=n$ for $\mathrm{D}_n$ and $d_1^{(1)} ,d_2^{(1)},
 \dots, d_{n_1}^{(1)}$ be the degrees for $G_1$. Then we obtain
 \begin{Proposition} \label{proposition7.12} 
 For $ \mathrm{Re}\,\gamma \ge 0 $, we have
 \begin{equation*}
 \int_{\mathbb R^n}P_1(x)^2 |P(x)|^{2\gamma}{\rm d}\mu
=\prod_{i=1}^{n_0}\frac{\Gamma(1+d_i\gamma )}{\Gamma(1+\gamma)}
\prod_{j=1}^{n_1} \frac{j(1+\gamma)}{j+(n_0+j)\gamma}
\frac{\Gamma\big(1+d_j^{(1)}+d_{n_0+j} \gamma\big)}{\Gamma(2+\gamma)}.
\end{equation*}
\end{Proposition}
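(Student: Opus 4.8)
The plan is to verify the identity separately for the three admissible families $\mathrm A_{n-1}$, $\mathrm B_n$, $\mathrm D_n$, since these are precisely the types for which $P_1(x)^2|P(x)|^{2\gamma}$ reduces to the integrand of one of the Mehta-type evaluations already obtained in~\eqref{eq7.9}, \eqref{eq7.14} and~\eqref{eq7.15}. First I would read off $P$ and $P_1$ from the reflecting hyperplanes: for $\mathrm A_{n-1}$ the hyperplanes are $x_i=x_j$, so $P(x)=\prod_{i<j}(x_i-x_j)$; for $\mathrm D_n$ they are $x_i=\pm x_j$, so $P(x)=\prod_{i<j}(x_i^2-x_j^2)$; and $\mathrm B_n$ has in addition the hyperplanes $x_i=0$, whose normalized linear form (of squared norm $2$) is $\sqrt2\,x_i$, giving $P(x)=2^{n/2}\prod_i x_i\prod_{i<j}(x_i^2-x_j^2)$. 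Since $G_1$ acts on the last $n_1$ coordinates, $P_1(x)$ is the analogous product over the indices $n_0+1,\dots,n$.

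With these expressions the integrand matches a known one up to an explicit power of $2$: type $\mathrm A$ gives the integrand of~\eqref{eq7.9} verbatim; type $\mathrm B$ gives that of~\eqref{eq7.14} with $c=\gamma$, the normalization producing a factor $2^{\,n_1+n\gamma}$ that cancels the $2^{-n_1-cn}$ on the right of~\eqref{eq7.14}; and type $\mathrm D$ gives that of~\eqref{eq7.15} with $c=0$ and no power of $2$. In each case I would then substitute the degrees ($d_i=i$; $d_i=2i$; or the $\mathrm D_n$ values $d_i=2i$ for $i<n$ and $d_n=n$) and collapse the right-hand side to the uniform product using only $\Gamma(z+1)=z\Gamma(z)$. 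For types $\mathrm A$ and $\mathrm B$ this is immediate and uses the functional equation once per factor: in type $\mathrm A$ one has $\Gamma(1+j+(n_0+j)\gamma)=(j+(n_0+j)\gamma)\,\Gamma(j+(n_0+j)\gamma)$, which turns the claimed factor into that of~\eqref{eq7.9}; in type $\mathrm B$ the evaluation~\eqref{eq7.14} at $c=\gamma$ already carries an auxiliary ratio $\Gamma(j+(n_0+j)\gamma)/\Gamma(1+j+(n_0+j)\gamma)=1/(j+(n_0+j)\gamma)$ that supplies exactly the denominator of the uniform formula, the common factor $\Gamma(1+2j+2(n_0+j)\gamma)$ cancelling on both sides; in both types the $i$-products simplify factor by factor, with no exceptional degree.

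The real obstacle is type $\mathrm D$, where the anomalous last degree $d_n=n$ (not $2n$) breaks the pattern, so that the right-hand side of~\eqref{eq7.15} at $c=0$ does not visibly factor into the claimed shape. Here I would strip off the common prefactors $\prod_i\Gamma(1+\gamma)^{-1}$ and $\prod_j j(1+\gamma)\Gamma(2+\gamma)^{-1}$ and reduce to a pure $\Gamma$-identity $L=R$ between the residual products. The $i$-product $\prod_{i=1}^{n_0}\Gamma(1+i\gamma)/\Gamma(1+(i-1)\gamma)$ telescopes to $\Gamma(1+n_0\gamma)$; the factors $\Gamma(2j-1+2(n_0+j-1)\gamma)$ of~\eqref{eq7.15} coincide, under the shift $j\mapsto j+1$, with the numerator factors $\Gamma(1+2j+2(n_0+j)\gamma)$ of $R$; and, writing $u_j:=j+(n_0+j)\gamma$, the denominator argument in $\prod_{j=1}^{n_1}\Gamma(u_j)/\Gamma(j+(n_0+j-1)\gamma)$ equals $u_{j-1}+1$, so that $\Gamma(j+(n_0+j-1)\gamma)=u_{j-1}\Gamma(u_{j-1})$ and the quotient telescopes to $\Gamma(u_{n_1})/\bigl(\Gamma(1+n_0\gamma)\prod_{j=1}^{n_1-1}u_j\bigr)$ with $u_{n_1}=n_1+n\gamma$. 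The single anomalous factor $\Gamma(1+n_1+n\gamma)/(n_1+n\gamma)=\Gamma(n_1+n\gamma)=\Gamma(u_{n_1})$ on the $R$ side is then exactly what is needed, and $L=R$ follows. I expect this telescoping bookkeeping, and in particular the clean emergence of the exceptional $\mathrm D_n$ degree from $u_{n_1}$, to be the only genuine computation; everything else is substitution into formulas already proved.
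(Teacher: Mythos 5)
Your proposal is correct and follows exactly the paper's own route: the paper's proof simply says the $(\mathrm{A}_{n-1},\mathrm{A}_{n_1-1})$ case is immediate from~\eqref{eq7.9}, while the $(\mathrm{B}_n,\mathrm{B}_{n_1})$ and $(\mathrm{D}_n,\mathrm{D}_{n_1})$ cases follow from~\eqref{eq7.14} and~\eqref{eq7.15} by setting $c=\gamma$ and $c=0$ respectively. Your write-up just makes explicit the $\Gamma$-function bookkeeping (normalization powers of $2$, the telescoping, and the exceptional degree $d_n=n$ in type $\mathrm D$) that the paper leaves to the reader, and those computations check out.
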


\begin{proof}
The $(\mathrm{A}_{n-1},\mathrm{A}_{n_1-1})$ case is immediate from~\eqref{eq7.9}.
The cases $(\mathrm{B}_n,\mathrm{B}_{n_1})$ and
$(\mathrm{D}_n,\mathrm{D}_{n_1})$ follow from~\eqref{eq7.14} and~\eqref{eq7.15} by
setting $c=\gamma$ and $c=0$ respectively.
\end{proof}
\begin{Remark} \label{remark7.13} 
It would be interesting to see if Proposition~\ref{proposition7.12} holds for other pairs
of finite reflection groups and their parabolic subgroups. At present we have no results in this direction.
\end{Remark}

\subsection*{Acknowledgements}
The author is very grateful to the referees and the editor for many valuable comments and suggestions.

\pdfbookmark[1]{References}{ref}
\LastPageEnding

\end{document}